\def\rank{{\textrm{rank}}}
\def\Aut{{\rm Aut}}
\def\End{{\rm End}}
\def\rank{{\rm rank}}
\def\Tr{{\rm Tr}}
\theoremstyle{plain}
\newtheorem{introtheorem}{Theorem}
\newtheorem*{introremark}{Remark}
\newtheorem{theorem}{Theorem}[section]
\newtheorem{proposition/example}[theorem]{Proposition/Example}
\newtheorem{proposition}[theorem]{Proposition}
\newtheorem{corollary}[theorem]{Corollary}
\newtheorem{lemma}[theorem]{Lemma}
\theoremstyle{definition}
\newtheorem{definition}[theorem]{Definition}
\newtheorem{remark}[theorem]{Remark}
\newtheorem{conjecture/question}[theorem]{Conjecture/Question}
\newtheorem{remark/definition}[theorem]{Remark/Definition}
\newtheorem{definition/notation}[theorem]{Definition/Notation}
\numberwithin{equation}{section}
\begin{document}
\title{\textbf{Simpson--Mochizuki Correspondence for $\lambda$-Flat Bundles}}

\author{Zhi Hu}

\address{ \textsc{School of Mathematics and Statistics, Nanjing University of Science and Technology, Nanjing 210094, P.R. China}
\endgraf \textsc{Department of Mathematics, Mainz University, 55128 Mainz, Germany}}

\email{halfask@mail.ustc.edu.cn; huz@uni-mainz.de}

\author{Pengfei Huang}

\address{\textsc{School of Mathematical Sciences, University of Science and Technology of China, Hefei 230026, P.R. China}\endgraf \textsc{Laboratoire J.A. Dieudonn\'e, Universit\'e C\^ote d'Azur, CNRS, 06108 Nice, France}\endgraf \textsc{Mathematisches Institut, Ruprecht-Karls Universit\"at Heidelberg, Im Neuenheimer Feld 205, 69120 Heidelberg, Germany}}

\email{pfhwangmath@gmail.com}

\subjclass[2010]{14D20, 14J60, 32G13,  53C07}
\keywords{$\lambda$-Flat Bundles, (Pluri-)harmonic Metrics, Simpson--Mochizuki Correspondence, Moduli Spaces, Dynamical System}
\date{}
\maketitle

\begin{abstract}
The notion of flat $\lambda$-connections as the interpolation of usual flat connections and Higgs fields was suggested by Deligne and  further  studied by Simpson. Mochizuki established  the Kobayashi--Hitchin-type theorem for $\lambda$-flat bundles ($\lambda\neq 0$), which is called the Mochizuki correspondence. In this paper, on the one hand, we generalize Mochizuki's result to the case when the base  being a compact balanced manifold, more precisely, we prove the existence of harmonic metrics on stable  $\lambda$-flat bundles ($\lambda\neq 0$). On the other hand, we study two applications of the Simpson--Mochizuki correspondence to moduli spaces. More concretely, we show this correspondence provides a homeomorphism between the moduli space of  (semi)stable $\lambda$-flat bundles over a complex projective manifold and the  Dolbeault moduli space, and also  provides dynamical systems with two parameters on the latter moduli space. We investigate such dynamical systems, in particular, we calculate the  first variation, the fixed points and discuss the asymptotic behaviour.
\end{abstract}

\renewcommand\abstractname{R\'esum\'e}
\begin{abstract}
La notion de $\lambda$-connexions plates comme interpolation de connexions plates habituelles et champs de Higgs a \'et\'e sugg\'er\'ee par Deligne et étudiée plus en d\'etail par Simpson. Mochizuki a \'etabli le th\'eor\`eme de type Kobayashi--Hitchin pour les fibr\'es $\lambda$-plats ($\lambda\neq0$), qui s'appelle la correspondence de Mochizuki. Dans cet article, d'une part, nous g\'en\'eralisons le r\'esultat de Mochizuki au cas o\`u la vari\'et\'e de base est une vari\'et\'e \'equilibr\'ee, plus pr\'ecis\'ement, nous prouvons l'existence de m\'etriques de harmoniques sur les fibr\'es $\lambda$-plats stables ($\lambda\neq0$). D'autre part, nous étudions deux applications de la correspondance de Simpson--Mochizuki aux espaces de modules. Plus concr\`etement, nous montrons que cette correspondance fournit un hom\'eomorphisme entre l'espace des modules des fibr\'es $\lambda$-plats (semi)stables sur une vari\'et\'e projective complexe et l'espace des modules de Dolbeault, et fournit \'egalement des systèmes dynamiques avec deux param\'etres sur ce dernier espace des modules. Nous \'etudions de tels syst\`emes dynamiques, en particulier, nous calculons la premi\`ere variation, les points fixes et discutons le comportement asymptotique.
\end{abstract}

\tableofcontents

\section{Introduction}

The notion of flat $\lambda$-connections as the interpolation of usual flat connections and Higgs fields was suggested by Deligne \cite{Del}, illustrated by Simpson in \cite{CS7} and further  studied in \cite{CS8,CS9}.
By applying Simpson's construction for the moduli space of $\Lambda$-modules \cite{CS5}, one can show the existence of the coarse moduli space of rank $r$ semistable $\lambda$-flat bundles with vanishing Chern classes over a complex projective manifold $X$, which is denoted by $\mathbb{M}_{\mathrm{Hod}}(X,r)$. And this construction can be generalized to  the case of principal bundles  by applying the Tannakian considerations \cite{CS7}. It is clear that $\mathbb{M}_{\mathrm{Hod}}(X,r)$ has a fibration over $\mathbb{C}$, in particular,  the fiber over $\lambda=0$ is the usual Dolbeault moduli space $\mathbb{M}_{\rm Dol}(X,r)$, and over $\lambda=1$ it is the usual de Rham moduli space $\mathbb{M}_{\rm dR}(X,r)$. Deligne's motivation is to understand Hitchin's twistor construction for the moduli space of solutions to Hitchin's self-duality equations which carries a hyperK\"{a}hler structure \cite{HKLR}. More precisely, according to Deligne's perspective, Hitchin's twistor space can be treated as the gluing of the  moduli space $\mathbb{M}_{\mathrm{Hod}}(X,r)$ and the complex conjugate moduli space $\mathbb{M}_{\mathrm{Hod}}(\bar{X},r)$ by the Riemann--Hilbert correspondence. Simpson interpreted the  moduli space $\mathbb{M}_{\mathrm{Hod}}(X,r)$ as the  Hodge filtration on the non-abelian de Rham cohomology $\mathbb{M}_{\rm dR}(X,r)$, and showed the Griffiths transversality and the regularity of the Gauss--Manin connection for this filtration \cite{CS7}. Since then, this notion attracts many researchers' attention, for example, flat $\lambda$-connections play a role in compactifying the de Rham moduli spaces \cite{CS7,IIS}; the author of \cite{Ari} used spectral curves to describe $\lambda$-connections that are formal deformations of Higgs bundles; and recently the authors of \cite{LRT} applied flat $\lambda$-connections to study the Kapustin--Witten equations.

The non-abelian Hodge correspondence provides  a homeomorphism $\mathbb{M}_{\rm Dol}(X,r)\simeq \mathbb{M}_{\rm dR}(X,r)$, which is a $C^\infty$-isomorphism over the smooth loci \cite{CS5}. This homeomorphism is achieved by finding the pluri-harmonic metrics, that is, by constructing the category of harmonic bundles in order to connect the Dolbeault side and the de Rham side. When $X$ is a compact K\"ahler manifold, such metrics exist for semisimple flat bundles due to Donaldson \cite{Don} and Corlette \cite{Cor}, and for polystable Higgs bundles with vanishing Chern classes due to Hitchin \cite{Hit} and Simpson \cite{CS1}. For $\lambda\neq0$, Mochizuki introduced the notion of pluri-harmonic metrics for  $\lambda$-flat bundles, and also established  the Kobayashi--Hitchin-type theorem for this case \cite{TM2}. We call this remarkable theorem the \emph{Mochizuki correspondence}. Moreover, together with  the Kobayashi--Hitchin correspondence for Higgs bundles, it is unified into the so called \emph{Simpson--Mochizuki correspondence}, which indicates the existence of pluri-harmonic metrics on $\lambda$-flat bundles  satisfying certain stability conditions for any $\lambda\in\mathbb{C}$.  By this correspondence, one can relate the category (moduli stack, moduli space) of polystable $\lambda$-flat bundles and that of polystable Higgs bundles.

\begin{introremark}
When $\lambda\neq 0$, by multiplying with $\lambda^{-1}$ reduces a stable $\lambda$-flat bundle  $(E, D^\lambda)$ $($see Definition \ref{def2.1}$)$ of rank $r$ to a usual flat bundle $(E, \lambda^{-1}D^\lambda)$, then by  Corlette's work, we also have a pluri-harmonic metric. However, this metric is very different from the metric given by the Mochizuki correspondence. For example, given a metric on $(E, \lambda^{-1}D^\lambda)$, there is a $\rho$-equivariant map $f:\tilde X\rightarrow \mathrm{GL}(r,\mathbb{C})/U(r)$, where $\tilde X$ is the universal cover of $X$, here $\rho:\pi_1(X)\rightarrow \mathrm{GL}(r,\mathbb{C})$ is a simple representation of the fundamental group $\pi_1(X)$ associated to $(E, \lambda^{-1}D^\lambda)$. Then in general, the energy $K_f=\int_X|df|^2d\nu_X$, where $d\nu_X$ is the volume element of the K\"ahler metric on $X$, corresponding to Corlette's metric is smaller than that to Mochizuki's metric \cite{Don,Cor,CS3}.
 In our opinion, the  Simpson--Mochizuki correspondence exhibits more natural interpolation between $\lambda=1$ and $\lambda=0$, for example, for a given polystable $\lambda_0$-flat bundle, we have a family\footnote{Such family is called a \emph{twistor line} or a \emph{preferred section} under the context of twistor theory \cite{CS7}.} of polystable $\lambda$-flat bundles (varying $\lambda$) such that they correspond to the same Higgs bundle  whenever $\lambda_0\in \mathbb{C}$.
\end{introremark}

This paper is a study of the Simpson--Mochizuki correspondence. It is organized as the follows. 

In Section \ref{sec2}, as a preliminary, we collect some basic materials, simple conclusions, and provide  an explicit example.

In Section \ref{sec3}, we discuss the Simpson--Mochizuki correspondence at various levels, including the Kobayashi--Hitchin version, categorical version, and moduli version. In particular, following Simpson's ideas in \cite{CS6,CS9}, we show the following theorem.

\begin{introtheorem}[Corollary \ref{homeo}]
Let $X$ be a complex projective manifold, and let $\mathbb{M}_{\mathrm{Hod}}^\lambda(X,r)$ be the fiber of the fibration $\mathbb{M}_{\mathrm{Hod}}(X,r)\to\mathbb{C}$ over $\lambda\in\mathbb{C}$, then the Simpson--Mochizuki correspondence provides a homeomorphism
$$
\mathbb{M}_{\mathrm{Hod}}^\lambda(X,r)\simeq \mathbb{M}_{\mathrm{Dol}}(X,r).
$$
\end{introtheorem}

In Section \ref{sec4}, we consider the Mochizuki correspondence under a more general framework. Our generalization includes two aspects.
\begin{itemize}
\item[$\bullet$] Firstly, the base manifold $X$  is relaxed to be a compact \emph{balanced manifold}, that is, the associated fundamental $(1,1)$-form $\omega$ satisfies the condition $d(\omega^{\dim_\mathbb{C}X-1})=0$. Obviously, this condition is weaker than the K\"ahler  condition $d\omega=0$, but stronger than the Gauduchon condition $\partial\bar{\partial}(\omega^{\dim_\mathbb{C}X-1})=0$. 
\item[$\bullet$] Secondly, the pluri-harmonicity condition for the Hermitian metrics on  $\lambda$-flat bundles is replaced by the harmonicity condition (see Definition \ref{def2}). Obviously, the latter one is weaker in general. Of course, when $X$ is exactly a K\"{a}hler manifold and $\lambda\neq 0$, then these two conditions are fully equivalent (see Proposition \ref{kk}). 
\end{itemize}

Via the standard method of continuity, we show the following theorem.

\begin{introtheorem}[Theorem \ref{pluri-har}]
Let $X$ be a compact balanced manifold, and  $((E,\bar\partial_E),{D}^\lambda)$ be a stable $\lambda$-flat bundle over $X$ ($\lambda\neq 0$), then there is a unique harmonic metric on $((E,\bar\partial_E),{D}^\lambda)$ up to constant scalars.
\end{introtheorem}

\begin{introremark}
It is known that when $\lambda=0$, the Kobayashi--Hitchin problem (i.e. the existence of harmonic metrics on stable Higgs bundles with vanishing the first Chern class) can be  solved for Gauduchon manifolds \cite{LT}. However, the condition of the flatness of $\lambda$-connection ($\lambda\neq0$) is a more rigid constraint than that of Higgs field, so generally one cannot expect the above theorem to be true for Gauduchon manifolds as the case of Higgs bundles.
\end{introremark}

The last section, i.e. Section \ref{sec5}, is devoted to an application of the Simpson--Mochizuki correspondence to Dolbeault moduli spaces. More concretely, combining the Simpson--Mochizuki correspondence and $\mathbb{C}^*$-actions on Hodge moduli spaces together, we construct dynamical systems with two parameters on Dolbeault moduli spaces. Here a dynamical system means a continuous  self-map $\psi_{(\lambda,t)}:\mathbb{M}_{\mathrm{Dol}}(X,r)\rightarrow \mathbb{M}_{\mathrm{Dol}}(X,r) $ with a pair $(\lambda,t)\in\mathbb{C}\times\mathbb{C}^*$ of parameters. We first study the local property of $\psi_{(\lambda,t)}$ by calculating the first variation. Next we consider the fixed points of this map. 

For a given Higgs bundle $u:=((E,\bar\partial_E),\theta)\in \mathbb{M}_{\mathrm{Dol}}(X,r)$, we define the set of stable parameters
$$
\mathcal{C}_u=\{(\lambda,t)\in \mathbb{C}\times\mathbb{C}^*: \psi_{(\lambda,t)}(u)=u\},
$$
and for a given pair $(\lambda,t)\in\mathbb{C}\times\mathbb{C}^*$ of parameters, we define the set of fixed points
$$
\mathfrak{Fix}_{(\lambda,t)}=\{u\in \mathbb{M}_{\mathrm{Dol}}(X,r): \psi_{(\lambda,t)}(u)=u\}.
$$
Our main results on this topic can be summarized as follows:

\begin{introtheorem}[Theorem \ref{9}, Corollary \ref{coro4}, Theorem \ref{cv}]\
\begin{enumerate}
  \item Let $X$ be a  Riemann surface, and let $u\in  \mathbb{M}_{\mathrm{Dol}}(X,r)$ represents a decoupled Higgs bundle with nontrivial Higgs field, then
$\mathbb{C}\times\{\mu_l^m, m=0,\cdots,l-1\}\subseteq \mathcal{C}_u\subseteq (\mathbb{C}\times\{\mu_l^m, m=0,\cdots,l-1\})\bigcup\{(\lambda,t)\in \mathbb{C}^*\times\mathbb{C}^*:|t||\lambda|^2=1,|t|\neq1, t=|t|\mu_{l'}^k, k=1,\cdots,l'-1\}$, where $\mu_l=e^{\frac{2\pi i}{l}}, \mu_{l'}=e^{\frac{2\pi i}{l'}}$ for some fixed positive integers $1\leq l\leq r, 2\leq l'\leq r$.
In particular, if the Higgs field satisfies $\Tr(\theta)\neq0$ at some point $x\in X$, then $\mathcal{C}_u=\mathbb{C}\times \{1\}$.
  \item Let $\mathfrak{Fix}=\bigcap\limits_{(\lambda,t)\in \mathbb{C}^*\times \mathbb{C}^*}\mathfrak{Fix}_{(\lambda,t)}$. Then $\mathfrak{Fix}$ consists of the set of complex variations of Hodge structure\footnote{In this paper, we agree with the terminology of \cite{CW}, namely a complex variation of Hodge structure means a (polystable) system of Hodge bundles in the sense of Simpson's paper \cite{CS4}.}.
\end{enumerate}

\end{introtheorem}

Finally, to investigate the limiting behaviour of this dynamical system  when the parameters tend to 0, we introduce the following five limits of a Higgs bundle $((E,\bar\partial_E),\theta)\in \mathbb{M}_{\mathrm{Dol}}(X,r)$ (now $X$ is a Riemann surface):
\begin{enumerate}
\item[(1)] $\psi_{\underline{(0,0)}}((E,\bar\partial_E),\theta)
   :=\lim\limits_{t\rightarrow 0}\psi_{(0,t)}((E,\bar\partial_E),\theta),$
   \smallskip
   \item[(2)] $\psi^{\underline{(0,0)}}((E,\bar\partial_E),\theta)
   :=\lim\limits_{t\rightarrow 0}\lim\limits_{\lambda\rightarrow 0}\psi_{(\lambda,t)}((E,\bar\partial_E),\theta),$
   \smallskip
   \item[(3)] $\psi_{\overline{(0,0)}}((E,\bar\partial_E),\theta)
   :=\lim\limits_{\lambda\rightarrow 0}\psi_{(\lambda,0)}((E,\bar\partial_E),\theta)$,
   \smallskip
   \item[(4)] $\psi^{\overline{(0,0)}}((E,\bar\partial_E),\theta)
   :=\lim\limits_{\lambda\rightarrow 0}\lim\limits_{t\rightarrow 0}\psi_{(\lambda,t)}((E,\bar\partial_E),\theta),$
   \smallskip
   \item[(5)] $\psi_{(0,0)}((E,\bar\partial_E),\theta)
   :=\lim\limits_{(\lambda,t)\rightarrow (0,0)}\psi_{(\lambda,t)}((E,\bar\partial_E),\theta),$
\end{enumerate}
where $\psi_{(\lambda,0)}$ appeared in the third limit is defined by the Simpson filtration that is closely related to the limits of $\mathbb{C}^*$-action on $\mathbb{M}_{\mathrm{Hod}}(X,r)$. For a general Higgs bundle, it's quite hard to explicitly describe these limits, we do not even know whether they exist. However, if these limits exist, all are the complex variations of Hodge structure.  For some special cases, we discuss these limits.

\begin{introtheorem}[Theorem \ref{nb}]
Let $X$ be a Riemann surface.
\begin{enumerate}
 \item If $((E,\bar\partial_E),\theta))\in \mathbb{M}_{\mathrm{Dol}}(X,r)$ is a complex variation of Hodge structure or a decoupled Higgs bundle, then the above  limits exist and coincide.
\item Let $(E,\bar\partial_E),\theta))\in \mathbb{M}_{\mathrm{Dol}}(X,2)$ and  assume the maximal destabilizing subbundle of $(E,\bar\partial_E)$ is preserved by $\theta^\dagger_h$ for the pluri-harmonic metric $h$ on $((E,\bar\partial_E),\theta))$, then  the limit $ \psi_{\overline{(0,0)}}((E,\bar\partial_E),\theta)$ exists,  and it  coincides with the limit $ \psi_{\underline{(0,0)}}((E,\bar\partial_E),\theta)$.
\item Let  $(E,\bar\partial_E),\theta))\in M_{\mathrm{Dol}}(X,r)$, then the limit  $\lim\limits_{\lambda\rightarrow 0}\psi_{(\lambda,0)}((E,\bar\partial_E),\lambda\theta)$ exists, and it  coincides with the limit $ \psi_{\underline{(0,0)}}((E,\bar\partial_E),\theta)$.
\end{enumerate}

\end{introtheorem}

\noindent\textbf{Acknowledgements}. 
The author P. Huang would like to thank his thesis supervisor Prof. Carlos Simpson for the kind help and useful discussions. Both authors would like to thank Prof. Takuro Mochizuki, Prof. Kang Zuo and Dr. Ya Deng for their useful discussions on various occasions. 

 \section{Preliminaries }\label{sec2}
 
\subsection{Flat $\lambda$-Connections, Pluri-harmonic Metrics}
\begin{definition} [\cite{CS7,TM2}]\label{def2.1}
Let $X$ be a complex projective manifold and $E$ be a holomorphic vector bundle over $X$, with the underlying smooth vector bundle denoted by $\mathbb{E}$. Fix $\lambda\in\mathbb{C}$.
\begin{enumerate}
  \item A \emph{holomorphic $\lambda$-connection} on $E$ is a $\mathbb{C}$-linear map $D^\lambda: E\to E\otimes\Omega_X^{1}$ that satisfies the following $\lambda$-twisted Leibniz rule:
$$D^\lambda(fs)=fD^\lambda s+\lambda s\otimes  df,$$
where $f$ and $s$ are holomorphic sections of $\mathcal{O}_X$ and $E$, respectively. It naturally extends to a map $D^\lambda: E\otimes\Omega_X^{p}\to E\otimes\Omega_X^{p+1}$ for any integer $p\geq0$. If $D^\lambda\circ D^\lambda=0$, we call $D^\lambda$ a (holomorphic) \emph{flat $\lambda$-connection} and the pair $(E,D^\lambda)$ is called a (holomorphic) \emph{$\lambda$-flat bundle}.
  \item A \emph{$C^\infty$ $\lambda$-connection} on $\mathbb{E}$ is a $\mathbb{C}$-linear map $\mathbb{D}^\lambda: \mathbb{E}\to\mathbb{E}\otimes T^*X$ that satisfies the following $\lambda$-twisted Leibniz rule:
$$\mathbb{D}^\lambda(fs)=f\mathbb{D}^\lambda s+\lambda s\otimes\partial f+s\otimes\bar{\partial}f,$$
where $f$ is a smooth function on $X$ and $s$ is a smooth section of $\mathbb{E}$. It naturally extends to a map $\mathbb{D}^\lambda: \mathbb{E}\otimes\Lambda^r(T^*X)\to\mathbb{E}\otimes\Lambda^{r+1}(T^*X)$ for any integer $r\geq0$. If $\mathbb{D}^\lambda\circ\mathbb{D}^\lambda=0$, we call $\mathbb{D}^\lambda$ a  ($C^\infty$) \emph{flat $\lambda$-connection}, and the pair $(\mathbb{E},\mathbb{D}^\lambda)$ is called a  ($C^\infty$) \emph{$\lambda$-flat bundle}.
\end{enumerate}
\end{definition}

\begin{remark}
Obviously, when $\lambda=1$ and $0$, then above definition reduces to that of a usual flat connection and Higgs field, respectively.
Giving a holomorphic flat $\lambda$-connection $D^\lambda$ on $E$ is equivalent to giving  a $C^\infty$ flat $\lambda$-connection $\mathbb{D}^\lambda$ on $\mathbb{E}$.
For simplicity,  we do not distinguish  $E$  and  $\mathbb{E}$ when there is no ambiguity, and for a $\lambda$-flat bundle, we have various notations such as $(E, D^\lambda), (E,\mathbb{D}^\lambda), ((E,\bar\partial_E),D^\lambda)$ or $ ((E,d^{\prime\prime}_E),d^\prime_E),$ depending on the different contexts. Additionally,  above notions can also work for the category of coherent sheaves, i.e.  $\lambda$-flat bundles can be generalized to \emph{$\lambda$-flat coherent sheaves} without any difficulty.
\end{remark}

Now we consider the $\lambda$-connections in the $C^\infty$-category for   more general base manifold $X$, namely we assume $X$ is a compact balanced manifold.
Fixing  $\lambda\in \mathbb{C}$,  let $(E,\mathbb{D}^\lambda)$ be a $\lambda$-flat bundle over $X$,  and let $h$ be a Hermitian metric on $E$. We decompose $\mathbb{D}^\lambda$ into its (1,0)-part $d_E'$ and (0,1)-part $d_E''$ that
 defines a holomorphic structure on $E$.
From $h$ and $d_E'$, we have a (0,1)-operator $\delta_h''$ determined by the condition $\lambda\partial h(u,v)=h(d_E'u,v)+h(u,\delta_h''v)$,
similarly, $h$ and $d_E''$ provides  a (1,0)-operator $\delta_h'$ via the  condition
$\bar{\partial}h(u,v)=h(d_E''u,v)+h(u,\delta_h'v)$.
One easily checks that
$\delta_h'(fv)=f\delta'v+v\otimes\partial f$, and
$\delta_h''(fv)=f\delta_h''v+\bar{\lambda}v\otimes\bar{\partial}f$.
We introduce  the following four operators
\begin{equation}
\begin{aligned}
\partial_h&:=\frac{1}{1+|\lambda|^2}\bigg(\bar{\lambda}d_E'+\delta_h'\bigg),\quad
\bar{\partial}_h:=\frac{1}{1+|\lambda|^2}\bigg(d_E''+\lambda\delta_h''\bigg),\\
\theta_h&:=\frac{1}{1+|\lambda|^2}\bigg(d_E'-\lambda\delta_h'\bigg),\quad
\theta_h^\dagger:=\frac{1}{1+|\lambda|^2}\bigg(\bar{\lambda}d_E''-\delta_h''\bigg).
\end{aligned}
\end{equation}
They satisfy
\begin{equation}
\begin{aligned}
d_E'&=\lambda\partial_h+\theta_h,\quad
d_E''=\bar{\partial}_h+\lambda\theta_h^\dagger,\\
\delta_h'&=\partial_h-\bar{\lambda}\theta_h,\quad
\delta_h''=\bar{\lambda}\bar{\partial}_h-\theta_h^\dagger.
\end{aligned}
\end{equation}
Now  $\partial_h$ and $\bar{\partial}_h$ obey  the usual Leibniz rule,
 $\theta_h\in C^\infty(X,\Omega_X^{1,0}\otimes\End(E))$ and $\theta_h^\dagger\in C^\infty(X,\Omega_X^{0,1}\otimes\End(E))$. Moreover, it's easy to check that $\mathcal{D}_h :=\partial_h+\bar{\partial}_h$, $d_E''+\delta_h'$ and $\lambda^{-1}d_E'+\bar{\lambda}^{-1}\delta_h''$ $(\lambda\neq0)$ are  unitary connections with respect to the metric $h$, and $\theta_h^\dagger$ is the adjoint of $\theta_h$ in the sense that
$$
h(\theta_h(u),v)=h(u,\theta_h^\dagger(v)).
$$ 
We also introduce the operators ${\mathbb{D}_h^\lambda}^\star=\delta_h'-\delta_h''$ and $G(h,\mathbb{D}^\lambda )=[\mathbb{D}^\lambda,{\mathbb{D}_h^\lambda}^\star]$, the latter one is called the \emph{pseudo-curvature}.

\begin{definition}\label{def2}
The Hermitian metric $h$  on a $\lambda$-flat bundle $(E,\mathbb{D}^\lambda)$ is called
\begin{enumerate}
  \item a \emph{harmonic metric} if $\Lambda_\omega G(h,\mathbb{D}^\lambda)=0$, where  $\Lambda_\omega$ stands for the contraction by $\omega$,
  \item a \emph{pluri-harmonic metric} if $G(h,\mathbb{D}^\lambda)=0$.
\end{enumerate}
\end{definition}

\begin{proposition}[\textbf{K\"{a}hler Identities of Flat $\lambda$-Connections}, \cite{TM2}]\label{ki}
Let $(X,\omega)$ be a  compact K\"{a}hler manifold, then we have
\begin{align*}
 (\mathbb{D}^\lambda)^*_{h}&=-\sqrt{-1}[\Lambda_\omega,{\mathbb{D}^\lambda_{h}}^\star],\\
 ({\mathbb{D}^\lambda_{h}}^\star)^*_{h}&=\sqrt{-1}[\Lambda_\omega,{\mathbb{D}^\lambda}].
\end{align*}

\end{proposition}

The following property says, for a Hermitian metric on a $\lambda$-flat bundle ($\lambda\neq0$) over a compact K\"ahler manifold, it is a pluri-harmonic metric if and only if it is  a harmonic metric, or if and only if the $(1,1)$-part of its pseudo-curvature vanishes.

\begin{proposition}\label{kk}
 Let $\lambda\neq 0$, and let $(X,\omega)$ be a  compact K\"{a}hler manifold, then all the following  conditions are equivalent:
 \begin{enumerate}
\item    $G(h,\mathbb{D}^\lambda )=0$,
\item $\Lambda_\omega G(h,\mathbb{D}^\lambda )=0$,
\item $(\bar\partial_h+\theta_h)^2=0$,
\item $(\partial_h+\theta_h^\dagger)^2=0$,
\item $\widetilde{\bar\partial_h}\theta_h=0$\footnote{Here we add the notation $\ \tilde{}\ $ to indicate the induced operator on $\End(E)\otimes \Omega^{\bullet,\bullet}_X$ from the operator on $E\otimes \Omega^{\bullet,\bullet}_X$.} and $\theta_h^2=0$,
\item $\widetilde{\partial_h}\theta^\dagger_h=0$ and $(\theta_h^\dagger)^2=0$,
\item $\widetilde{\bar\partial_h}\theta_h=0$,
\item $\widetilde{\partial_h}\theta^\dagger_h=0$,
\item $\Lambda_\omega\widetilde{\bar\partial_h}\theta_h=0$,
\item $\Lambda_\omega\widetilde{\partial_h}\theta^\dagger_h=0$.
\end{enumerate}
\end{proposition}

\begin{proof}
We only give the sketch of the proof of  $(1)\Leftrightarrow (2)$, namely $h$ is a pluri-harmonic metric if and only if it is a harmonic metric,  more details can be found in the second named author's thesis \cite{Hua}. The equivalence of (1), (3), (4), (5), (6) has been shown in \cite{TM2}. And the equivalence of (5), (6), (7), (8) is recently proved by  Mochizuki in \cite{TM4}. By the flatness of $\mathbb{D}^\lambda$, we have $({\mathbb{D}_h^\lambda}^\star)^2=0$, which yields the following   Bianchi identities
$$
\widetilde{\mathbb{D}^\lambda} G(h,\mathbb{D}^\lambda )=\widetilde{{\mathbb{D}_h^\lambda}^\star}G(h,\mathbb{D}^\lambda )=0.
$$
Therefore, it follows from the identity
$$
G(h,\mathbb{D}^\lambda )=\frac{\lambda}{1+|\lambda|^2}\widetilde{\mathbb{D}^\lambda}(\lambda^{-1}\theta_h-\theta_h^\dagger)
$$
and the assumption $\Lambda_\omega G(h,\mathbb{D}^\lambda )=0$ that
\begin{align*}
  \int_X\langle G(h,\mathbb{D}^\lambda ),G(h,\mathbb{D}^\lambda )\rangle_{h,\omega}\omega^n= \frac{\lambda}{1+|\lambda|^2}\int_X\langle \lambda^{-1}\theta_h-\theta_h^\dagger,(\widetilde{\mathbb{D}^\lambda})^*G(h,\mathbb{D}^\lambda )\rangle_{h,\omega}\omega^n=0,
\end{align*}
thus $G(h,\mathbb{D}^\lambda )=0$.
\end{proof}

\begin{remark}
Very recently, the authors of \cite{ChWe} introduced $n$-dimensional balanced manifolds of Hodge--Riemann type, namely imposing a further condition
$$
\frac{\omega^{n-1}}{(n-1)!}=\omega_0\wedge\Omega_0
$$
for certain real $(1,1)$-form $\omega_0$ and $(n-2,n-2)$-form $\Omega_0$ satisfying the Hodge--Riemann bilinear relation. For such special balanced manifolds, the above proposition still holds (cf. \cite[Proposition 2.15]{TM4} and \cite[Theorem 5.1]{ChWe}).
\end{remark}

\begin{proposition}\label{w}
Let $\lambda\neq 0$, and let $(E,\mathbb{D}^\lambda)$ be a $\lambda$-flat bundle over a  Riemann surface $(X,\omega)$ together with a Hermitian metric $h$, then
\begin{enumerate}
  \item for any local  $\mathbb{D}^\lambda$-flat section $s$ of $E$, we have
$$
\Delta_\omega|s|_h^2\geq-\frac{2}{1+|\lambda|^2}|s|^2_h|\Lambda_\omega G(h,\mathbb{D}^\lambda) |_h,
$$
where $\Delta_\omega$ denotes the usual Laplacian on $(X,\omega)$.
  \item for any local nowhere-vanishing $\mathbb{D}^\lambda$-flat section $s$ of $E$, we have
$$
\Delta_\omega\log(|s|_h^2)\geq -\frac{2}{1+|\lambda|^2}|\Lambda_\omega G(h,\mathbb{D}^\lambda)|_h.
$$
\end{enumerate}

\end{proposition}
\begin{proof} 
(1) Let  $s$ be a local  $\mathbb{D}^\lambda$-flat section, namely
 we have
$$
\begin{aligned}
d_E's&=(\lambda\partial_h+\theta_h)s=0,\\
d_E''s&=(\bar{\partial}_h+\lambda\theta_h^\dagger)s=0,
\end{aligned}
$$
then
\begin{align*}
  \bar{\partial}h(s,s)&=h(d_E''s,s)+h(s,\delta_h's)=h(s,\delta_h's),\\
 \lambda {\partial}h(s,s)&=h(d_E's,s)+h(s,\delta_h''s)=h(s,\delta_h''s),
\end{align*}
which gives rise to
\begin{align*}
\lambda\partial\bar{\partial}h(s,s)=\lambda\partial h(s,\delta_h's)=h(d_E's,\delta_h's)+h(s,\delta_h''\delta_h's)=h(s,\delta_h''\delta_h's).
\end{align*}
By means of the following identities
\begin{align*}
h(s,\bar{\lambda}\bar{\partial}_h\partial_h(s))&=-h(s,\bar{\lambda}\bar{\partial}_h\frac{\theta_h}{\lambda}(s))=
-h(s,\frac{\bar{\lambda}}{\lambda}(\widetilde{\bar{\partial}_h}\theta_h)(s))-h(s,\bar{\lambda}\theta_h\theta_h^\dagger(s))\\
&=-\lambda|\theta_h^\dagger(s)|_h^2+\frac{|\lambda|^2}{\bar \lambda(1+|\lambda|^2)^2}h(s, G(h,\mathbb{D}^\lambda)s),\\
h(s,\theta_h^\dagger\partial_h(s))&=-h(s,\theta_h^\dagger\frac{\theta_h}{\lambda}(s))=-\frac{1}{\bar{\lambda}}|\theta_h(s)|_h^2,\\
h(s,\bar{\lambda}^2\bar{\partial}_h\theta_h(s))&=h(s,\bar{\lambda}^2(\widetilde{\bar{\partial}_h}\theta_h)(s))+h(s,(\bar{\lambda})^2\theta_h\lambda\theta_h^\dagger(s))\\
&=|\lambda|^2\lambda|\theta_h^\dagger(s)|_h^2-\frac{\lambda|\lambda|^2}{(1+|\lambda|^2)^2}h(s, G(h,\mathbb{D}^\lambda)s),
\end{align*}
 we obtain\begin{align*}
h(s,\delta_h''\delta_h'(s))&=h(s,(\bar{\lambda}\bar{\partial}_h-\theta_h^\dagger)(\partial_h-\bar{\lambda}\theta_h)(s))\\
&=h(s,\bar{\lambda}\bar{\partial}_h\partial_h(s))-h(s,\theta_h^\dagger\partial_h(s))+h(s,\bar{\lambda}\theta_h^\dagger\theta_h(s))-h(s,\bar{\lambda}^2\bar{\partial}_h\theta_h(s))\\
&=(1+|\lambda|^2)(-\lambda|\theta_h^\dagger(s)|_h^2+\frac{1}{\bar{\lambda}}|\theta_h(s)|_h^2)+\frac{\lambda}{1+|\lambda|^2}h(s, G(h,\mathbb{D}^\lambda)s).
\end{align*}
It follows that
\begin{align*}
  -\Delta_\omega|s|_h^2=&\ 2\sqrt{-1}\Lambda_\omega\partial\bar{\partial}|s|_h^2\\
  &=2\sqrt{-1}\Lambda_\omega[-(1+|\lambda|^2)|\theta_h^\dagger(s)|_h^2+\frac{1+{|\lambda|^2}}{|\lambda|^2}|\theta_h(s)|_h^2+\frac{1}{1+|\lambda|^2}h(s, G(h,\mathbb{D}^\lambda)s)]\\
  &\leq-\frac{2}{1+|\lambda|^2} h(s,\sqrt{-1} \Lambda_\omega G(h,\mathbb{D}^\lambda)s)\\
  &\leq\frac{2}{1+|\lambda|^2}|s|_h|\Lambda_\omega G(h,\mathbb{D}^\lambda) s|_h\leq\frac{\lambda}{1+|\lambda|^2}|s|^2_h|\Lambda_\omega G(h,\mathbb{D}^\lambda) |_h,
\end{align*}
where we apply the Cauchy--Schwarz inequality for the last two inequalities.

(2)
We have
\begin{align*}
\lambda\partial\bar{\partial}\log(|s|_h^2)&=\frac{\lambda\partial\bar{\partial}|s|_h^2}{|s|_h^2}-\frac{\lambda\partial|s|_h^2\wedge \bar{\partial}|s|_h^2}{|s|^4_h}\\
&=\frac{h(s,\delta_h''\delta_h's)}{|s|_h^2}-\frac{h(s,\delta_h''s)\wedge h(s,\delta_h's)}{|s|_h^4},
\end{align*}
where the first term on the right hand side of  the second equality has been calculated, and the second term can be calculated by
the identities
\begin{align*}
h(s,\delta_h'(s))&=h(s,(\partial_h-\bar{\lambda}\theta_h)(s))=h(s,(-\lambda^{-1}-\bar{\lambda})\theta_h(s))=-\frac{(1+|\lambda|^2)}{\bar{\lambda}}h(s,\theta_h(s)),\\
h(s,\delta_h''(s))&=h(s,(\bar{\lambda}\bar{\partial}_h-\theta_h^\dagger)(s))=-(1+|\lambda|^2)h(s,\theta_h^\dagger(s)).
\end{align*}
Finally, we arrive at
\begin{align*}
-\Delta_\omega\log(|s|_h^2)=&\ 2\sqrt{-1}\Lambda_\omega\partial\bar{\partial}\log(|s|_h^2)\\
=&\ 2\sqrt{-1}\Lambda_\omega[-(1+|\lambda|^2)\frac{|\theta_h^\dagger(s)|_h^2}{|s|_h^2}+\frac{1+|\lambda|^2}{|\lambda|^2}\frac{|\theta_h(s)|_h^2}{|s|_h^2}\\
&\ -\frac{(1+|\lambda|^2)^2}{|\lambda|^2}\frac{h(s,\theta_h(s))\wedge h(s,\theta_h^\dagger(s))}{|s|_h^4}+\frac{2}{1+|\lambda|^2}\frac{h(s, G(h,\mathbb{D}^\lambda)s)}{|s|^2_h}]\\
\leq&\ \frac{2}{1+|\lambda|^2}|\Lambda_\omega G(h,\mathbb{D}^\lambda) |_h.
\end{align*}
We complete the proof.
\end{proof}

\subsection{Example}
Let   $E$ be a  Hermitian vector bundle over the punctured unit disk $\bigtriangleup^*=\{z:0<|z|<1\}$ of rank 2 with the  local unitary frame $\{v_1,v_2\}$. In \cite{MSWW}, the authors introduced  the so-called ``fiducial solution" of Hitchin's equations expressed in terms of the frame $\{v_1,v_2\}$ as follows
$$
\begin{aligned}
A&=\frac{1}{8}\begin{pmatrix}1&0\\0&-1\end{pmatrix}\left(\frac{dz}{z}-\frac{d\bar{z}}{\bar{z}}\right)\\
\theta&=\begin{pmatrix}0&\sqrt{|z|}\\\frac{z}{\sqrt{|z|}}&0\end{pmatrix}dz,
\end{aligned}
$$
 that solves the decoupled Hitchin's equations
$$
F_A=0,\quad [\theta,\theta^\dagger]=0,\quad \bar{\partial}_A\theta=0,
$$
where  $F_A$ denotes  the curvature of the connection $A$, and $\theta$ is the Higgs field.  Let $\mu\in\mathbb{C}^*$ be a constant, then we have a flat $\lambda$-connection $\mathbb{D}^\lambda_\mu=d_E'+d_E''$ with
$$
d_E'=\lambda\partial_A+\theta,\ \
d_E''=\bar{\partial}_A+\mu\theta^\dagger,$$
then a $\mathbb{D}^\lambda_\mu$-flat section $s=\begin{pmatrix}
f(z,\bar{z})\\g(z,\bar{z})\end{pmatrix}$ should satisfy the following equations
\begin{equation}\label{2.2}
\left\{\begin{aligned}
\lambda\frac{\partial f}{\partial z}+\frac{\lambda}{8}\frac{f}{z}+\sqrt{|z|}g&=0,\\
\lambda\frac{\partial g}{\partial z}-\frac{\lambda}{8}\frac{g}{z}+\frac{z}{\sqrt{|z|}}f&=0,\\
\frac{\partial f}{\partial\bar{z}}-\frac{1}{8}\frac{f}{\bar{z}}+\mu\frac{\bar{z}}{\sqrt{|z|}}g&=0,\\
\frac{\partial g}{\partial\bar{z}}+\frac{1}{8}\frac{g}{\bar{z}}+\mu\sqrt{|z|}f&=0.
\end{aligned}
\right.
\end{equation}
Let $z\to0$ in equations (\ref{2.2}), we have
$$
f(z,\bar{z})\longrightarrow z^{-\frac{1}{8}}\bar{z}^{\frac{1}{8}},\ \
g(z,\bar{z})\longrightarrow z^{\frac{1}{8}}\bar{z}^{-\frac{1}{8}},
$$
hence we can assume that
$$
f(z,\bar{z})=z^{-\frac{1}{8}}\bar{z}^{\frac{1}{8}}u(z,\bar{z}),\ \
g(z,\bar{z})= z^{\frac{1}{8}}\bar{z}^{-\frac{1}{8}}v(z,\bar{z})
$$
with $\lim\limits_{z\to0}u=\lim\limits_{z\to0}v=1$.
Then $u(z,\bar{z})$ and $v(z,\bar{z})$ should satisfy the following equations
\begin{equation*}
\left\{\begin{aligned}
\lambda\frac{\partial u}{\partial z}+\sqrt{z}v=0,\\
\lambda\frac{\partial v}{\partial z}+\sqrt{z}u=0,\\
\frac{\partial u}{\partial\bar{z}}+\mu\sqrt{\bar{z}}v=0,\\
\frac{\partial v}{\partial\bar{z}}+\mu\sqrt{\bar{z}}u=0,
\end{aligned}
\right.
\end{equation*}
which imply
$$\frac{\partial u}{\partial(\frac{z^{\frac{3}{2}}}{\lambda\mu})}=\frac{\partial u}{\partial(\bar{z}^{\frac{3}{2}})},\ \ \frac{\partial v}{\partial(\frac{z^{\frac{3}{2}}}{\lambda\mu})}=\frac{\partial v}{\partial(\bar{z}^{\frac{3}{2}})}.$$
Therefore, we can write
$$
u(z,\bar{z})=U\bigg(\frac{z^{\frac{3}{2}}}{\lambda\mu}+\bar{z}^{\frac{3}{2}}\bigg),\quad  
v(z,\bar{z})=V\bigg(\frac{z^{\frac{3}{2}}}{\lambda\mu}+\bar{z}^{\frac{3}{2}}\bigg).
$$ 
Introducing the new variable  $X=\frac{z^{\frac{3}{2}}}{\lambda\mu}+\bar{z}^{\frac{3}{2}}$,  we have
$$\left\{
\begin{aligned}
\frac{3}{2\mu}\frac{\partial U}{\partial X}+V&=0,\\
\frac{3}{2\mu}\frac{\partial V}{\partial X}+U&=0,\\
\end{aligned}\right.
$$
which can be solved easily
$$\begin{aligned}
U(X)&=C_1\exp(\frac{2\mu}{3}X)+C_2\exp(-\frac{2\mu}{3}X),\\
V(X)&=-C_1\exp(\frac{2\mu}{3}X)+C_2\exp(-\frac{2\mu}{3}X),
\end{aligned}$$
where $C_1$ and $C_2$ are two constants. Consequently, any  local $\mathbb{D}^\lambda_\mu$-flat section $s$  is the  $\mathbb{C}$-linear combination of the following two sections
\begin{align*}
  s_1&=\begin{pmatrix}
z^{-\frac{1}{8}}\bar{z}^{\frac{1}{8}}\exp(\frac{2}{3\lambda}z^{\frac{3}{2}}+\frac{2\mu}{3}\bar{z}^{\frac{3}{2}})\\
-z^{\frac{1}{8}}\bar{z}^{-\frac{1}{8}}\exp(\frac{2}{3\lambda}z^{\frac{3}{2}}+\frac{2\mu}{3}\bar{z}^{\frac{3}{2}})
\end{pmatrix},\\
s_2&=\begin{pmatrix}
z^{-\frac{1}{8}}\bar{z}^{\frac{1}{8}}\exp(-\frac{2}{3\lambda}z^{\frac{3}{2}}-\frac{2\mu}{3}\bar{z}^{\frac{3}{2}})\\
z^{\frac{1}{8}}\bar{z}^{-\frac{1}{8}}\exp(-\frac{2}{3\lambda}z^{\frac{3}{2}}-\frac{2\mu}{3}\bar{z}^{\frac{3}{2}})
\end{pmatrix}.
\end{align*}
One easily checks that $\Delta\log(|s|_h^2)=0$.

 Now let $\lambda'=t\lambda$.  We want to find the pluri-harmonic metric $h_t$ for the $\lambda'$-flat bundle $(E,\mathbb{D}^{\lambda^\prime}=td^\prime_E+d^{\prime\prime}_E)$ with $\mu=\lambda$.  Denote the matrix form of $h_t$ in terms of the frame $\{v_1,v_2\}$ by $H_t$.
We  write $t\cdot d_E'=\lambda'\partial_A+t\theta,
d_E''=\bar{\partial}_A+\lambda(1-|t|^2)\theta^\dagger+\lambda'(t\theta)^\dagger$, then one can take $((E,\bar{\partial}_A+\lambda(1-|t|^2)\theta^\dagger), t\theta)$ as the Higgs bundle by requiring
$$\left\{
\begin{aligned}
\bar{H}_t^{-1}\theta^\dagger\bar{H}_t&=\theta^\dagger,\\
\partial H_t&=(A^{1,0})^T\ H_t+H_t\overline{(A^{0,1}+\lambda(1-|t|^2)\theta^\dagger)},\\
\bar{\partial}H_t&=(A^{0,1}+\lambda(1-|t|^2\theta^\dagger)^TH_t+H_t\overline{A^{1,0}}.
\end{aligned}\right.$$
 Expressing $H_t$ as
$$
H_t=\begin{pmatrix}
a(z,\bar{z})&b(z,\bar{z})\\\overline{b(z,\bar{z})}&c(z,\bar{z})
\end{pmatrix},
$$
 then we have
$$\left\{
\begin{aligned}
a&=c,\\
b\cdot\bar{z}^{\frac{1}{2}}&=\bar{b}\cdot z^{\frac{1}{2}},\\
\frac{\partial a}{\partial z}&=\bar{\lambda}(1-|t|^2)b\sqrt{|z|},\\
\frac{\partial a}{\partial\bar{z}}&=\lambda(1-|t|^2)b\frac{\bar{z}}{\sqrt{|z|}},\\
\frac{\partial b}{\partial z}&=\frac{b}{4z}+\bar{\lambda}(1-|t|^2)a\frac{z}{\sqrt{|z|}},\\
\frac{\partial b}{\partial\bar{z}}&=-\frac{b}{4\bar{z}}+\lambda(1-|t|^2)a\sqrt{|z|}.
\end{aligned}\right.$$
It can be resolved as follows
$$
\begin{aligned}
a(z,\bar{z})&=f(\bar{z})\exp\bigg(\frac{2}{3}\bar{\lambda}(1-|t|^2)z^{\frac{3}{2}}\bigg)+g(\bar{z})\exp\bigg(-\frac{2}{3}\bar{\lambda}(1-|t|^2)z^{\frac{3}{2}}\bigg),\\
b(z,\bar{z})&=\frac{z^{\frac{1}{2}}}{\sqrt{|z|}}\left(f(\bar{z})\exp\bigg(\frac{2}{3}\bar{\lambda}(1-|t|^2)z^{\frac{3}{2}}\bigg)-g(\bar{z})\exp\bigg(-\frac{2}{3}\bar{\lambda}(1-|t|^2)z^{\frac{3}{2}}\bigg)\right),
\end{aligned}$$
where
$$
\begin{aligned}
f(\bar{z})&=C_1\exp\bigg(\frac{2}{3}\lambda(1-|t|^2)\bar{z}^{\frac{3}{2}}\bigg)+C_2\exp\bigg(-\frac{2}{3}\lambda(1-|t|^2)\bar{z}^{\frac{3}{2}}\bigg),\\
g(\bar{z})&=C_2\exp\bigg(\frac{2}{3}\lambda(1-|t|^2)\bar{z}^{\frac{3}{2}}\bigg)+C_3\exp\bigg(-\frac{2}{3}\lambda(1-|t|^2)\bar{z}^{\frac{3}{2}}\bigg),
\end{aligned}$$
for constants $C_1, C_2$ and $C_3$.

\begin{remark}This example exhibits  the non-uniqueness of pluri-harmonic metrics on $\lambda$-flat bundles over a non-complete manifold.
\end{remark}

\section{Simpson--Mochizuki Correspondence}\label{sec3}

\subsection{Categorical Version}

\begin{definition}[\cite{TM2,TM4}]\
\begin{enumerate}
  \item Let $X$ be a complex projective manifold with a fixed ample line bundle $L$. A  $\lambda$-flat bundle $(E,D^\lambda)$ over $X$ is called \emph{$\mu_L$-stable} (resp. \emph{$\mu_L$-semistable})\footnote{ Sometimes we omit the notation $\mu_L$ when there is no ambiguity.} if for any $\lambda$-flat subbundle $(V,D^\lambda|_V)$ of $0<\rank(V)<\rank(E)$, we have the following inequality
  $$
  \mu_L(V)< \mu_L(E)\ (\text{resp. } \mu_L(V)\leq \mu_L(E)),
  $$
where $\mu_L(\bullet)=\frac{\deg(\bullet)}{\rank(\bullet)}$ denotes the slope of bundle with respect to $L$. It is \emph{$\mu_L$-polystable}\footnote{When $\lambda\neq0$, a $\lambda$-flat bundle $(E,\mathbb{D}^\lambda)$ is $\mu_L$-stable  if and only if it is simple, namely it has no non-trivial proper $\lambda$-flat subbundle, and $(E,\mathbb{D}^\lambda)$ is $\mu_L$-polystable  if and only if it is semisimple, namely it is a direct sum of simple $\lambda$-flat bundles. } if it decomposes as a direct sum of $\mu_L$-stable $\lambda$-flat bundles with the same slope.
  \item Let $X$ be an $n$-dimensional compact K\"{a}hler manifold with a K\"{a}hler form $\omega$. A  $\lambda$-flat bundle $(E,\mathbb{D}^\lambda)$ with a Hermitian metric $h$  over $X$ is called \emph{analytically stable} (resp. \emph{analytically semistable}) if for any $\lambda$-flat torsion-free coherent subsheaf $(V,\mathbb{D}^\lambda|_V)$ of $0<\rank(V)<\rank(E)$, we have the following inequality
  $$
  \mu_\omega(V)< \mu_\omega(E)\ \ (\text{resp. } \mu_\omega(V)\leq \mu_\omega(E)),
  $$
  where $\mu_\omega(\bullet)=\frac{\int_{X\backslash S(\bullet)} \Tr(G(h|_\bullet,\mathbb{D}^\lambda|_\bullet))\wedge \omega^{n-1}}{\rank(\bullet)}$ denotes the slope of sheaf with respect to $\omega$, for $S(\bullet)$ being the singular locus of the sheaf. It is \emph{analytically polystable} if it decomposes as an orthogonal direct sum of analytically stable $\lambda$-flat bundles with the same slope.
  
\end{enumerate}
\end{definition}

 By the wonderful work of Simpson and Mochizuki, we have the following theorem.
 
\begin{theorem}[\textbf{Simpson--Mochizuki Correspondence}, Kobayashi--Hitchin version, \cite{CS1,TM2}]\label{m}
Fix  $\lambda\in \mathbb{C}$.
\begin{enumerate}
                 \item Let $X$ be a complex projective manifold   with a fixed ample line bundle $L$. A $\lambda$-flat bundle $(E,D^\lambda)$ over $X$ is $\mu_L$-polystable  with vanishing Chern classes if and only if there is a pluri-harmonic metric $h$ on $(E,\mathbb{D}^\lambda)$.
                 \item Let $(X,\omega)$ be a compact K\"{a}hler manifold, $(E,\mathbb{D}^\lambda, h_0)$ be an analytically stable $\lambda$-flat bundle. Then there exists a unique Hermitian metric $h$ such that $\det(h)=\det(h_0)$ and  the Hermitian--Einstein condition $ \Lambda_\omega G(h,\mathbb{D}^\lambda)^\bot=0$ holds, where $G(h,\mathbb{D}^\lambda)^\bot$ denotes the trace-free part of $G(h,\mathbb{D}^\lambda)$.
                     \item (Uniqueness of pluri-harmonic metric) Let $h_i$ ($i=1,2$) be the pluri-harmonic metric on the $\lambda$-flat bundle $(E,\mathbb{D}^\lambda)$, then
\begin{itemize}
      \item we have the decomposition of $\lambda$-flat bundles $(E,\mathbb{D}^\lambda)=\bigoplus(E_a, \mathbb{D}^\lambda_a)$ which is orthogonal with respect to both of $h_i$ ($i=1,2$),
       \item the restrictions $h_{i,a}$ of $h_i$ to $E_a$ satisfy $h_{1,a}=c_ah_{2,a}$  for positive constants $c_a$.
\end{itemize}
\end{enumerate}

\end{theorem}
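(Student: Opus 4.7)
The proof splits naturally along the three parts. For the forward implication in (1), the pluri-harmonic metric $h$ exhibits $(E, \mathbb{D}^\lambda)$ through the Higgs data $((E, \bar{\partial}_h), \theta_h)$, whose Chern-Weil representatives for the unitary connection $\mathcal{D}_h = \partial_h + \bar{\partial}_h$ satisfy the standard vanishing identities forcing $c_i(E) = 0$; polystability then transfers from the Higgs side via slope comparison, since any $\mathbb{D}^\lambda$-invariant subbundle inherits a pluri-harmonic structure from $h$ after orthogonal projection. The reverse implication in (1) I would obtain from (2): the Hermitian-Einstein metric supplied by (2) automatically satisfies $G(h, \mathbb{D}^\lambda) \equiv 0$, because the integrated quantity $\int_X |G(h, \mathbb{D}^\lambda)|_h^2 \, \omega^n$ is, up to $\lambda$-dependent scalars visible from Proposition \ref{k}, a linear combination of characteristic numbers that vanishes by hypothesis; the vanishing of $G$ is condition (3) of the proposition characterising pluri-harmonicity, and uniqueness (3) in the theorem then yields the polystable decomposition.

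The analytic heart is part (2), which I would establish by a continuity method in the spirit of Donaldson and Simpson. Pick a background metric $h_0$ and consider the parametrised family $\Lambda_\omega G(h_t, \mathbb{D}^\lambda)^\bot = -t \log(h_0^{-1} h_t)$ for $t \in [0,1]$; solvability at $t = 1$ is standard because the linearised operator is strictly positive definite, while openness in $t$ follows from the implicit function theorem applied to $h \mapsto \Lambda_\omega G(h, \mathbb{D}^\lambda)^\bot$, whose ellipticity is read off from the K\"ahler identities of Proposition \ref{k}. The real work is closedness as $t \to 0$, which reduces to a $C^0$-estimate on the self-adjoint endomorphism $u_t := \log(h_0^{-1} h_t)$. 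I would derive this from a Donaldson-type functional $M(h_0, h)$ whose Euler-Lagrange equation is Hermitian-Einstein and whose coerciveness is equivalent to analytic stability; in a Uhlenbeck-Yau style dichotomy, failure of the $C^0$-bound after rescaling produces a weak $L^2$-limit of the spectral projectors of $u_t$ cutting out a $\mathbb{D}^\lambda$-invariant coherent subsheaf that destabilises $(E, \mathbb{D}^\lambda)$.

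For uniqueness (3), I would apply Theorem \ref{w} to $s := h_2^{-1} h_1 \in \End(E)$, which is self-adjoint and positive with respect to both metrics and which becomes $\mathbb{D}^\lambda$-flat as a section of the $\lambda$-flat bundle $\End(E)$ by pluri-harmonicity of both $h_i$; the same pluri-harmonicity also gives $G \equiv 0$ on $\End(E)$, so Theorem \ref{w} yields $\Delta_\omega \log |s|_h^2 \geq 0$ on the compact $X$, forcing $|s|_h^2$ to be constant and hence $s$ to be $\mathbb{D}^\lambda$-parallel. The eigenspace decomposition of the parallel self-adjoint $s$ then delivers the desired $\mathbb{D}^\lambda$-invariant orthogonal splitting $(E, \mathbb{D}^\lambda) = \bigoplus_a (E_a, \mathbb{D}^\lambda_a)$, on which $h_1$ and $h_2$ are proportional with constants $c_a$ equal to the eigenvalues of $s$. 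The main obstacle in this whole plan is the closedness step of (2): the Uhlenbeck-Yau subsheaf extraction must be carried out in the non-unitary $\lambda$-twisted setting, where the weak compactness of $L^2$-projectors is more delicate than in the Higgs or flat case, and the K\"ahler identities of Proposition \ref{k} must be invoked systematically to convert between $\mathbb{D}^\lambda$-estimates and $\mathcal{D}_h$-estimates.
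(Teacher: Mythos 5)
First, a point of orientation: the paper does not prove this theorem at all --- it is quoted from Mochizuki \cite{TM} --- so there is no internal proof to measure your sketch against. Judged on its own terms, your outline of parts (1) and (2) reproduces the standard Simpson--Uhlenbeck--Yau architecture (continuity method, Donaldson functional, destabilising subsheaf extracted from failure of the $C^0$-bound, and the deduction of $G(h,\mathbb{D}^\lambda)\equiv 0$ from the Hermitian--Einstein condition together with vanishing characteristic numbers via a Chern--Weil identity for $\int_X|G|_h^2\,\omega^n$), which is indeed the strategy Mochizuki adapts to the $\lambda$-twisted setting. As a proof it is only a program --- the regularity theory for the perturbed equation, the weak $L^2_1$ subsheaf extraction in the non-unitary setting, and the passage between $\mu_L$-polystability on a projective variety and analytic stability of the stable summands are each substantial --- but the skeleton is the right one.

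Part (3), however, contains a genuine circularity. You assert that $s:=h_2^{-1}h_1$ ``becomes $\mathbb{D}^\lambda$-flat \dots\ by pluri-harmonicity of both $h_i$'' and then invoke Theorem \ref{w} to conclude that $|s|$ is constant and hence that $s$ is $\mathbb{D}^\lambda$-parallel. But $\mathbb{D}^\lambda s=0$ \emph{is} parallelism: if you already have it, Theorem \ref{w} adds nothing, and if you do not, pluri-harmonicity of the two metrics does not by itself give it. The entire content of the uniqueness statement is the derivation of $\mathbb{D}^\lambda s=0$, and it comes from the Weitzenb\"ock-type identity
$$0=G(h_1,\mathbb{D}^\lambda)-G(h_2,\mathbb{D}^\lambda)=\widetilde{\mathbb{D}^\lambda}(s^{-1})\widetilde{{\mathbb{D}^\lambda_{h_2}}^\star}s+s^{-1}\widetilde{\mathbb{D}^\lambda}\widetilde{{\mathbb{D}^\lambda_{h_2}}^\star}s,$$
whose trace integrates over the compact $X$ (the Laplacian term on the left contributing zero) to
$$0=-\frac{1}{1+|\lambda|^2}\int_X\bigl|(\widetilde{\mathbb{D}^\lambda}s)\,s^{-1/2}\bigr|^2_{h_2,\omega}\,\omega^n,$$
forcing $\mathbb{D}^\lambda s=0$. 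The paper reproduces exactly this computation inside the proof of Proposition \ref{6}(4) in the stable case, where simplicity then forces $s$ to be scalar; in the polystable case one instead decomposes $E$ into the eigenbundles of the parallel, bi-self-adjoint $s$, which are $\mathbb{D}^\lambda$-invariant and orthogonal for both metrics, yielding $h_{1,a}=c_ah_{2,a}$. Your eigenbundle endgame is fine once parallelism is in hand; what is missing is the integration-by-parts step that produces it.
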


\begin{remark}
This correspondence still holds for the case of stable parabolic logarithmic $\lambda$-flat bundles over a projective variety with a simple normal crossing divisor by imposing a compatibility condition of pluri-harmonic metric with the parabolic structure (for details see \cite{CS2,TM1,TM2}).
\end{remark}

As a direct application of the above theorem, we have the following correspondence as the interpolation of the usual Corlette--Simpson correspondence \cite{Cor,CS1,CS4} and the Riemann--Hilbert correspondence.

\begin{corollary}[\textbf{Simpson--Mochizuki Correspondence}, Categorical version, {\cite[Corollary 5.18]{TM2}}]\label{cor3.4}
Let $X$ be a complex projective manifold. Then for  any $\lambda\in \mathbb{C}$, there is an equivalence   between the category of  $\mu_L$-polystable $\lambda$-flat bundles with vanishing Chern classes and the category of semisimple representations of the fundamental group $\pi_1(X)$ into $\mathrm{GL}(r,\mathbb{C})$. This equivalence preserves tensor products, direct sums and duals.
\end{corollary}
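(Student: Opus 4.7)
The plan is to split into the cases $\lambda=0$ and $\lambda\neq 0$ and reduce each to a known form of the non-abelian Hodge correspondence. When $\lambda=0$, a flat $\lambda$-connection on $E$ is exactly a Higgs field, the $\mu_L$-(poly)stability condition in the preceding definition coincides with the classical one for Higgs bundles, and the statement becomes the Simpson-Hitchin correspondence between $\mu_L$-polystable Higgs bundles with vanishing Chern classes and semi-simple representations of $\pi_1(X)$, which we may cite directly.

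For $\lambda\neq 0$, I would use the rescaling functor $D^\lambda\mapsto \lambda^{-1}D^\lambda$. On a holomorphic $\lambda$-connection this turns the $\lambda$-twisted Leibniz rule into the ordinary Leibniz rule, and flatness is preserved since $(\lambda^{-1}D^\lambda)^2=\lambda^{-2}(D^\lambda)^2=0$. Multiplication by $\lambda$ provides the inverse functor, so this yields an equivalence between the category of holomorphic $\lambda$-flat bundles and the category of ordinary holomorphic flat bundles on $X$. Because the underlying holomorphic bundle is unchanged, the characteristic numbers are preserved, the collection of $D^\lambda$-invariant subbundles literally equals the collection of $\lambda^{-1}D^\lambda$-invariant subbundles, and therefore slopes and $\mu_L$-(poly)stability match on both sides. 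Composing the rescaling equivalence with the classical Corlette-Simpson correspondence for flat bundles gives the desired equivalence with semi-simple representations of $\pi_1(X)$.

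The compatibility with $\otimes$, $\oplus$ and duals is then automatic: rescaling commutes with each of these operations (one checks, for instance, that $\lambda^{-1}(D_1^\lambda\otimes \mathrm{id}+\mathrm{id}\otimes D_2^\lambda)=(\lambda^{-1}D_1^\lambda)\otimes \mathrm{id}+\mathrm{id}\otimes(\lambda^{-1}D_2^\lambda)$), and so do the underlying Corlette-Simpson and Hitchin-Simpson equivalences. The main obstacle is mostly bookkeeping, namely confirming that ``$\mu_L$-polystability'' and the ``trivial characteristic numbers'' hypothesis are genuinely invariant under rescaling and that the equivalences interchange these conditions uniformly in $\lambda\in\mathbb{C}^*$; once the slope and subbundle comparison is established this is straightforward. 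As an alternative, one could bypass the case distinction by invoking Mochizuki's Theorem~\ref{m} directly: a pluri-harmonic metric $h$ on $(E,\mathbb{D}^\lambda)$ endows $E$ with a harmonic bundle structure, and the associated flat metric connection $\partial_h+\bar{\partial}_h+\theta_h+\theta_h^\dagger$ has monodromy a semi-simple representation of $\pi_1(X)$, with functoriality in $\otimes$, $\oplus$, and duals inherited from that of the harmonic bundle construction.
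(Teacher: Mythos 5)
Your argument is correct, but your primary route (rescaling) is not the one the paper takes. The paper handles $\lambda\neq 0$ by invoking Mochizuki's Theorem~\ref{m} to produce a pluri-harmonic metric $h$ on the polystable $\lambda$-flat bundle, expanding $0=(\mathbb{D}^\lambda)^2$ to extract the harmonic-bundle equations $R(h)+[\theta_h,\theta_h^\dagger]=\partial_h\theta_h=\bar\partial_h\theta_h^\dagger=0$, and then passing to the semi-simple representation through the associated harmonic Higgs bundle $((E,\bar\partial_h),\theta_h,h)$ and the Hitchin--Simpson correspondence; the converse direction reassembles $\mathbb{D}^\lambda=(\lambda\partial_{E,h}+\theta)+(\bar\partial_E+\lambda\theta_h^\dagger)$ from a harmonic Higgs bundle, and functoriality in $\otimes$, $\oplus$ and duals is deduced from the fact that pluri-harmonic metrics are preserved under these operations. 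This is essentially the ``alternative'' you sketch in your last sentence. Your main argument via $D^\lambda\mapsto\lambda^{-1}D^\lambda$ is nonetheless sound: the rescaled operator obeys the untwisted Leibniz rule, the underlying coherent sheaf and hence all degrees are unchanged, $D^\lambda$-invariant subbundles coincide with $\lambda^{-1}D^\lambda$-invariant ones, and for flat bundles with vanishing Chern classes $\mu_L$-(poly)stability reduces to (semi-)simplicity of the monodromy, so composing with Riemann--Hilbert gives the equivalence with less analytic input. Indeed the paper's introduction explicitly acknowledges this rescaling route. Two remarks on what each approach buys. First, the two constructions produce genuinely different equivalences of categories: your functor sends $(E,D^\lambda)$ to the monodromy of $\lambda^{-1}D^\lambda$, whereas the paper's sends it to the representation attached to the Higgs bundle $((E,\bar\partial_h),\theta_h)$ via its own harmonic metric; both prove the stated existence claim, but the Mochizuki version is the one the paper needs later (it underlies the homeomorphism of Corollary~\ref{p} and the dynamical system $m_{(\lambda,t)}$), and the paper argues it gives the more natural interpolation in $\lambda$. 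Second, if you do present the rescaling proof, you should make explicit the small bookkeeping step that for a flat bundle with trivial characteristic numbers every flat subbundle has slope zero, so $\mu_L$-stability is literally irreducibility and $\mu_L$-polystability is semi-simplicity; this is where the hypothesis on characteristic numbers enters your argument.
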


\begin{proof}
For the case of $\lambda=0$, we have  the usual Simpson correspondence. So we assume $\lambda\neq 0$.
Let $(E,D^\lambda)$ be a $\mu_L$-polystable $\lambda$-flat bundle (with trivial characteristic numbers), then there is a pluri-harmonic metric $h$ on $E$. Therefore, we get
\begin{align*}
0=(\mathbb{D}^\lambda)^2&=(\lambda\partial_h+\bar{\partial}_h+\theta_h+\lambda\theta_h^\dagger)^2\\
&=\lambda(R(h)+[\theta_h,\theta_h^\dagger]+\partial_h\theta_h+\bar{\partial}_h\theta_h^\dagger),
\end{align*}
where $R(h)=(\mathcal{D}_h)^2$ is the curvature of the unitary connection $\mathcal{D}_h$, hence by Proposition \ref{kk}
$$R(h)+[\theta_h,\theta_h^\dagger]=\widetilde{\partial_h}\theta_h=\widetilde{\bar{\partial}_h}\theta_h^\dagger=0,$$
which  implies $((E,\bar{\partial}_h),\theta_h,h)$ is  a harmonic Higgs bundle associated with  a semi-simple representation $\rho: \pi_1(X)\to \mathrm{GL}(r,\mathbb{C})$ by the Hitchin--Simpson correspondence.

Conversely, if we have a semi-simple representation $\rho: \pi_1(X)\to \mathrm{GL}(r,\mathbb{C})$, then we have a  Higgs bundle $((E,\bar{\partial}_E),\theta,h)$ with the pluri-harmonic metric $h$, which gives rise to  a flat $\lambda$-connection $\mathbb{D}^\lambda=d_E'+d_E''$ with
$$
d_E'=\lambda\partial_{E,h}+\theta,\ \
d_E''=\bar{\partial}_E+\lambda\theta_h^\dagger,
$$
where $\partial_{E,h}$ is a (1,0)-type operator such that $\partial_{E,h}+\bar{\partial}_E$ is a unitary connection with respect to $h$, and $\theta_h^\dagger$ is the adjoint of $\theta$ with respect to $h$. Clearly, $h$ is also a pluri-harmonic metric for the $\lambda$-flat bundle $(E,\mathbb{D}^\lambda)$, hence it is polystable with trivial characteristic numbers.

Since pluri-harmonic metrics preserve tensor products, direct sums and duals, the equivalence described as above also preserves them.
\end{proof}

\begin{corollary}
If $X$ is a Riemann surface and $(E,\mathbb{D}^\lambda)$ is a stable $\lambda$-flat bundle over $X$ of rank $r\geq2$ and with vanishing the first Chern class, then there is no non-trivial global $\mathbb{D}^\lambda$-flat section of $E$.
\end{corollary}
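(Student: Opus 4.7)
The strategy is to combine Mochizuki correspondence (Theorem~\ref{m}) with the subharmonicity estimate of Theorem~\ref{w} and the maximum principle on the compact Riemann surface $X$. Schematically: a pluri-harmonic metric kills the operator $G(h,\mathbb{D}^\lambda)$, so Theorem~\ref{w} forces $\log(|s|_h^2)$ to be subharmonic; compactness forces it to be constant; and the resulting trivial $\lambda$-flat sub-line-bundle then contradicts strict stability.

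In detail, since $(E,\mathbb{D}^\lambda)$ is stable with vanishing characteristic numbers, it is in particular $\mu_L$-polystable, so Theorem~\ref{m}(1) supplies a pluri-harmonic metric $h$. By condition (3) of the proposition characterising pluri-harmonic metrics, we have $G(h,\mathbb{D}^\lambda)=0$. Let $s$ be a putative non-trivial global $\mathbb{D}^\lambda$-flat section. Feeding $G=0$ into Theorem~\ref{w} gives $\Delta_\omega\log(|s|_h^2)\geq 0$ wherever $s\ne 0$, so $\log(|s|_h^2)$ is smooth and subharmonic on $X\setminus Z(s)$.

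Next I would promote this to a global subharmonic function. Because $d_E''s=0$, the section $s$ is holomorphic for the holomorphic structure of $E$, and since $s\not\equiv 0$, its zero set $Z(s)$ is a finite subset of the compact Riemann surface $X$. Declaring $\log(|s|_h^2)=-\infty$ on $Z(s)$ produces an upper semi-continuous function that is subharmonic in the distributional sense on all of $X$ (a Poincar\'e--Lelong-type extension). Since $X$ is compact without boundary and this function is bounded above, the maximum principle forces it to be constant, and the finiteness of the maximum rules out the constant $-\infty$; hence $s$ is nowhere vanishing and $|s|_h$ is a positive constant.

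Finally, the nowhere-vanishing section $s$ trivialises a rank-one subbundle $L\subset E$, and the $\lambda$-twisted Leibniz rule $\mathbb{D}^\lambda(fs)=\lambda s\otimes\partial f+s\otimes\bar\partial f$ shows that $L$ is $\mathbb{D}^\lambda$-stable and $\lambda$-flatly isomorphic to $(\mathcal{O}_X,\lambda d)$; in particular $\deg L=0$. Then $\mu_L(L)=0=\mu_L(E)$, which violates strict $\mu_L$-stability of $E$, giving the required contradiction. The step I expect to be most delicate is the extension of subharmonicity across $Z(s)$, and the borderline case $\rank E=1$: the stability inequality is vacuous for line bundles, so the conclusion needs to be read with the implicit hypothesis $\rank E\geq 2$, the exception being exactly $(E,\mathbb{D}^\lambda)\cong(\mathcal{O}_X,\lambda d)$.
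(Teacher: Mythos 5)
Your argument is essentially the paper's own proof: obtain a pluri-harmonic metric from Mochizuki correspondence, use Theorem~\ref{w} with $G(h,\mathbb{D}^\lambda)=0$ to make $\log(|s|_h^2)$ subharmonic hence constant on the compact surface, and derive a contradiction with stability from the resulting trivial $\lambda$-flat line subbundle; your extra care about extending subharmonicity across the zero set and the rank-one caveat are reasonable refinements the paper elides. The only omission is the case $\lambda=0$, which the statement does not exclude and which the paper dispatches separately by citing a vanishing theorem for Higgs bundles.
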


\begin{proof}
When $\lambda=0$, the claim follows from \cite[Theorem 3.1]{Car}. Assume $\lambda\neq 0$. Let $h$ be  the pluri-harmonic on the stable $\lambda$-flat bundle $(E,\mathbb{D}^\lambda)$, and $s$ be the non-trivial global $\mathbb{D}^\lambda$-flat section, then the norm
$|s|_h^2$ is a sub-harmonic function by Proposition  \ref{w}. Since $X$ is compact,  $|s|_h^2$ is a nonzero constant, hence the section $s$ generates a trivial  line subbundle of $(E,\mathbb{D}^\lambda)$, which contradicts to the stability of $(E,\mathbb{D}^\lambda)$.
\end{proof}

\subsection{Moduli version}

Let $X$ be a complex projective manifold. Fixing $\lambda\in \mathbb{C}$, denote by  $\mathbf{M}^\lambda_{\mathrm{Hod}}(X,r)$ the moduli stack of rank $r$ $\lambda$-flat bundles with vanishing Chern classes over $X$, and by  $\mathbb{M}^\lambda_{\mathrm{Hod}}(X,r)$ the coarse moduli space for the semistable stratum of this stack, which is a quasi-projective variety and
 parameterizes the isomorphism classes of  polystable $\lambda$-flat bundles. Let $M^\lambda_{\mathrm{Hod}}(X,r)$ be the  smooth locus of $\mathbb{M}^\lambda_{\mathrm{Hod}}(X,r)$, which is  a Zariski dense open subset  and parameterizes the isomorphism classes of  stable $\lambda$-flat bundles. In particular, $\mathbb{M}^1_{\mathrm{Hod}}(X,r)=\mathbb{M}_{\mathrm{dR}}(X,r)$ and $\mathbb{M}^0_{\mathrm{Hod}}(X,r)=\mathbb{M}_{\mathrm{Dol}}(X,r)$.
 
Picking  a base point $x\in X$, we have the \emph{representation space} $\mathbb{R}_{\mathrm{Hod}}^\lambda(X,x,r)$, which is the fine moduli space of semistable  $\lambda$-flat bundles provided with a frame for the fiber over $x$, in particular, $\mathbb{R}^1_{\mathrm{Hod}}(X,r)=\mathbb{R}_{\mathrm{dR}}(X,r)$, and $\mathbb{R}^0_{\mathrm{Hod}}(X,r)=\mathbb{R}_{\mathrm{Dol}}(X,r)$. The group $\mathrm{GL}(r,\mathbb{C})$ acts on $\mathbb{R}_{\mathrm{Hod}}^\lambda(X,x,r)$, and $\mathbb{M}^\lambda_{\mathrm{Hod}}(X,r)=\mathbb{R}_{\mathrm{Hod}}^\lambda(X,x,r)/\!\!/\mathrm{GL}(r,\mathbb{C})$ as the universal categorical quotient. We also consider the subset $R_{\mathrm{Hod}}^\lambda(X,x,r)\subset \mathbb{R}_{\mathrm{Hod}}^\lambda(X,x,r)$ that consists of those points which admits a pluri-harmonic metric compatible with the frame at $x$. Such condition fixes the metric uniquely. The group $U(r)$ acts on $R_{\mathrm{Hod}}^\lambda(X,x,r)$, and $\mathbb{M}^\lambda_{\mathrm{Hod}}(X,r)={R}_{\mathrm{Hod}}^\lambda(X,x,r)/U(r)$ as the topological quotient.

Let $\mathbb{N}_{\mathrm{Hod}}^\lambda(X,r)$ be the Zariski  dense open subset of $\mathbb{M}^\lambda_{\mathrm{Hod}}(X,r)$ that parameterizes $\lambda$-flat bundles  such that the underlying vector bundles are semistable, which is an affine bundle over the coarse moduli space $\mathbb{B}(X,r)$ of semistable vector bundles of rank $r$ with  vanishing  Chern classes over $X$.

  \begin{proposition}
   Suppose $r\geq 2$.
  \begin{enumerate}
    \item Let $X$ be a  Riemann surface of genus $g\geq 2$. One defines $ \mathring{\mathbb{M}}_{\mathrm{Hod}}^\lambda(X,r)=\mathbb{M}^\lambda_{\mathrm{Hod}}(X,r)\backslash M^\lambda_{\mathrm{Hod}}(X,r)$. If both $M^\lambda_{\mathrm{Hod}}(X,r)$ and $ \mathring{\mathbb{M}}_{\mathrm{Hod}}^\lambda(X,r)$ are nonempty, then for the codimension of $\mathring{\mathbb{M}}_{\mathrm{Hod}}^\lambda(X,r)$ in $\mathbb{M}^\lambda_{\mathrm{Hod}}(X,r)$ we have 
    $$
    \mathrm{codim}_\mathbb{C}\mathring{\mathbb{M}}_{\mathrm{Hod}}^\lambda(X,r)\geq 2.
    $$
    \item Let $X$ be a  Riemann surface of genus $g\geq 3$. One defines $\widetilde{\mathbb{M}}_{\mathrm{Hod}}^\lambda(X,r)=\mathbb{M}^\lambda_{\mathrm{Hod}}(X,r)\backslash \mathbb{N}^\lambda_{\mathrm{Hod}}(X,r)$. Then for the codimension of $\widetilde{\mathbb{M}}_{\mathrm{Hod}}^\lambda(X,r)$ in $\mathbb{M}^\lambda_{\mathrm{Hod}}(X,r)$ we have 
    $$
    \mathrm{codim}_\mathbb{C}\widetilde{\mathbb{M}}_{\mathrm{Hod}}^\lambda(X,r)\geq 2.
    $$
  \end{enumerate}

  \end{proposition}

  \begin{proof} (1) For any partition $\overrightarrow{r}=(r_1,\cdots, r_k)\in \mathbb{Z}_{+}^{\oplus k}$ with $\sum_{i=1}^kr_i=r$ and $1<k\leq r$, we introduce a map
  $$
  \delta_{\overrightarrow{r}}:M^\lambda_{\mathrm{Hod}}(X,\overrightarrow{r}):=M^\lambda_{\mathrm{Hod}}(X,r_1)\times\cdots\times M^\lambda_{\mathrm{Hod}}(X,r_k)\rightarrow \mathbb{M}^\lambda_{\mathrm{Hod}}(X,r)
  $$
  by $((E_1,\theta_1),\cdots,(E_k,\theta_k))\mapsto (\bigoplus_{i=1}^kE_i,\bigoplus_{i=1}^k\theta_k)$. Since $\delta_{\overrightarrow{r}}$ is injective, we have
  $$
  \dim_\mathbb{C}\mathring{\mathbb{M}}_{\mathrm{Hod}}^\lambda(X,r)=\dim_\mathbb{C}\bigcup_{\{\overrightarrow{r}\}}\mathrm{Im}(\delta_{\overrightarrow{r}})=\max_{\{\overrightarrow{r}\}}\{\dim_\mathbb{C}M^\lambda_{\mathrm{Hod}}(X,\overrightarrow{r})\}.
  $$
  Hitchin and Simpson calculated the dimension of moduli space \cite{Hit,CS4,CS5} 
  $$
  \dim_\mathbb{C}M^\lambda_{\mathrm{Hod}}(X,r_i)=\dim_\mathbb{C}M_{\mathrm{Dol}}(X,r_i)=2r_i^2(g-1)+2,$$  then one can easily show that $$\max_{\{\overrightarrow{r}\}}\{\dim_\mathbb{C}M^\lambda_{\mathrm{Hod}}(X,\overrightarrow{r})\}=2(g-1)((r-1)^2+1)+4,
  $$
  which means that $\mathrm{codim}_\mathbb{C}\mathring{\mathbb{M}}_{\mathrm{Hod}}^\lambda(X,r)=4(g-1)(r-1)-2\geq 2$.

  (2) Let $N_{\mathrm{Hod}}^\lambda(X,r)=\mathbb{N}_{\mathrm{Hod}}^\lambda(X,r)\bigcap M_{\mathrm{Hod}}^\lambda(X,r)$,
   $\mathring{\mathbb{N}}_{\mathrm{Hod}}^\lambda(X,r)=\mathbb{N}_{\mathrm{Hod}}^\lambda(X,r)\backslash N_{\mathrm{Hod}}^\lambda(X,r)$, and $\widetilde{M}_{\mathrm{Hod}}^\lambda(X,r)=M^\lambda_{\mathrm{Hod}}(X,r)\backslash N^\lambda_{\mathrm{Hod}}(X,r)$.
   The same  argument as (1) shows that
   $$
   \mathrm{codim}_\mathbb{C}\mathring{\mathbb{N}}_{\mathrm{Hod}}^\lambda(X,r)=2(g-1)(r-1)-1\geq 3.
   $$
   Therefore, it suffices to prove $\mathrm{codim}_\mathbb{C}\widetilde{M}_{\mathrm{Hod}}^\lambda(X,r)\geq 2$.

   For a filtration $E=F^0\supset F^1\supset\cdots \supset F^{k-1}\supset F^k=0$ of subbundles of a given vector bundle $E$, the pair $(\overrightarrow{r},\overrightarrow{d})$ is called the \emph{type} of this filtration, where 
\begin{align*}
\overrightarrow{r}&=(\rank (F^{k-1}), \rank(F^{k-2}/F^{k-1}),\cdots,\rank(E/F^1)),\\
\overrightarrow{d}&=(\deg (F^{k-1}), \deg(F^{k-2}/F^{k-1}),\cdots,\deg(E/F^1)).
\end{align*}   
The moduli space $M_{\mathrm{Hod}}^\lambda(X,r)$ admits  a Harder--Narasimhan stratification
  $$
  M_{\mathrm{Hod}}^\lambda(X,r)=\coprod_{(\overrightarrow{r},\overrightarrow{d})}H_{(\overrightarrow{r},\overrightarrow{d})}(X,r),
  $$
  where the locally closed subset $H_{(\overrightarrow{r},\overrightarrow{d})}(X,r)$ of $M_{\mathrm{Hod}}^\lambda(X,r)$ parameterizes stable $\lambda$-flat bundles  such that the underlying vector bundles having Harder--Narasimhan  type $(\overrightarrow{r},\overrightarrow{d})$. Due to the boundedness of moduli space \cite{CS5}, there are finitely many Harder--Narasimhan types occur in the disjoint union.  The forgetful map $f:H_{(\overrightarrow{r},\overrightarrow{d})}(X,r)\rightarrow B_{(\overrightarrow{r},\overrightarrow{d})}(X,r)$
  via $(E,D^\lambda)\mapsto E$ gives rise to  a fibration over the space $ B_{(\overrightarrow{r},\overrightarrow{d})}(X,r)$ of isomorphism classes
  of vector bundles with Harder--Narasimhan type $(\overrightarrow{r},\overrightarrow{d})$ with fibers as Zariski open dense subsets of an affine space of dimension $d_f$.  By Riemann--Roch formula, $d_f$ is given by
  $$
  d_f=\dim_\mathbb{C}H^1(X,\End(E))-\dim_\mathbb{C}H^0(X,\End(E))+1=r^2(g-1)+1.
  $$
  Obviously, we have  
  $$
  \widetilde{M}_{\mathrm{Hod}}^\lambda(X,r)=(\coprod_{\overrightarrow{r}\neq (r),\overrightarrow{d}\neq (0)}H_{(\overrightarrow{r},\overrightarrow{d})}(X,r))\coprod \mathring{H}(X,r),
  $$ 
  where $\mathring{H}(X,r)$ is a subset of $H_{\overrightarrow{r}=(r),\overrightarrow{d}=(0)}(X,r)$ consisting of $\lambda$-flat bundles such that the underlying vector bundle is semistable but not stable. Then,  by a result of \cite{Bho} which shows that the dimension of $ B_{(\overrightarrow{r},\overrightarrow{d})}(X,r)$ is at most $r^2(g-1)-(r-1)(g-2)$ if $\overrightarrow{r}\neq (r),\overrightarrow{d}\neq (0)$, we conclude that
  $$
  \mathrm{codim}_\mathbb{C}\coprod_{\overrightarrow{r}\neq (r),\overrightarrow{d}\neq (0)}H_{(\overrightarrow{r},\overrightarrow{d})}(X,r)\geq 1+(r-1)(g-2)\geq 2.
  $$
  And by a result of \cite{BM} which asserts that the dimension of $ \mathring{H}(X,r)$ is at most $(2r^2-r+1)(g-1)+2$, we have
  $$\mathrm{codim}_\mathbb{C}  \mathring{H}(X,r)\geq (r-1)(g-1)\geq 2.$$
  From the above two inequalities the final result follows.
  \end{proof}

The proof of the following theorem is after Simpson (\cite[Lemma 7.13]{CS6}, \cite[Lemma 8.1]{CS9}) essentially.

\begin{theorem}\label{d}
The natural quotient map
$q:R_{\mathrm{Hod}}^\lambda(X,x,r)\rightarrow \mathbb{M}_{\mathrm{Hod}}^\lambda(X,r)$ is proper.
\end{theorem}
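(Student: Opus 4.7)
The plan is to reduce the properness of $q$ to two soft inputs: the description of $\mathbb{M}_{\mathrm{Hod}}^\lambda(X,r)$ as the topological quotient $R_{\mathrm{Hod}}^\lambda(X,x,r)/U(r)$, and the compactness of $U(r)$. Mochizuki correspondence (Theorem \ref{m}) is the substantive ingredient that makes both available for arbitrary $\lambda \neq 0$: part (1) ensures surjectivity of $q$, because every polystable $\lambda$-flat class admits a pluri-harmonic metric whose orthonormalization of the chosen frame at $x$ produces a preimage; part (3) delivers the uniqueness of the pluri-harmonic metric in exactly the form required for this quotient description to match the moduli-space topology.

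Since $\mathbb{M}_{\mathrm{Hod}}^\lambda(X,r)$ is a quasi-projective variety, it is first countable, so properness of $q$ is equivalent to the sequential statement: every sequence $\{r_n\} \subset R_{\mathrm{Hod}}^\lambda(X,x,r)$ with $q(r_n)$ convergent in $\mathbb{M}_{\mathrm{Hod}}^\lambda(X,r)$ has a subsequence converging in $R_{\mathrm{Hod}}^\lambda(X,x,r)$. Suppose $q(r_n) \to m_\infty$ and pick any lift $r_\infty \in q^{-1}(m_\infty)$, whose existence is guaranteed by Mochizuki. Because the $U(r)$-action on $R_{\mathrm{Hod}}^\lambda(X,x,r)$ is continuous, the saturation $U(r) \cdot V$ of any open set $V$ is open, so $q$ is an open map. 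Choosing a countable neighborhood basis $V_1 \supset V_2 \supset \cdots$ of $r_\infty$ and applying a diagonal extraction, we find elements $u_n \in U(r)$ such that $u_n \cdot r_n \to r_\infty$ in $R_{\mathrm{Hod}}^\lambda(X,x,r)$.

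Compactness of $U(r)$ now yields a further subsequence along which $u_n \to u \in U(r)$, and continuity of the action gives
$$r_n \;=\; u_n^{-1}(u_n \cdot r_n) \;\longrightarrow\; u^{-1} \cdot r_\infty \;\in\; R_{\mathrm{Hod}}^\lambda(X,x,r),$$
which completes the sequential argument. The principal difficulty is not the group-theoretic extraction, which is formal once the topological quotient description is in hand, but the justification of that description at a general $\lambda$: one must show that the pluri-harmonic data depend continuously on the isomorphism class of the underlying $\lambda$-flat bundle, so that the $U(r)$-quotient topology on $R_{\mathrm{Hod}}^\lambda(X,x,r)$ agrees with the intrinsic moduli topology on $\mathbb{M}_{\mathrm{Hod}}^\lambda(X,r)$. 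This continuity is exactly the additional content Mochizuki correspondence and the uniqueness statement of Theorem \ref{m}(3) supply for general $\lambda$, paralleling Simpson's treatment (Lemma 7.13 of \cite{CSaa}, Lemma 8.1 of \cite{CS6}) of the cases $\lambda = 0$ and $\lambda = 1$.
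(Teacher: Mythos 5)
Your argument is circular in the context of this paper. The entire weight of your proof rests on the assertion that $\mathbb{M}_{\mathrm{Hod}}^\lambda(X,r)={R}_{\mathrm{Hod}}^\lambda(X,x,r)/U(r)$ \emph{as a topological quotient}, i.e.\ that the continuous bijection $R_{\mathrm{Hod}}^\lambda(X,x,r)/U(r)\rightarrow \mathbb{M}_{\mathrm{Hod}}^\lambda(X,r)$ induced by $q$ is a homeomorphism. But for a compact group acting on a locally compact Hausdorff space, that statement is essentially \emph{equivalent} to the properness of $q$ (proper $\Rightarrow$ closed $\Rightarrow$ quotient map, and conversely the quotient map by a compact group is closed with compact fibers, hence proper). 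In this paper, as in Simpson's treatment of $\lambda=0,1$, the logical order is the reverse of yours: properness is proved first by hard analysis, and the identification of $\mathbb{M}_{\mathrm{Hod}}^\lambda$ with $R_{\mathrm{Hod}}^\lambda/U(r)$ and the homeomorphism with $\mathbb{M}_{\mathrm{Dol}}$ (Corollary \ref{p}, parallel to Simpson's Theorem 7.18) are \emph{consequences} of Theorem \ref{d}. The sentence in Section 3 that you quote is a forward-looking description of the setup, not an available input. Your own closing paragraph concedes that the ``principal difficulty'' is justifying the quotient description, but then claims Theorem \ref{m}(3) supplies it; it does not. Mochizuki's theorem gives existence and uniqueness of the pluri-harmonic metric for each \emph{individual} polystable $\lambda$-flat bundle — a pointwise, set-theoretic statement — and says nothing about how the metric varies along a sequence of bundles, which is exactly what is needed. (The same issue infects your intermediate steps: openness of $q$ and the identification of fibers of $q$ with $U(r)$-orbits in the intrinsic moduli topology both presuppose the quotient description.)

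The missing content is the analytic compactness argument that occupies the paper's proof: given a sequence in $R_{\mathrm{Hod}}^\lambda(X,x,r)$ lying over a compact subset of $\mathbb{M}_{\mathrm{Hod}}^\lambda(X,r)$, one must produce a convergent subsequence, and this requires \emph{uniform} control of the pluri-harmonic metrics $h_i$. Concretely, the paper bounds the monodromy norms $|\rho_i(\gamma)|$ via the rescaling/Riemann--Hilbert identification with $\mathbb{M}_{\mathrm{B}}(X,r)$, then uses a continuity-in-$t$ argument together with Simpson's properness of $\rho\mapsto|\rho(\gamma)|$ to bound the auxiliary representations $\tilde\rho_i$ coming from the harmonic bundles, deduces uniform $L^2$ bounds on the Higgs fields $\theta_{h_i}$ (Corollary 6 of \cite{s}), hence $C^0$ bounds on their characteristic polynomials, and only then invokes the convergence results for harmonic bundles (Lemma 2.8 of \cite{CS3}, Proposition 7.9 of \cite{CSaa}) and Theorem 5.12 of \cite{CS} to extract a limit in $R_{\mathrm{Hod}}^\lambda(X,x,r)$. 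None of this is formal, and none of it is replaced by the $U(r)$-extraction in your argument. Your reduction to the topological quotient statement is valid as pure topology, but it reduces the theorem to an equivalent unproved claim rather than to something already established.
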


\begin{proof}
The cases of $\lambda=0,1$ have been proved by Simpson (\cite[Corollary 7.12, Corollary 7.15]{CS6}). 

For the case of $\lambda\neq 0,1$, we consider a sequence $\{((E_i,d^{\prime\prime}_{E_i}),D^\lambda_i,\beta_i)\}$ lying inside the inverse image of a compact subset of $\mathbb{M}_{\mathrm{Hod}}^\lambda(X,r)$, where $\beta_i$ is a frame on $E_{ix}$, and let $h_i$ be the unique pluri-harmonic metric on $((E_i,d^{\prime\prime}_{E_i}),D^\lambda_i,\beta_i)$.  It suffices to show that the characteristic polynomial of the  corresponding
Higgs fields $\{\theta_{h_i}\}$ are uniformly bounded in $C^0$-norm. By the map $((E,d^{\prime\prime}),D^\lambda,\beta)\mapsto ((E,d^{\prime\prime}),\lambda^{-1}D^\lambda,\beta)$ and the Riemann--Hilbert correspondence, $\mathbb{M}_{\mathrm{Hod}}^\lambda(X,r)$ is complex analytically isomorphic to $\mathbb{M}_\mathrm{B}(X,r)$, the coarse  moduli space of   representations $\pi_1(X,x)\to\mathrm{GL}(r,\mathbb{C})$. Let $\rho_i$ be the  monodromy representation corresponding to $((E_i,d^{\prime\prime}_{E_i}),D^\lambda_i,\beta_i)$, then $\{\rho_i\}$ lie over a compact subset of $\mathbb{M}_\mathrm{B}(X,r)$, hence the norms  $\{|\rho_i(\gamma)|\}=\{\sqrt{\Tr(\rho_i(\gamma)\rho_i^\dagger(\gamma))}\}$ are uniformly bounded for any generator $\gamma$ of $\pi_1(X,x)$. Denote by $\rho^{(\infty)}(\gamma)$ the limit point of $\{\rho_i(\gamma)\}$. By virtue of Mochizuki correspondence, each $\rho_i$ produces another simple monodromy representation $\tilde \rho_i$ of $\pi_1(X,x)$ given by the flat bundle $((E,\bar\partial_{h_i}+\bar \theta_{h_i}),\partial_{h_i}+\theta_{h_i},\beta_i)$, then the norms  $\{|\tilde\rho_i(\gamma)|\}\}$ are also  uniformly bounded. Indeed, we consider a family of flat bundles $((E,\bar\partial_{h_i}+t^{-1}\theta^\dagger_{h_i}),\partial_{h_i}+t\theta_{h_i})$ parameterized by $t\in \mathbb{C}^*$, and the associated family of monodromy representations is denoted by $\rho_t^{(i)}$. It is clear that the map $t\mapsto|\rho_t^{(i)}(\gamma)|$ is continuous.  We have the bound $|\rho_i(\gamma)|\leq C$. If $|\tilde \rho_i(\gamma)|$ tends to infinity, then for any   constant $C_1>C$, there is a sequence $\{t_i\}$ which lie in a curve segment joining $\lambda^{-1}$ to 1 but not passing through 0 such that   $|\rho_{t_{i}}^{(i)}(\gamma)|=C_1$. By \cite[Theorem 1]{CS3}, the map $\rho\mapsto |\rho(\gamma)|$ from $\mathbb{M}_\mathrm{B}(X,r)$ to $\mathbb{R}$ is proper, thus we may assume
 $\{\rho_{t_{i}}^{(i)}\}$ has a limit point $\rho_\diamondsuit$, then $|\rho_\diamondsuit(\gamma)|=C_1$.  We can also  assume the sequence  $\{t_i\}$ has the limit point $t_\infty$, then $\rho_\diamondsuit(\gamma)=\rho_{t_\infty}^{(\infty)}(\gamma)$ due to the separatedness of moduli space, whose norm has a bound $C_2$. If one takes $C_1>C_2$, we will get a contradiction, which lead to the uniform bound of $\{|\tilde \rho_i(\gamma)|\}$.

 Consequently, by \cite[Corollary 6]{CS3}, the $L^2$-norms $\{||\theta_{h_i}||_{L^2}\}$ are uniformly bounded. Since the maximum norm of an eigenvalue of a holomorphic matrix is a subharmonic function, the eigenforms of $\theta_{h_i}$ are uniformly bounded in $C^0$. So far,  we prove the claim on the characteristic polynomial of
Higgs fields $\{\theta_{h_i}\}$. Therefore, \cite[Lemma 2.8]{CS4}, or \cite[Proposition 7.9]{CS6} implies that there is a harmonic bundle $((E,\bar\partial),\theta,h,\beta)$, a subsequence $\{i'\}$ and $C^\infty$-automorphisms $g_{i'}$ such that $g_{i^\prime}^*(h_{i^\prime})=h$ and $g_{i^\prime}^*(\bar\partial_{h_{i^\prime}})-\bar\partial$, $g_{i^\prime}^*(\theta_{h_{i^\prime}})-\theta$ converge to zero strongly in the operator norm for operators from $L_1^p$ to $L^p$ for $p>1$, and the frames $g_{i^\prime}^*(\beta_{i^\prime})$ converge to $\beta$. Since the $\lambda$-flat bundle can be treated as  certain $\Lambda$-module in the sense of Simpson \cite{CS8}, \cite[Theorem 5.12]{CS5} is  valid for this case, hence there is a subsequence $\{((E_{i'},d^{\prime\prime}_{E_{i^\prime}}),D^\lambda_{i^\prime},\beta_{i^\prime})\}$ converge to a point $((E, \bar\partial+\lambda\theta^\dagger_h),\lambda\partial_h+\theta,\beta)$ in $R_{\mathrm{Hod}}^\lambda(X,x,r)$.
\end{proof}

\begin{corollary}[\textbf{Simpson--Mochizuki Correspondence}, Moduli version]\label{homeo}
The Simpson--Mochizuki correspondence described in Corollary \ref{cor3.4} provides a homeomorphism of moduli spaces
 $$
 \mathbb{M}_{\mathrm{Hod}}^\lambda(X,r)\simeq \mathbb{M}_{\mathrm{Dol}}(X,r).
 $$
\end{corollary}

\begin{proof}
A key step has been  completed in the proof of the above theorem, the remaining arguments are  totally parallel to \cite[Theorem 7.18]{CS6}, so we omit them here.
\end{proof}

\section{Mochizuki Correspondence on Balanced Manifolds}\label{sec4}

In this section, we always assume $X$ is an $n$-dimensional compact balanced manifold, and $((E,\bar{\partial}_E),D^\lambda)$ is a stable $\lambda$-flat bundle of $\mathrm{rank}(E)\geq2$ over $X$ with fixed $\lambda\neq0$.  We will use the standard method of continuity to show the existence  of harmonic metric on $((E,\bar{\partial}_E),D^\lambda)$.

Let $h_0$ be a fixed Hermitian metric on $E$, then both $\bar\partial_E+\delta_{h_0}'$ and $\lambda^{-1}D^\lambda+\bar{\lambda}^{-1}\delta_{h_0}''$ are $h_0$-unitary connections, whose curvatures are given by, respectively,
\begin{equation}\label{p01}
\begin{aligned}
  R_1(h_0)&:=(\bar\partial_E+\delta_{h_0}')^2=-\frac{(1+|\lambda|^2)}{\lambda}\bigg(\widetilde{\bar\partial_{h_0}}\theta_{h_0}+\lambda[\theta_{h_0},\theta_{h_0}^\dagger]\bigg),\\
R_2(h_0)&:=(\lambda^{-1}D^\lambda+\bar{\lambda}^{-1}\delta_h'')^2=-\frac{1+|\lambda|^2}{|\lambda|^2}\bigg(\lambda\widetilde{\partial_{h_0}}\theta^\dagger_{h_0}+[\theta_{h_0},\theta_{h_0}^\dagger]\bigg).
\end{aligned}
\end{equation}
Let $S(E,h_0)\subset  C^\infty(\End(E))$ be the set that consists of $h_0$-self-adjoint endomorphisms of $E$ and $S^+(E,h_0)$ be the subset of $S(E,h_0)$ that consists of positive-definite  endomorphisms. Write $h=h_0\cdot s$ for some $s\in S^+(E,h_0)$, then
\begin{align*}
                    \delta^\prime_{h}=\delta^\prime_{h_0}+s^{-1}\widetilde{\delta^\prime_{h_0}} s, \ \ \delta^{\prime\prime}_{h}=\delta^{\prime\prime}_{h_0}+s^{-1}\widetilde{\delta^{\prime\prime}_{h_0}} s,
                  \end{align*}
and we have the corresponding curvatures
 \begin{align*}
  R_1(h)&:= R_1(h_0)+\widetilde{\bar\partial_E}(s^{-1}\widetilde{\delta^\prime_{h_0}} s),\\
R_2(h)&:= R_2(h_0)+|\lambda|^{-2}\widetilde{D^\lambda}(s^{-1}\widetilde{\delta^{\prime\prime}_{h_0}} s).
\end{align*}

 If $h$ is a harmonic metric on $((E,\bar\partial_E),{D}^\lambda)$, then $s$ must satisfy the following equation
$$
K(h_0)+\sqrt{-1}\Lambda_\omega\widetilde{\bar\partial_E}(s^{-1}\widetilde{\delta^\prime_{h_0}} s)-\sqrt{-1}\Lambda_\omega\widetilde{D^\lambda}(s^{-1}\widetilde{\delta^{\prime\prime}_{h_0}} s)=0,
$$
where $K(h_0)=\sqrt{-1}\Lambda_\omega(R_1(h_0)-|\lambda|^2R_2(h_0))$. We consider the following perturbed equation
\begin{align}\label{oz}
  \Gamma_\varepsilon(h_0,s(\varepsilon)):=K(h_0)+\sqrt{-1}\Lambda_\omega(\widetilde{\bar\partial_E}(s^{-1}(\varepsilon)\widetilde{\delta^\prime_{h_0}} s(\varepsilon))-\widetilde{D^\lambda}(s^{-1}(\varepsilon)\widetilde{\delta^{\prime\prime}_{h_0}} s(\varepsilon)))+\varepsilon\log s(\varepsilon)=0
\end{align}
for some real number $\varepsilon$. One defines the set
$$
J(h_0)=\{\varepsilon\in(0,1]:\textrm{ there exists }s(\varepsilon)\in S^+(E,h_0)\textrm{ such that }\Gamma_\varepsilon(h_0,s(\varepsilon))=0\}.
$$
Given a metric $h$ on $E$, let $h_0=h\cdot e^{K(h)}$ so that $e^{K(h)}\in S^+(E,h_0)$, then $\Gamma_1(h_0,e^{-K(h)})=0$. Namely, one can choose a Hermitian metric $h_0$ on $E$ such that $1\in J(h_0)$.
\begin{lemma}\label{bh}
$J(h_0)$ is a nonempty  open subset of $(0,1]$.
\end{lemma}
\begin{proof}For a rational number $q$, one introduces the adjoint action $\mathrm{Ad}^q_s$ on an operator $\mathcal{O}\in C^\infty(\End(E)\otimes\Lambda^\bullet(T^*X))$
as $\mathrm{Ad}^q_s\cdot\mathcal{O}:=s^q\mathcal{O}s^{-q}$, which defines a new operator $\mathrm{Ad}^q_s(\mathcal{O})=\mathrm{Ad}^q_s\cdot(\mathcal{O}\comp \mathrm{Ad}^{-q}_s):\End(E)\rightarrow\End(E)\otimes\Lambda^\bullet(T^*X)$. Then we introduce the following notations
\begin{align*}
 P_1(q,s)=\mathrm{Ad}^q_s(\widetilde{\bar\partial_E})\comp \mathrm{Ad}^{-q}_s(\widetilde{\delta^\prime_{h_0}}),&\quad  \bar P_1(q,s)=\mathrm{Ad}^q_s(\widetilde{\delta^\prime_{h_0}})\comp\mathrm{Ad}^{-q}_s(\widetilde{\bar\partial_E})\\
 P_2(q,s)=\mathrm{Ad}^q_s(\widetilde{D^\lambda})\comp\mathrm{Ad}^{-q}_s(\widetilde{\delta^{\prime\prime}_{h_0}}), &\quad
 \bar P_2(q,s)=\mathrm{Ad}^q_s(\widetilde{\delta^{\prime\prime}_{h_0}})\comp \mathrm{Ad}^{-q}_s(\widetilde{D^\lambda}).
\end{align*}
In particular, we denote $ P_1^s=P_1(\frac{1}{2},s), P_2^s=P_2(\frac{1}{2},s), \bar P_1^s=\bar P_1(\frac{1}{2},s), \bar P_2^s=\bar P_2(\frac{1}{2},s)$.

We calculate
\begin{align*}
\frac{d}{dt}\bigg|_{t=0}\widetilde{\bar\partial_E}((s+t\eta)^{-1}\widetilde{\delta^\prime_{h_0}}(s+t\eta))
&= -\widetilde{\bar\partial_E}(s^{-1}\eta s\widetilde{\delta^\prime_{h_0}}s)+\widetilde{\bar\partial_E}(s^{-1}\widetilde{\delta^\prime_{h_0}}\eta)\\
&=\widetilde{\bar\partial_E}(s^{-1}\comp \widetilde{\delta^\prime_{h_0}}(\eta s^{-1})\comp s)\\
&=\ \mathrm{Ad}^{-\frac{1}{2}}_s(P_1^s(\eta^s)),
\end{align*}
where $\eta^{\underline{s}}=s^{-\frac{1}{2}}\eta s^{-\frac{1}{2}}$. Similarly, we have
\begin{align*}
\frac{d}{dt}\bigg|_{t=0}\widetilde{D^\lambda}((s+t\eta)^{-1}\widetilde{\delta^{\prime\prime}_{h_0}}(s+t\eta))
=\mathrm{Ad}^{-\frac{1}{2}}_s(P_2^s(\eta^{\underline{s}})).
\end{align*}
Therefore, the linearization of the equation \eqref{oz} at $(\varepsilon,s(\varepsilon))$ reads
\begin{align}\label{ppp}
  L_{\varepsilon,s}(\eta):=&\ \frac{d}{dt}|_{t=0}\Gamma_\varepsilon(h_0,s+t\eta)\nonumber\\
  =&\ \sqrt{-1}\Lambda_\omega(\mathrm{Ad}^{-\frac{1}{2}}_s(P_1^s(\eta^{\underline{s}}))-\mathrm{Ad}^{-\frac{1}{2}}_s(P_2^s(\eta^{\underline{s}})))+\varepsilon s^{-1}\eta
\end{align}

Since  the connections $$d_1^s:=\mathrm{Ad}^{\frac{1}{2}}_s({\bar\partial_E})+\mathrm{Ad}^{-\frac{1}{2}}_s({\delta^\prime_{h_0}}),\
d_2^s:=\lambda^{-1}\mathrm{Ad}^{\frac{1}{2}}_s(D^\lambda)+\bar\lambda^{-1}\mathrm{Ad}^{-\frac{1}{2}}_s({\delta^{\prime\prime}_{h_0}})$$
are also $h_0$-unitary,  we have
\begin{align}\label{mkr}
  \Delta_{\bar\partial}|\eta^{\underline{s}}|^2_{h_0}&=\sqrt{-1}\Lambda_\omega\bar\partial\partial|\eta^{\underline{s}}|^2_{h_0}=-\sqrt{-1}\Lambda_\omega\partial\bar\partial|\eta^{\underline{s}}|^2_{h_0}\nonumber\\&=h_0(\sqrt{-1}\Lambda_\omega P_1^s(\eta^{\underline{s}}),\eta^{\underline{s}})+h_0(\eta^{\underline{s}}, -\sqrt{-1}\Lambda_\omega \bar P_1^s(\eta^{\underline{s}}))-|\widetilde{d_1^s}\eta^{\underline{s}}|^2_{h_0,\omega}\nonumber\\
 &=|\lambda|^{-2}h_0(-\sqrt{-1}\Lambda_\omega P_2^s(\eta^{\underline{s}}),\eta^{\underline{s}})+|\lambda|^{-2}h_0(\eta^{\underline{s}}, \sqrt{-1}\Lambda_\omega \bar P_2^s(\eta^{\underline{s}}))-|\widetilde{d_2^s}\eta^{\underline{s}}|^2_{h_0,\omega},
\end{align}
where $ \Delta_{\bar\partial}=\sqrt{-1}\Lambda_\omega\bar\partial\partial$ is the Lapacian on $C^\infty(X)$ of $\bar\partial$ with respect to $\omega$.
Note that $(P_1^s(\eta^{\underline{s}}))^\dagger_{h_0}=\bar P_1^s(\eta^{\underline{s}}), (P_2^s(\eta^{\underline{s}}))^\dagger_{h_0}=\bar P_2^s(\eta^{\underline{s}}) $.

If $ L_{\varepsilon,s}(\eta)=0$, then combining \eqref{ppp} and \eqref{mkr} together leads to
\begin{align*}
(1+|\lambda|^2)\Delta_{\bar\partial}|\eta^{\underline{s}}|^2_{h_0}+|\widetilde{d_1^s}\eta^{\underline{s}}|^2_{h_0,\omega}+|\lambda|^2|\widetilde{d_2^s}\eta^{\underline{s}}|^2_{h_0,\omega}=-2\varepsilon|\eta^{\underline{s}}|^2_{h_0}\leq0.
\end{align*}
Hence, the maximum principle implies $|\eta^{\underline{s}}|=0$, i.e. $\eta=0$, which means the linear second order elliptic differential operator $  L_{\varepsilon,s}$ on $S(E,h_0)$ is injective. Moreover, since the index of $  L_{\varepsilon,s}$ is zero, it is also surjective. If  for some $\varepsilon_0\in (0,1]$ there exists $s_{\varepsilon_0}$ such that $\Gamma_{\varepsilon_0}(h_0,s_{\varepsilon_0})=0$, then by implicit function theorem on Banach spaces and elliptic regularity, there is a $S^+(E,h_0)$-valued smooth function $s(\varepsilon)$  over a small neighborhood $U\subset (0,1]$ of $\varepsilon_0$ with $s(\varepsilon_0)=s_{\varepsilon_0}$ such that $\Gamma_{\varepsilon}(h_0,s(\varepsilon))=0$ holds  for any $\varepsilon\in U$. The lemma follows. 
\end{proof}

\begin{lemma}\label{mmb}
Assume for any $\varepsilon\in (\epsilon,1]$ with $\epsilon>0$ the equation $\Gamma_\varepsilon(h_0,s(\varepsilon))=0$ admits a solution $s(\varepsilon)\in S^+(E,h_0)$. Denote  $\chi(\varepsilon)=\frac{ds(\varepsilon)}{d\varepsilon}$ and $\mathfrak{m}_\varepsilon=\max_X|\chi(\varepsilon)|_{h_0}$, then there exists a positive constant $C(m_\varepsilon)$ such that
$$
\max_X|\log s(\varepsilon)|_{h_0}\leq C(m_\varepsilon)
$$ 
for any $\varepsilon\in (\epsilon,1]$, where  the expression $C(m_\varepsilon)$ means  this constant depends  on  $m_\varepsilon$ (and other fixed data independent of $\varepsilon$).
\end{lemma}

\begin{proof}
The equation $\Gamma_\varepsilon(h_0,s(\varepsilon))=0$ is equivalent to $\widehat{\Gamma}_\varepsilon(h_0,s(\varepsilon)):=s(\varepsilon)\Gamma_\varepsilon(h_0,s(\varepsilon))=0$, then we have
\begin{align*}
 0&=\frac{d\widehat{\Gamma}_\varepsilon(h_0,s(\varepsilon))}{d\varepsilon}\\
 &=s(\varepsilon)(\mathrm{Ad}^{-\frac{1}{2}}_s(P_1^s((\chi(\varepsilon))^{\underline{s}}))-\mathrm{Ad}^{-\frac{1}{2}}_s(P_2^s((\chi(\varepsilon))^{\underline{s}}))+\log s(\varepsilon)+\varepsilon\Phi(\varepsilon)),
\end{align*}
where $\Phi(\varepsilon)=\frac{d\log s(\varepsilon)}{d\varepsilon}$. It is known that \cite{LT}
$$
h_0(\mathrm{Ad}^{\frac{1}{2}}_s\cdot \Phi(\varepsilon),(\chi(\varepsilon))^{\underline{s}})\geq |(\chi(\varepsilon))^{\underline{s}}|^2_{h_0},
$$
combined with  \eqref{mkr} yields
\begin{align}\label{11s}
  &(1+|\lambda|^2)\Delta_{\bar\partial}|(\chi(\varepsilon))^{\underline{s}}|^2_{h_0}+|\widetilde{d_1^s}(\chi(\varepsilon))^{\underline{s}}|^2_{h_0,\omega}
  +|\lambda|^2|\widetilde{d_2^s}((\chi(\varepsilon))^{\underline{s}}|^2_{h_0,\omega}
  +2\varepsilon|(\chi(\varepsilon))^{\underline{s}}|^2_{h_0}\nonumber\\
  \leq&-2h_0(\log s(\varepsilon),(\chi(\varepsilon))^{\underline{s}})
  \leq2|\log s(\varepsilon)|_{h_0}|(\chi(\varepsilon))^{\underline{s}}|_{h_0}.
\end{align}

On the other hand, we have
\begin{align*}
 |\widetilde{d_1^s}(\chi(\varepsilon))^{\underline{s}}|^2_{h_0,\omega}&\geq |\mathrm{Ad}^{\frac{1}{2}}_s(\widetilde{{\bar\partial_E}})(\chi(\varepsilon))^{\underline{s}}|^2_{h_0,\omega}\\
 &=| \mathrm{Ad}^{\frac{1}{2}}_s\cdot (\widetilde{{\bar\partial_E}}\varphi(\varepsilon))|^2_{h_0,\omega}\geq e^{-C_0(m_\varepsilon)}|\widetilde{{\bar\partial_E}}\varphi(\varepsilon)|^2_{h_0,\omega},
\end{align*}
where $\varphi(\varepsilon)=\mathrm{Ad}^{-\frac{1}{2}}_s\cdot (\chi(\varepsilon))^{\underline{s}}$, and $C_0(m_\varepsilon)$ is a constant. Taking the integration over $X$ yields
$||\widetilde{d_1^s}(\chi(\varepsilon))^{\underline{s}}||^2_{L^2}\geq ||\widetilde{{\bar\partial_E}}\varphi(\varepsilon)||^2_{L^2}$, where $||\bullet||_{L^2}$ denotes the $L^2$-norm with respect to $h_0, \omega$. Similarly, we have
$||\widetilde{d_2^s}(\chi(\varepsilon))^{\underline{s}}||^2_{L^2}\geq e^{-C_0(m_\varepsilon)}|\lambda|^{-2}||\widetilde{D^\lambda}\varphi(\varepsilon)||^2_{L^2}$. Consequently,
\begin{align*}
 ||\widetilde{d_1^s}(\chi(\varepsilon))^{\underline{s}}||^2_{L^2}+|\lambda|^2||\widetilde{d_2^s}((\chi(\varepsilon))^{\underline{s}}||^2_{L^2}
 &\geq  e^{-C_0(m_\varepsilon)}||\widetilde{\mathbb{D}^\lambda}\varphi(\varepsilon)||^2_{L^2}\nonumber\\
&=e^{-C_0(m_\varepsilon)}\int_Xh_0(\widetilde{\Delta_{\mathbb{D}^\lambda}}\varphi(\varepsilon),\varphi(\varepsilon))\omega^n\nonumber\\
&\geq a_1e^{-C_0(m_\varepsilon)}||\varphi(\varepsilon)||^2_{L^2}
\end{align*}
where $a_1$ is the smallest eigenvalue of the Laplacian $\widetilde{\Delta^{h_0}_{\mathbb{D}^\lambda}}=\widetilde{\mathbb{D}^\lambda}^*_{h_0}\widetilde{\mathbb{D}^\lambda}$ on $C^\infty(\End(E))$ of $\mathbb{D}^\lambda$ with respect to $h_0, \omega$.
One claims $a_1>0$. If $a_1=0$, namely $\widetilde{D^\lambda}\varphi(\varepsilon)=\widetilde{\bar\partial_E}\varphi(\varepsilon)=0$, which implies $\varphi(\varepsilon)=c(\varepsilon)\mathrm{Id}$ for some constant $c(\varepsilon)$ since $((E,\bar\partial_E),D^\lambda)$ is a stable $\lambda$-flat bundle. However, due to \eqref{p01} and $\omega$ being balanced, we have $\int_X\Tr (\sqrt{-1}\Lambda_\omega R_1(h))\omega^n=\int_X\Tr (\sqrt{-1}\Lambda_\omega R_2(h))\omega^n=0$, which implies
$$
\frac{d}{d\varepsilon}\int_X\Tr(\log s(\varepsilon))\omega^n=\int_X\Tr((\chi(\varepsilon))^{\underline{s}})\omega^n=\int_X\Tr(\varphi(\varepsilon))\omega^n=0.
$$
This means that $a_1=0$ only happens for $\chi(\varepsilon)=0$, then there exists a positive constant $C_1(m_\varepsilon)$ such that
 \begin{align}\label{kll}
  ||\widetilde{d_1^s}(\chi(\varepsilon))^{\underline{s}}||^2_{L^2}+|\lambda|^2||\widetilde{d_2^s}((\chi(\varepsilon))^{\underline{s}}||^2_{L^2}\geq
 e^{-C_1(m_\varepsilon)} ||(\chi(\varepsilon))^{\underline{s}}||^2_{L^2}.
 \end{align}
 Combining \eqref{11s} and \eqref{kll} together immediately gives rise to
\begin{align}\label{mk}
 ||(\chi(\varepsilon))^{\underline{s}}||_{L^2}\leq C_2(m_\varepsilon)
\end{align}
 for some positive constant $C_2(m_\varepsilon)$.

 Again by \eqref{11s}, we have
 \begin{align*}
   (1+|\lambda|^2)\Delta_{\bar\partial}|(\chi(\varepsilon))^{\underline{s}}|^2_{h_0}\leq2|\log s(\varepsilon)|_{h_0}|(\chi(\varepsilon))^{\underline{s}}|_{h_0}\leq m_\varepsilon|(\chi(\varepsilon))^{\underline{s}}|^2_{h_0}+m_\varepsilon.
 \end{align*}
Then one can apply \cite[Lemma 3.3.2]{LT} and \eqref{mk} to obtain
\begin{align*}
 \max_X|\log s(\varepsilon)|_{h_0}&\leq C_3(m_\varepsilon) \max_X|(\chi(\varepsilon))^{\underline{s}}|^2_{h_0}\\
 &\leq C_4(m_\varepsilon)(||(\chi(\varepsilon))^{\underline{s}}||^2_{L^2}+m_\varepsilon)\leq C_5(m_\varepsilon)
\end{align*}
where $ C_3(m_\varepsilon),  C_4(m_\varepsilon), C_5(m_\varepsilon)$ are positive constants. 
\end{proof}

\begin{lemma}\label{mmmm}
With the same setting as in Lemma \ref{mmb} we have
\begin{enumerate}
  \item $m_{\varepsilon}\leq \frac{1}{\epsilon}\max_X|K(h_0)|_{h_0}$
  \item $m_{\varepsilon}\leq \frac{C}{1+|\lambda|^2}\Big(||\log s(\varepsilon)||_{L_2}+\max_X|K(h_0)|_{h_0}\Big)^2$, where $C$ is a positive constant independent of $\varepsilon$.
\end{enumerate}
\end{lemma}

\begin{proof}
From $\Gamma_\varepsilon(h_0,s(\varepsilon))=0$ it follows that
\begin{align*}
  &|K(h_0)|_{h_0}|\log s(\varepsilon)|_{h_0}\\
  \geq&\  h_0(\sqrt{-1}\Lambda_\omega(\widetilde{\bar\partial_E}(s^{-1}(\varepsilon)\widetilde{\delta^\prime_{h_0}} s(\varepsilon))-\widetilde{D^\lambda}(s^{-1}(\varepsilon)\widetilde{\delta^{\prime\prime}_{h_0}} s(\varepsilon))),\log s(\varepsilon))+\varepsilon|\log s(\varepsilon)|^2_{h_0}
\end{align*}
By the same approach as in the proof of \cite[Lemma 3.3.4]{LT} , one can show that
\begin{align*}
 h_0(\sqrt{-1}\Lambda_\omega(\widetilde{\bar\partial_E}(s^{-1}(\varepsilon)\widetilde{\delta^\prime_{h_0}} s(\varepsilon)),\log s(\varepsilon))&\geq \frac{1}{2}\sqrt{-1}\Lambda_\omega\bar\partial\partial|\log s(\varepsilon)|_{h_0}^2,\\
 -h_0(\sqrt{-1}\Lambda_\omega(\widetilde{D^\lambda}(s^{-1}(\varepsilon)\widetilde{\delta^{\prime\prime}_{h_0}} s(\varepsilon)),\log s(\varepsilon))&\geq \frac{|\lambda|^2}{2}\sqrt{-1}\Lambda_\omega\bar\partial\partial|\log s(\varepsilon)|_{h_0}^2.
\end{align*}
Therefore, we arrive at
\begin{align}\label{c1v}
 \frac{1+|\lambda|^2}{2}\Delta_{\bar\partial}|\log s(\varepsilon)|_{h_0}^2+\varepsilon|\log s(\varepsilon)|^2_{h_0}\leq |K(h_0)|_{h_0}|\log s(\varepsilon)|_{h_0},
\end{align}
which gives the first estimate  in the lemma. The above inequality \eqref{c1v} also implies
\begin{align*}
 \Delta_{\bar\partial}|\log s(\varepsilon)|_{h_0}^2\leq \frac{2}{1+|\lambda|^2}|K(h_0)|_{k_0}|\log s(\varepsilon)|_{h_0}\leq\frac{1}{1+|\lambda|^2}\Big(|\log s(\varepsilon)|^2_{h_0}+\max_X|K(h_0)|^2_{h_0}\Big).
\end{align*}
Again by \cite[Lemma 3.3.2]{LT}, we get the second estimate  in the lemma. 
\end{proof}

 \begin{lemma}\label{gh}
The setting is the same as in Lemma \ref{mmb}. For all $p>1$ and $\varepsilon\in(\epsilon,1]$, there exists  positive constants $C, C'$ indepent of $\varepsilon$ such that
\begin{enumerate}
  \item $\|\chi(\varepsilon)\|_{L_2^p}\leq C(1+\|s(\varepsilon)\|_{L_2^p})$,
  \item $\|s(\varepsilon)\|_{L_2^p}\leq C^\prime$.
\end{enumerate}
\end{lemma}
\begin{proof}(1) We define the Laplacians as follows \begin{align*}
 \Delta_1^{h_0}&:= (\bar\partial_E+\delta_{h_0}')^*_{h_0,\omega}\comp(\bar\partial_E+\delta_{h_0}')+(\bar\partial_E+\delta_{h_0}')\comp(\bar\partial_E+\delta_{h_0}')^*_{h_0,\omega},\\
\Delta_2^{h_0}&:= (\lambda^{-1}D^\lambda+\bar{\lambda}^{-1}\delta_{h_0}'')^*_{h_0,\omega}\comp(\lambda^{-1}D^\lambda+\bar{\lambda}^{-1}\delta_{h_0}'')+(\lambda^{-1}D^\lambda+\bar{\lambda}^{-1}\delta_{h_0}'')\comp(\lambda^{-1}D^\lambda+\bar{\lambda}^{-1}\delta_{h_0}'')^*_{h_0,\omega}.
\end{align*}
then for any $\Xi\in C^\infty(\End(E))$,  we have
\begin{align*}
\widetilde{\Delta_1^{h_0}}\Xi&=2\sqrt{-1}\Lambda_\omega\widetilde{\bar\partial_E}\widetilde{\delta_{h_0}'}\Xi-[\sqrt{-1}\Lambda_\omega R_1({h_0}),\Xi],\\
\widetilde{\Delta_2^{h_0}}\Xi&=-2|\lambda|^{-2}2\sqrt{-1}\Lambda_\omega \widetilde{D^\lambda}\widetilde{\delta_{h_0}''}\Xi+[\sqrt{-1}\Lambda_\omega R_2({h_0}),\Xi].
\end{align*}
The identity $\frac{d\Gamma_\varepsilon(h_0,s(\varepsilon))}{d\varepsilon}=0$ gives rise to
\begin{align}\label{lml}
\widetilde{\Delta_1^{h_0}}\chi(\varepsilon)+|\lambda|^2\widetilde{\Delta_2^{h_0}}\chi(\varepsilon)\nonumber
=& -[K(h_0),\chi(\varepsilon)])
+ 2\sqrt{-1}\Lambda_\omega[\chi(\varepsilon)\comp s^{-1}(\varepsilon)\comp\widetilde{D^\lambda}s(\varepsilon)\comp s^{-1}(\varepsilon)\comp\widetilde{\delta_{h_0}''}s(\varepsilon)\nonumber\\
 &-\widetilde{D^\lambda}\chi(\varepsilon)\comp s^{-1}(\varepsilon)\comp\widetilde{\delta_{h_0}''}s(\varepsilon)
+\widetilde{D^\lambda}s(\varepsilon)\comp s^{-1}(\varepsilon)\comp\chi(\varepsilon)\comp s^{-1}(\varepsilon)\comp\widetilde{\delta_{h_0}''}s(\varepsilon)\nonumber\\
&-\widetilde{D^\lambda}s(\varepsilon)\comp s^{-1}(\varepsilon)\comp\widetilde{\delta_{h_0}''}\chi(\varepsilon)
-\chi(\varepsilon)\comp s^{-1}(\varepsilon)\comp\widetilde{D^\lambda}\widetilde{\delta_{h_0}''}s(\varepsilon)\nonumber\\
&-
\chi(\varepsilon)\comp s^{-1}(\varepsilon)\comp\widetilde{\bar\partial_E}s(\varepsilon)\comp s^{-1}(\varepsilon)\comp\widetilde{\delta_{h_0}'}s(\varepsilon)+\widetilde{\bar\partial_E}\chi(\varepsilon)\comp s^{-1}(\varepsilon)\comp\widetilde{\delta_{h_0}'}s(\varepsilon)\nonumber\\
&-\widetilde{\bar\partial_E}s(\varepsilon)\comp s^{-1}(\varepsilon)\comp\chi(\varepsilon)\comp s^{-1}(\varepsilon)\comp\widetilde{\delta_{h_0}'}s(\varepsilon)+\widetilde{\bar\partial_E}s(\varepsilon)\comp s^{-1}(\varepsilon)\comp\widetilde{\delta_{h_0}'}\chi(\varepsilon)\nonumber\\
&+\chi(\varepsilon)\comp s^{-1}(\varepsilon)\comp\widetilde{\bar\partial_E}\widetilde{\delta_{h_0}'}s(\varepsilon)]
+\varepsilon s(\varepsilon)\log s(\varepsilon)+s(\varepsilon)\Phi(\varepsilon).
\end{align}
The $L^p$-norms of all  terms on the right hand side of \eqref{lml} can be estimated, for example
\begin{align*}
 ||[K(h_0),\chi(\varepsilon)]||_{L^p}&\leq C_6||\chi(\varepsilon)||_{L^p}\leq C_7(m_\varepsilon),\\
 \|\sqrt{-1}\Lambda_\omega(\chi(\varepsilon)\comp s^{-1}(\varepsilon)\comp\widetilde{D^\lambda}s(\varepsilon)\comp s^{-1}(\varepsilon)\comp\widetilde{\delta_{h_0}''}s(\varepsilon))\|_{L^p}
&\leq C_8(m_\varepsilon)\|\widetilde{D^\lambda}s(\varepsilon)\comp\widetilde{\delta_h''}s(\varepsilon)\|_{L^p}\\
&\leq C_{8}(m_\varepsilon)\|s(\varepsilon)\|_{L_1^{2p}}^2,\\
\|\sqrt{-1}\Lambda_\omega(\widetilde{D^\lambda}\chi(\varepsilon)\comp s^{-1}(\varepsilon)\comp\widetilde{\delta_{h_0}''}s(\varepsilon))\Big\|_{L^p}
&\leq C_{9}(m_\varepsilon)\|\widetilde{D^\lambda}\chi(\varepsilon)\comp\widetilde{\delta_h''}s(\varepsilon)\|_{L^p}\\
&\leq C_{9}(m_\varepsilon)\|\chi(\varepsilon)\|_{L_1^{2p}}\cdot\|s(\varepsilon)\|_{L_1^{2p}},\\
||\sqrt{-1}\Lambda_\omega(\chi(\varepsilon)\comp s^{-1}(\varepsilon)\comp\widetilde{D^\lambda}\widetilde{\delta_{h_0}''}s(\varepsilon))||_{L^p}&\leq C_{10}(m_\varepsilon)||s(\varepsilon)||_{L_2^p},\\
||s(\varepsilon)\Phi(\varepsilon)||_{L^p}&\leq C_{11}(m_\varepsilon)||\chi(\varepsilon)||_{L^p}\leq C_{12}(m_\varepsilon),
\end{align*}
where we frequently use Lemma \ref{mmb} and H\"{o}lder inequality.

Since the operator
$
\Delta^{h_0}_1+|\lambda|^2\Delta^{h_0}_2+\mathrm{id}: L_2^p\longrightarrow L^p
$
is self-adjoint and has strictly positive spectrum, there exists a positive constant $C_{12}$ such that
\begin{align*}
\|\chi(\varepsilon)\|_{L_2^p}
\leq C_{12}(\|\chi(\varepsilon)\|_{L^p}+\|(\widetilde{\Delta^{h_0}_1}+|\lambda|^2\widetilde{\Delta^{h_0}_2})\chi(\varepsilon)\|_{L^p}).
\end{align*}
Then due to Lemma \ref{mmmm} (1), we get
\begin{align*}
\|\chi(\varepsilon)\|_{L_2^p}
&\leq C_{13}(1+\|s(\varepsilon)\|_{L_1^{2p}}^2+\|s(\varepsilon)\|_{L_2^p}+\|\chi(\varepsilon)\|_{L_1^{2p}}\|s(\varepsilon)\|_{L_1^{2p}})\\
&\leq C_{14}(1+\|s(\varepsilon)\|_{L_2^p}).
\end{align*}

(2) By the approach applied to the proof of \cite[Proposition 3.3.5 ii)]{LT}, we deduce the inequality
$$\|s(\varepsilon)\|_{L_2^p}\leq e^{C(1-t)}(1+\|s(1)\|_{L_2^p})$$ from (1).
This immediately implies (2).
\end{proof}

\begin{lemma}\
\begin{enumerate}
  \item $J=(0,1]$.
  \item If there is a positive constant $C$ such that $||s(\varepsilon)||_{L^2}\leq C$ for all $\varepsilon\in(0,1]$, then there exists a solution $s(0)$ of the equation $\Gamma_0(h_0,s(0))=0$.
\end{enumerate}
\end{lemma}

\begin{proof}
(1) Thanks to Lemma \ref{bh}, to show (1) it suffices to prove  $J$ is a closed subset of $(0,1]$. By Lemma \ref{gh} (2), $s(\varepsilon)$ is  uniformly bounded in $L_2^p(S^+(E,h_0))$ for  $\varepsilon\in(\epsilon,1]$, thus
there is a sequence $\{\varepsilon_i\in(\epsilon,1]\}_{i\in\mathbb{N}}$ converges to $\epsilon$ such that the sequence $\{s(\varepsilon_i)\}_{i\in\mathbb{N}}$ converges weakly to $s(\epsilon)\in L_2^p(S^+(E,h_0))$.
Since the Sobolev embedding $L_2^p\hookrightarrow L_1^p$ is compact, we may assume $\{s(\varepsilon_i)\}_{i\in\mathbb{N}}$ converges strongly to $s(\epsilon)$ in $L_1^p(S^+(E,h_0))$. Some rather standard arguments (cf. the proof of \cite[Proposition 3.3.6]{LT}) show that $s(\epsilon)$ is actually differentiable and satisfies $L_\epsilon(h_0,s(\epsilon))=0$.

(2) If $||s(\varepsilon)||_{L^2}\leq C$ for all $\varepsilon\in(0,1]$, then by Lemma \ref{mmmm} (2) and  and Lemma \ref{gh}
(2), $||s(\varepsilon)||_{L^p_2}$ is uniformly bounded on $(0,1]$, then same argument as in (1) shows (2). 
\end{proof}

\begin{lemma}\label{plo}
The setting is the same as in Lemma \ref{mmb}.  There is a positive constant $C$ independent of $\varepsilon$ such that
$$
\max_X|s(\varepsilon)|_{h_0}\leq C||s(\varepsilon)||_{L^1}.
$$
\end{lemma}

\begin{proof} 
Again by $\Gamma_\varepsilon(h_0,s(\varepsilon))=0$ we have
\begin{align*}
0=&\ h_0(K(h_0),s(\varepsilon))+\sqrt{-1}\Lambda_\omega \widetilde{\bar\partial} h_0(s^{-1}(\varepsilon)\widetilde{\delta^\prime_{h_0}} s(\varepsilon),s(\varepsilon))+\sqrt{-1}\Lambda_\omega h_0(s^{-1}(\varepsilon)\widetilde{\delta^\prime_{h_0}} s(\varepsilon),\widetilde{\delta^\prime_{h_0}} s(\varepsilon))\\
&-\sqrt{-1}\Lambda_\omega \lambda\partial h_0(s^{-1}(\varepsilon)\widetilde{\delta^{\prime\prime}_{h_0}} s(\varepsilon),s(\varepsilon))-
\sqrt{-1}\Lambda_\omega  h_0(s^{-1}(\varepsilon)\widetilde{\delta^{\prime\prime}_{h_0}} s(\varepsilon),\widetilde{\delta^{\prime\prime}_{h_0}}s(\varepsilon))+\varepsilon h_0(\log s(\varepsilon),s(\varepsilon)).
\end{align*}
Then by means of the identities
\begin{align*}
  \sqrt{-1}\Lambda_\omega \widetilde{\bar\partial} h_0(s^{-1}(\varepsilon)\widetilde{\delta^\prime_{h_0}} s(\varepsilon),s(\varepsilon))&=\Delta_{\bar\partial}\Tr s(\varepsilon),\\
  \sqrt{-1}\Lambda_\omega \lambda\partial h_0(s^{-1}(\varepsilon)\widetilde{\delta^{\prime\prime}_{h_0}} s(\varepsilon),s(\varepsilon))&=-|\lambda|^2\Delta_{\bar\partial}\Tr s(\varepsilon),
\end{align*}
and the inequalities
\begin{align*}
 \sqrt{-1}\Lambda_\omega h_0(s^{-1}(\varepsilon)\widetilde{\delta^\prime_{h_0}} s(\varepsilon),\widetilde{\delta^\prime_{h_0}} s(\varepsilon))&\geq |s^{-\frac{1}{2}}(\varepsilon)\widetilde{\delta^\prime_{h_0}} s(\varepsilon)|^2_{h_0},\\
 -
\sqrt{-1}\Lambda_\omega  h_0(s^{-1}(\varepsilon)\widetilde{\delta^{\prime\prime}_{h_0}} s(\varepsilon),\widetilde{\delta^{\prime\prime}_{h_0}}s(\varepsilon))&\geq |s^{-\frac{1}{2}}(\varepsilon)\widetilde{\delta^{\prime\prime}_{h_0}}s(\varepsilon)|^2_{h_0},
\end{align*}
we get
\begin{align*}
   \Delta_{\bar\partial}\Tr s(\varepsilon)&\leq -\frac{1 }{1+|\lambda|^2}(\varepsilon h_0(\log s(\varepsilon),s(\varepsilon))+h_0(K(h_0),s(\varepsilon)))\\
&\leq \frac{2 }{1+|\lambda|^2}\max_X|K(h_0)|_{h_0}  |s(\varepsilon)|_{h_0}\leq C_{15}\Tr s(\varepsilon)
\end{align*}
for some positive constant $C_{15}$, where we have applied Lemma \ref{mmmm} (1) for the third inequality. Note that $\max_X|s(\varepsilon)|_{h_0}\leq C_{16} \max_X\Tr s(\varepsilon)$ for some positive constant $C_{16}$, then applying \cite[Lemma 3.3.2]{LT} once again gives rise to the lemma.
\end{proof}

Assume $s(\varepsilon)$ is a solution to $\Gamma_\varepsilon(h_0,s(\varepsilon))=0$ for some $\varepsilon>0$. For $x\in X$, let $e(\varepsilon,x)$ be the largest eigenvalue of $\log s(\varepsilon)|_x$, then one defines $E(\varepsilon)=\max_Xe(\varepsilon,x)$, $\rho(\varepsilon)=e^{-E(\varepsilon)}$ and $S(\varepsilon)=\rho(\varepsilon)s(\varepsilon)$.

\begin{lemma}\label{jm}
The setting is the same as in Lemma \ref{mmb}. If $\lim\limits_{\varepsilon\rightarrow0}||s(\varepsilon)||_{L^2}=\infty$, then there exists a sequence $\{\varepsilon_i\rightarrow0\}$ such that $\rho(\varepsilon_i)\rightarrow 0$ and $S(\varepsilon_i)$ converges weakly in $L_1^2$-norm to $S_\infty\neq 0$.
\end{lemma}

\begin{proof}
By definition, $S(\varepsilon)\leq \mathrm{Id}_E$ and $\max_X(\rho(\varepsilon)|s(\varepsilon)|_{h_0})\geq 1$, then by Lemma \ref{plo}    we get
\begin{align*}
  1\leq \max_X(\rho(\varepsilon)|s(\varepsilon)|_{h_0})\leq C_{17}\rho(\varepsilon)||s(\varepsilon)||_{L^1}\leq C_{18}||S(\varepsilon)||_{L^2}\leq C_{19}
\end{align*} for some positive constants $C_{17}, C_{18}, C_{19}$. Firstly, from  $1\leq C_{18}||S(\varepsilon)||_{L^2}$ it follows that if $S(\varepsilon_i)$ converges to $S_\infty$ weakly in $L_1^2$-norm then  $S_\infty\neq 0$. Secondly, by $||S(\varepsilon)||_{L^2}\leq C_{19}$ we see that in order to obtain the $L_1^2$-bound for $S(\varepsilon_i)$ we only need to estimate $||(\widetilde{\bar\partial}+\widetilde{\delta^\prime_{h_0}})S(\varepsilon_i)||_{L^2}$. The same calculations as  in the proof of Lemma \ref{plo} show that \begin{align*}
                  ||(\widetilde{\bar\partial}+\widetilde{\delta^\prime_{h_0}})S(\varepsilon_i)||_{L^2}&\leq  2 ||\widetilde{\delta^\prime_{h_0}}S(\varepsilon)||_{L^2}  \leq    2(||S^{-\frac{1}{2}}(\varepsilon)\widetilde{\delta^{\prime}_{h_0}}S(\varepsilon)||_{L^2} +   ||S^{-\frac{1}{2}}(\varepsilon)\widetilde{\delta^{\prime\prime}_{h_0}}S(\varepsilon)||_{L^2})\\
                    &\leq 4\max_X|K(h_0)|_{h_0}\int_X  |S(\varepsilon)|_{h_0}\omega^n\leq C_{19}
                            \end{align*}
for some positive constant $C_{20}$, i.e. $S(\varepsilon)$ is  uniformly bounded in $L_1^2(S^+(E,h_0))$, which implies  $S(\varepsilon_i)$ converges weakly in $L_1^2$-norm.
\end{proof}

We have shown that there  is a sequence $\{\varepsilon_i\rightarrow 0\}$  such that  $S(\varepsilon_i)$ converges weakly to a nonzero $L_1^2$-endomorphism  $S_\infty$. Similarly, for $0< \varsigma\leq 1$, there is a sequence $\{\varepsilon_i\rightarrow0\}$ such that  $S^\varsigma(\varepsilon_i)$ converges weakly to a nonzero $L_1^2$-endomorphism  $S^\varsigma_\infty$, and there is a sequence $\{\varsigma_i\rightarrow0\}$ such that $S^{\varsigma_i}_\infty\rightarrow S^0_\infty$ weakly in $L_1^2$. Then one introduces an $L_1^2$-endomorphism $\Theta=\mathrm{Id}_E-S^0_\infty$.

\begin{lemma}\label{as}
$\Theta$ satisfies the following identities in $L^1$:
\begin{enumerate}
  \item $\Theta^2=\Theta=\Theta^\dagger_{h_0}$,
  \item $(\mathrm{Id}_E-\Theta)\comp\widetilde{\bar\partial}\Theta=(\mathrm{Id}_E-\Theta)\comp\widetilde{D^\lambda}\Theta=0$.
\end{enumerate}
Therefore, $\Theta$ defines a proper  $\lambda$-flat coherent subsheaf  $\mathcal{F}$ of $E$ with $0<\rank(\mathcal{F})<\rank(E)$.
\end{lemma}
\begin{proof}(1) is obvious. For (2), it suffices to show
\begin{align*}
  |\Theta\comp \widetilde{\delta^\prime_{h_0}}(\mathrm{Id}_E-\Theta)|_{h_0}=|\Theta\comp \widetilde{\delta''_{h_0}}(\mathrm{Id}_E-\Theta)|_{h_0}=0.
\end{align*}
Indeed, by the same arguments as in the proof of \cite[Proposition 3.4.6 iii)]{LT}, we have
\begin{align*}
 &||\Theta\comp \widetilde{\delta^\prime_{h_0}}(\mathrm{Id}_E-\Theta)||_{L^2}+||\Theta\comp \widetilde{\delta''_{h_0}}(\mathrm{Id}_E-\Theta)||_{L^2}\\
 =&\lim\limits_{\frac{\varsigma_k}{2}\geq\varsigma_j\rightarrow0}\lim\limits_{2\geq\varsigma_k\rightarrow0}\lim\limits_{\varepsilon_i\rightarrow0}
 (||(\mathrm{Id}_E-S^{\varsigma_j}(\varepsilon_i))\comp \widetilde{\delta^\prime_{h_0}}S^{\varsigma_k}(\varepsilon_i)||_{L^2}
+ ||(\mathrm{Id}_E-S^{\varsigma_j}(\varepsilon_i))\comp \widetilde{\delta''_{h_0}}S^{\varsigma_k}(\varepsilon_i)||_{L^2})\\
\leq&\lim\limits_{\frac{\varsigma_k}{2}\geq\varsigma_j\rightarrow0}\lim\limits_{2\geq\varsigma_k\rightarrow0}\lim\limits_{\varepsilon_i\rightarrow0}
(\frac{2\varsigma_j}{2\varsigma_j+\varsigma_k})^2(||S^{-\frac{\varsigma_k}{2}}(\varepsilon_i))\comp \widetilde{\delta^\prime_{h_0}}S^{\varsigma_k}(\varepsilon_i)||_{L^2}+||S^{-\frac{\varsigma_k}{2}}(\varepsilon_i))\comp \widetilde{\delta''_{h_0}}S^{\varsigma_k}(\varepsilon_i)||_{L^2})\\
\leq &\lim\limits_{\frac{\varsigma_k}{2}\geq\varsigma_j\rightarrow0}\lim\limits_{2\geq\varsigma_k\rightarrow0}
C_{21}(\frac{2\varsigma_j}{2\varsigma_j+\varsigma_k})^2\max_X|K(h_0)|_{h_0}\\
=&\ 0
\end{align*}
 for some positive constant $C_{21}$, which leads to (2). The existence of $\lambda$-flat coherent sheaf $\mathcal{F}$ defined via $\Theta$ is just an application of   a classical result due to  Uhlenbeck and Yau \cite{UY}. The nonvanishing of $S_\infty$ guarantees $\rank (\mathcal{F})<\rank (E)$, and the identity $\int_X\log(\det s(\varepsilon))\omega^n=0$ implies $0<\rank (\mathcal{F})$ (cf. the proof of \cite[Proposition 3.13 (3)]{HH2}). 
\end{proof}

\begin{lemma}
$\mathcal{F}$ is defined as in  Lemma \ref{as}, then $\deg(\mathcal{F})\geq 0$.
\end{lemma}

\begin{proof}
 There is an analytic subset $S\subset X$ with $\mathrm{codim}_XS\geq 2$ such that $\mathcal{F}|_{X\backslash S}$ is a holomorphic subbundle of $E|_{X\backslash S}$ and $\Theta|_{X\backslash S}$ is $C^\infty$ \cite{UY}. Since
\begin{align*}
\int_{X\backslash S}\Tr(\sqrt{-1}\Lambda_\omega R_1(h_0|_{\mathcal{F}})) \omega^n&=\frac{1}{1+|\lambda|^2} \int_{X\backslash S}\Tr(\sqrt{-1}\Lambda_\omega G(h_0|_{\mathcal{F}}, \mathbb{D}^\lambda|_{\mathcal{F}}))\omega^n\\
 &=\frac{1}{1+|\lambda|^2}\int_{X\backslash S}(\Tr(K(h_0)\comp\Theta)-|\widetilde{\delta'_{h_0}}\Theta|^2_{h_0}-|\widetilde{\delta''_{h_0}}\Theta|^2_{h_0})\omega^n,
\end{align*}
we only need to show
\begin{align}\label{my}
\int_{X\backslash S}\Tr(K(h_0)\comp\Theta)\omega^n\geq \int_{X\backslash S}|\widetilde{\delta'_{h_0}}\Theta|^2_{h_0}+|\widetilde{\delta''_{h_0}}\Theta|^2_{h_0})\omega^n.
\end{align}
By the identities   $\int_X\Tr(K(h_0))\omega^n=0$ and $\Theta=\lim\limits_{1\leq\varsigma_i\rightarrow0}\lim\limits_{\varepsilon_i\rightarrow0}(\mathrm{Id}_E-S^{\varsigma_i}(\sigma_i))$ (strongly in $L^2$), we have
\begin{align*}
 \int_{X\backslash S}\Tr(K(h_0)\comp\Theta)\omega^n=-\lim\limits_{1\leq\varsigma_i\rightarrow0}\lim\limits_{\varepsilon_i\rightarrow0}\int_{X}\Tr(K(h_0)\comp S^{\varsigma_i}(\varepsilon_i))\omega^n.
\end{align*}
A calculation of reuse shows that
\begin{align*}
 &\int_{X}\Tr(K(h_0)\comp S^{\varsigma_i}(\varepsilon_i))\omega^n\\
 =&
 -\int_{X}\sqrt{-1}\Lambda_\omega (h_0(S^{-1}(\varepsilon_i)\widetilde{\delta^\prime_{h_0}} S(\varepsilon_i),\widetilde{\delta^\prime_{h_0}} S^{\varsigma_i}(\varepsilon_i))-h_0(S^{-1}(\varepsilon_i)\widetilde{\delta''_{h_0}} S(\varepsilon_i),\widetilde{\delta''_{h_0}} S^{\varsigma_i}(\varepsilon_i)))\omega^n\\
 \leq &-||S^{-\frac{\varsigma_i}{2}}(\varepsilon_i)\widetilde{\delta^\prime_{h_0}} S^{\varsigma_i}(\varepsilon_i)||_{L^2}-||S^{-\frac{\varsigma_i}{2}}(\varepsilon_i)\widetilde{\delta''_{h_0}} S^{\varsigma_i}(\varepsilon_i)||_{L^2}\\
 \leq &-||\widetilde{\delta^\prime_{h_0}} (\mathrm{Id}_E-S^{\varsigma_i}(\varepsilon_i))||_{L^2}-||\widetilde{\delta''_{h_0}} (\mathrm{Id}_E-S^{\varsigma_i}(\varepsilon_i))||_{L^2},
\end{align*}
where we have also used the assumption that  $X$ being a balanced manifold and the inequality $\int_X\Tr(\log S(\varepsilon_i)\comp S^{\varsigma_i}(\varepsilon_i))\omega^n\geq0$. This immediately gives rise to the desired inequality \eqref{my}. 
\end{proof}

\begin{lemma}
If both $h_1, h_2$ are harmonic metrics on $((E,\bar\partial_E),{D}^\lambda)$, then there is a positive constant $C$ such that $h_2=Ch_1$.
\end{lemma}

\begin{proof}
Write $h_2=h_1 s$ with $s\in S^+(E,h_1) $, then from the proof of Lemma \ref{plo} we see that
\begin{align*}
  (1+|\lambda|^2)\Delta_{\bar\partial}\Tr s+|s^{-\frac{1}{2}}(\varepsilon)\widetilde{\delta^\prime_{h_1}} s|^2_{h_1}+|s^{-\frac{1}{2}}(\varepsilon)\widetilde{\delta''_{h_1}} s|^2_{h_1}\leq 0,
\end{align*}
which implies $\widetilde{\bar\partial_E}s=\widetilde{D^\lambda}s=0$. Since $((E,\bar\partial_E),{D}^\lambda)$ is a stable $\lambda$-flat bundle, this makes $s=C\cdot\mathrm{Id}_E$ for some positive constant $C$. 
\end{proof}

In conclusion, we achieve the following theorem:

\begin{theorem} \label{pluri-har}
Let $X$ be a balanced manifold, and  $((E,\bar\partial_E),{D}^\lambda)$ be a stable $\lambda$-flat bundle over $X$ ($\lambda\neq 0$), then there is a unique harmonic metric on $((E,\bar\partial_E),{D}^\lambda)$ up to constant scalars.

\end{theorem}

\section{Dynamical Systems on Dolbeault Moduli Spaces}\label{sec5}

\subsection{Construction}

In this section, $X$ is assumed to be a complex projective manifold.
 For any $t\in\mathbb{C}^*$, the $\mathbb{C}^*$-action on $M_{\rm Dol}(X,r)$ is given by:
\begin{align*}
t: M_{\mathrm{Dol}}(X,r)&\longrightarrow M_{\mathrm{Dol}}(X,r)\\
((E,\bar\partial_E),\theta)&\longmapsto((E,\bar\partial_E),t\theta),
\end{align*}
which plays a crucial role in studying the moduli space. Due to the Simpson--Mochizuki correspondence, we can construct  a new  action on $M_{\mathrm{Dol}}(X,r)$ as follows. Fixing some $(\lambda,t) \in\mathbb{C}\times\mathbb{C}^*$, for any stable  Higgs bundle $((E,\bar{\partial}_E),\theta)\in M_{\mathrm{Dol}}(X,r)$ with a pluri-harmonic metric $h$, we have a stable  $\lambda$-flat bundle $((E,d^{\prime\prime}_E=\bar{\partial}_E+\lambda\theta_h^\dagger),D^\lambda=\lambda\partial_{E,h}+\theta)\in M^\lambda_{\mathrm{Hod}}(X,r)$, and a stable  $\lambda^\prime$-flat bundle $((E,d^{\prime\prime}_E),D^{\lambda^\prime}=t\lambda\partial_{E,h}+t\theta)\in M^{\lambda^\prime}_{\mathrm{Hod}}(X,r)$ for $\lambda^\prime =t\lambda$, the latter one admits a pluri-harmonic metric $h_t$, which gives rise to a stable
 Higgs bundle $((E,\bar\partial_{E,h_t}),\theta_{h_t})\in M_{\mathrm{Dol}}(X,r)$ by the  Simpson--Mochizuki correspondence. We conclude the above process in the following:
 \begin{align*}
 M_{\mathrm{Dol}}(X,r)&\xrightarrow{\hspace*{1.7cm}} M_{\rm Hod}^\lambda(X,r)\xrightarrow{\hspace*{3cm}} M_{\rm Hod}^{\lambda'}(X,r)\xrightarrow{\hspace*{2.1cm}} M_{\rm Dol}(X,r)\\
((E,\bar\partial_E),\theta)&\longmapsto((E,\bar{\partial}_E+\lambda\theta_h^\dagger),\lambda\partial_{E,h}+\theta)\longmapsto((E,\bar{\partial}_E+\lambda\theta_h^\dagger),t\lambda\partial_{E,h}+t\theta))\longmapsto((E,\bar\partial_{E,h_t}),\theta_{h_t}).
 \end{align*}
 As a summary, the Simpson--Mochizuki correspondence provides a dynamical system (i.e. a smooth  self-map) with two-parameters on the Dolbeault moduli space $M_{\mathrm{Dol}}(X,r)$
 \begin{align*}
   \psi_{(\lambda,t)}:M_{\mathrm{Dol}}(X,r)&\longrightarrow M_{\mathrm{Dol}}(X,r)\\
((E,\bar{\partial}_E),\theta)&\longmapsto((E,\bar\partial_{E,h_t}),\theta_{h_t}),
 \end{align*}
 and we also call it  the $(\lambda,t)$-action.  
 
\begin{remark}\
\begin{enumerate}
\item Clearly,  $\psi_{(\lambda,t)}$  can also be defined on  $\mathbb{M}_{\mathrm{Dol}}(X,r)$ as a continuous self-map.
\item The similar construction proceeding from $\mathbb{M}_{\mathrm{dR}}(X,r)$ provides a dynamical system on $\mathbb{M}_{\mathrm{dR}}(X,r)$.
\end{enumerate}
\end{remark} 
 
The following several facts are very obvious.

\begin{proposition}\
 \begin{enumerate}
\item $\psi_{(0,t)}$ is the usual $\mathbb{C}^*$-action by $t$ on $M_{\mathrm{Dol}}(X,r)$, and $\psi_{(\lambda,1)}$ is the identity morphism,
  \item  $\psi_{(\lambda t_1,t_2)}\circ \psi_{(\lambda,t_1)}=\psi_{(\lambda,t_1t_2)}$,
  \item A stable vector bundle (with zero Higgs field) with vanishing Chern classes is a  fixed point of $\psi_{(\lambda,t)}$ for any $\lambda\in\mathbb{C},t\in \mathbb{C}^*$.
\end{enumerate}
 \end{proposition}

\subsection{The First Variation}

Let $u=((E,\bar\partial_E),\theta)\in M_{\mathrm{Dol}}(X,r)$ with the pluri-harmonic metric $h$, the tangent space of $M_{\mathrm{Dol}}(X,r)$ at $u$ is given by the hypercohomology $\mathbb{H}^1(\End(E),\tilde\theta)$ of Higgs complex \cite{CS4}
$$
\End(E)\xrightarrow{\tilde\theta\wedge }\End(E)\otimes_{\mathcal{O}_X}\Omega^1_X\xrightarrow{\tilde \theta\wedge }\cdots.
$$
By K\"{a}hler identities, there is an isomorphism
\begin{align*}
  \mathbb{H}^1(\End(E),\tilde\theta)\simeq \mathcal{H}^1(E,\theta)
  :=&\{(\alpha,\beta)\in\Omega^{0,1}_X(\End(E))\oplus\Omega^{1,0}_X(\End(E))\\&\ \ \ :(\widetilde{\partial_{E,h}}+\widetilde{\theta^\dagger_h})(\alpha+\beta)=(\widetilde{\bar\partial_E}+\tilde\theta)(\alpha+\beta)=0\}.
\end{align*}

\begin{definition}[\cite{HH1}] 
The pair $(\alpha,\beta)\in\mathcal{H}^1(E,\theta)$ is called the \emph{infinitesimal deformation} of the  Higgs bundle $((E,\bar\partial_E),\theta)$, in particular, if $\widetilde{\partial_{E,h}}\alpha=\widetilde{\bar\partial_E}\beta=0$, $(\alpha,\beta)$ is called the \emph{holomorphic infinitesimal deformation}.
\end{definition}

 Now we assume $X$ is a  Riemann surface. Consider the family $u(s):=((E,\bar\partial_{E_s}),\theta_s)$ lying in $M_{\mathrm{Dol}}(X,r)$ with parameter $s$ such that
$u(0)=u$ and $\frac{du(s)}{ds}|_{s=0}=(\alpha,\beta)\in \mathcal{H}^1(E,\theta)$. The  pluri-harmonic metric on the Higgs bundle $((E,\bar\partial_{E_s}),\theta_s)$ is denoted by $h(s)$ with $h(0)=h$, and fixing $\lambda,t$, the pluri-harmonic metric on the $\lambda^\prime$-flat bundle $((E,d^{\prime\prime}_{E_s}=\bar\partial_{E_s}+\lambda(\theta_s)^\dagger_{h(s)}),d^\prime_{E_s}=t(\lambda\partial_{E_s,h(s)}+\theta_s))$ is denoted by $h_t(s)$ with $h_t(0)=h_t$, which yields the operators $\delta^\prime_{E_s}:=\delta^\prime_{E,h_t(s)}$ and $\delta^{\prime\prime}_{E_s}:=\delta^{\prime\prime}_{E,h_t(s)}$. There is an integral curve $\gamma$ in $M_{\mathrm{Dol}}(X,r)$ passing through the point $u$ with tangent vector $(\alpha,\beta)$, the $(\lambda,t)$-action maps this curve to another curve $\gamma^\prime$, we can study its  local property at the point $\psi_{(\lambda,t)}(u)$ by calculating the  variations of  $\psi_{(\lambda,t)}$.

 \begin{proposition}
 Assume the original point $u$ and the parameters $\lambda,t$ are chosen to satisfy $h_t=h$, and assume $\frac{du(s)}{ds}|_{s=0}=(\alpha,\beta)$ is a holomorphic infinitesimal deformation, then
$$
\frac{d\psi_{(\lambda,t)}u(s)}{ds}\bigg|_{s=0}=\bigg(\alpha+\frac{\lambda(1-|t|^2)}{1+|t\lambda|^2}\beta^\dagger_h,\frac{t(1+|\lambda|^2)}{1+|t\lambda|^2}\beta\bigg).
$$
\end{proposition}

\begin{proof} 
We write $h_t(s)=h_tH_t(s)$, and $d^\prime_E=d^\prime_{E_0}, d^{\prime\prime}_E=d^{\prime\prime}_{E_0}, \delta^{\prime}_E=\delta^{\prime}_{E_0},\delta^{\prime\prime}_E=\delta^{\prime\prime}_{E_0}$, then  choosing a local $h_t$-unitary frame $\{e_i\}$ of $E$,  we have
\begin{align*}
  \lambda^\prime\partial h_t(s)(e_i,e_j)=& \lambda^\prime \partial h_t(H_t(s)e_i,e_j)
  = h_t(d^\prime_E(H_t(s)e_i),e_j)+ h_t(H_t(s)e_i,\delta^{\prime\prime}_Ee_j)\\
  =&h_t(H_t(s)d^\prime_{E_s}e_i, e_j)+h_t(H_t(s)e_i,\delta^{\prime\prime}_{E_s}e_j),\\
   \bar\partial h_t(s)(e_i,e_j)=& \bar\partial h_t(H_t(s)e_i,e_j)
  = h_t(d^{\prime\prime}_E(H_t(s)e_i),e_j)+ h_t(H_t(s)e_i,\delta^{\prime}_Ee_j)\\
  =&h_t(H_t(s)d^{\prime\prime}_{E_s}e_i, e_j)+h_t(H_t(s)e_i,\delta^{\prime}_{E_s}e_j).
\end{align*}
 Taking derivative with respect to $s$ and evaluating at $s=0$ give rise to
\begin{align*}
   h_t\bigg(\widetilde{d^\prime_E}\Big(\frac{d H_t(s)}{ds}\bigg|_{s=0}\Big)e_i,e_j\bigg)&=h_t\bigg(\frac{d (d^\prime_{E_s})}{ds}\bigg|_{s=0}e_i,e_j\bigg)+h_t(e_i,\frac{d (\delta^{\prime\prime}_{E_s})}{ds}|_{s=0}e_j),\\
  h_t\bigg(\widetilde{d^{\prime\prime}_E}\Big(\frac{d H_t(s)}{ds}\bigg|_{s=0}\Big)e_i,e_j\bigg)&=h_t\bigg(\frac{d (d^{\prime\prime}_{E_s})}{ds}\bigg|_{s=0}e_i,e_j\bigg)+h_t\bigg(e_i,\frac{d (\delta^{\prime}_{E_s})}{ds}\bigg|_{s=0}e_j\bigg),
\end{align*}
which implies that
\begin{align*}
\frac{d (\delta^{\prime}_{E_s})}{ds}\bigg|_{s=0}&=\widetilde{\delta^{\prime}_E}\bigg(\frac{d H_t(s)}{ds}\bigg|_{s=0}\bigg)-\bigg(\frac{d (d^{\prime\prime}_{E_s})}{ds}\bigg|_{s=0}\bigg)^\dagger_{h_t},\\
\frac{d (\delta^{\prime\prime}_{E_s})}{ds}\bigg|_{s=0}&=\widetilde{\delta^{\prime\prime}_E}\bigg(\frac{d H_t(s)}{ds}\bigg|_{s=0}\bigg)-\bigg(\frac{d (d^{\prime}_{E_s})}{ds}\bigg|_{s=0}\bigg)^\dagger_{h_t}.
\end{align*}
On the other hand, from the pluri-harmonicity of $h_t(s)$, namely
$$
[d^{\prime\prime}_{E_s}+\lambda^\prime\delta^{\prime\prime}_{E_s},d^{\prime}_{E_s}-\lambda^\prime\delta^{\prime}_{E_s}]=-\lambda^\prime [d^{\prime\prime}_{E_s},\delta^{\prime}_{E_s}]+\lambda^\prime[d^{\prime}_{E_s},\delta^{\prime\prime}_{E_s}]=0,
$$
it follows that
\begin{align}\label{a}
  \bigg(\widetilde{\delta^\prime_E}\widetilde{d^{\prime\prime}_E}\frac{d H_t(s)}{ds}\bigg|_{s=0}-\widetilde{\delta^{\prime\prime}_E}\widetilde{d^{\prime}_E}\frac{d H_t(s)}{ds}\bigg|_{s=0}\bigg)
  &+\bigg(\widetilde{d^{\prime\prime}_E}\bigg(\frac{d (d^{\prime\prime}_{E_s})}{ds}\bigg|_{s=0}\bigg)^\dagger_{h_t}-\widetilde{d^{\prime}_E}\bigg(\frac{d (d^{\prime}_{E_s})}{ds}\bigg|_{s=0}\bigg)^\dagger_{h_t}\bigg)\nonumber\\
  &-\bigg(\widetilde{\delta^\prime_E}\frac{d (d^{\prime\prime}_{E_s})}{ds}\bigg|_{s=0}-\widetilde{\delta^{\prime\prime}_E}\frac{d (d^{\prime}_{E_s})}{ds}\bigg|_{s=0}\bigg)=0.
\end{align}
Due to  \cite[Proposition 3.2]{CW}, we have
$$
\frac{d (d^{\prime\prime}_{E_s})}{ds}\bigg|_{s=0}=\alpha+\lambda\beta^\dagger_{h},\quad  \frac{d (d^{\prime}_{E_s})}{ds}\bigg|_{s=0}=-\lambda^\prime\alpha^\dagger_{h}+t\beta.
$$
The condition  $h_t=h$ leads to
$$
\delta^\prime_E=\partial_{E,h}-\bar\lambda\theta,\quad  \delta^{\prime\prime}_E=\bar{\lambda^\prime}\bar\partial-\bar t\theta^\dagger_h,
$$
then by \eqref{a}, since $(\alpha,\beta)$ is an  infinitesimal holomorphic deformation, we arrives at
$$
\widetilde{{\mathbb{D}^{\lambda^\prime}_{h_t}}^\star}\widetilde{\mathbb{D}^{\lambda^\prime}}\frac{d H_t(s)}{ds}\bigg|_{s=0}=0,
$$
for which applying the K\"{a}hler identities in Proposition \ref{ki}
implies
 $\widetilde{\mathbb{D}^{\lambda^\prime}}\frac{d H_t(s)}{ds}|_{s=0}=0$. But $\lambda^\prime$-flat bundle $(E, \mathbb{D}^{\lambda^\prime}=d^{\prime}_E+d^{\prime\prime}_E)$ is simple, $\frac{d H_t(s)}{ds}|_{s=0}$ has to be constant.
Therefore, from the calculation of 
$$
\frac{d}{ds}\bigg|_{s=0}\bigg(\frac{1}{1+|\lambda^\prime|^2}(d^{\prime\prime}_{E_s}+\lambda^\prime\delta^{\prime\prime}_{E_s,h_t(s)}),\frac{1}{1+|\lambda^\prime|^2}(d^{\prime}_{E_s}-\lambda^\prime\delta^{\prime}_{E_s,h_t(s)})\bigg),
$$
the desired result immediately follows.
\end{proof}

\subsection{Fixed Points}

In this subsection, we study the fixed points of the dynamical system $\psi_{(\lambda,t)}$. We first introduce the following definitions.

\begin{definition}\
\begin{itemize}
\item[(1)] For a given Higgs bundle $u\in  \mathbb{M}_{\mathrm{Dol}}(X,r)$, one defines the \emph{set of stable parameters} as 
$$
\mathcal{C}_u=\{(\lambda,t)\in \mathbb{C}\times\mathbb{C}^*: \psi_{(\lambda,t)}(u)=u\}.
$$
\item[(2)] For a given pair $(\lambda,t)\in \mathbb{C}\times \mathbb{C}^*$, one defines the \emph{set of fixed points} as 
$$
\mathfrak{Fix}_{(\lambda,t)}=\{u\in \mathbb{M}_{\mathrm{Dol}}(X,r): \psi_{(\lambda,t)}(u)=u\}.
$$
\end{itemize}
\end{definition}

\begin{definition}[\cite{MSWW,TM3}] 
A Higgs bundle  $((E,\bar\partial_E),\theta)$ over $X$ is called
a \emph{decoupled Higgs bundle} if there is a  Hermitian metric $h$  on $E$ satisfying $R(h)=(\partial_{E,h}+\bar\partial_E)^2=0 $ and $[\theta,\theta^\dagger_h]=0$, and in this case, such metric is called a \emph{decoupling metric}.
\end{definition}

\begin{proposition}
Let $X$ be a Riemann surface of genus $g\geq 2$, and let $M_{\mathrm{de}}(X,r)$ be the subset of $M_{\mathrm{Dol}}(X,r)$ consisting of stable decoupled Higgs bundles. Then $M_{\mathrm{de}}(X,r)$ is a connected real  analytic subvariety of $M_{\mathrm{Dol}}(X,r)$ with dimension $3r^2(g-1)+rg+3$.
\end{proposition}

\begin{proof}
It is known that the Hitchin moduli space $M_{\mathrm{Hit}}(X,r)$  defined as the space of irreducible Hitchin pairs  (solutions to Hitchin's self-duality equations with a given Hermitian metric on the complex vector bundle) modulo unitary gauge transformations is diffeomorphic to $M_{\mathrm{Dol}}(X,r)$.  This means $M_{\mathrm{de}}(X,r)$ can be defined as a subset of $M_{\mathrm{Hit}}(X,r)$ consisting of irreducible decoupled Hitchin pairs. The forgetful map $(E,\theta)\mapsto E$ provides a  fibration $M_{\mathrm{de}}\rightarrow B(X,r)$, where $B(X,r)$ is the moduli space  of rank $r$ stable bundles with vanishing the first Chern class over $X$. One locally writes $\theta=\Theta dz$ for an $r\times r$ complex matrix $\Theta$, then the  condition $[\Theta,\Theta^\dagger]=0$ implies that $\Theta$ is a normal matrix, hence the real dimension of the fibers  is given by $r(r+1)g-(r^2-1)=r^2(g-1)+rg+1$. It follows from   the invariance of $\mathbb{C}^*$-action on  $M_{\mathrm{de}}(X,r)$  and the connectedness of $B(X,r)$ that  $M_{\mathrm{de}}(X,r)$ is connected.
\end{proof}

\begin{theorem}\label{9}
Let $X$ be a  Riemann surface and let $u\in  \mathbb{M}_{\mathrm{Dol}}(X,r)$ represents a decoupled Higgs bundle of rank $r$ with  nontrivial Higgs field, then $\mathbb{C}\times\{\mu_l^m, m=0,\cdots,l-1\}\subseteq \mathcal{C}_u\subseteq (\mathbb{C}\times\{\mu_l^m, m=0,\cdots,l-1\})\bigcup\{(\lambda,t)\in \mathbb{C}^*\times\mathbb{C}^*:|t||\lambda|^2=1,|t|\neq1, t=|t|\mu_{l'}^k, k=1,\cdots,l'-1\}$, where $\mu_l=e^{\frac{2\pi i}{l}}, \mu_{l'}=e^{\frac{2\pi i}{l'}}$ for some fixed positive integers $1\leq l\leq r, 2\leq l'\leq r$.
\end{theorem}

\begin{proof}
  \textbf{Case I}: $\lambda\neq0, |t|= 1$.
  
 Let $(E,\mathbb{D}^\lambda,h)$ be a stable    $\lambda$-flat bundle with the  pluri-harmonic metric $h$.  The operators $\delta^\prime_{h_t},\delta^{\prime\prime}_{h_t}, \partial_{h_t}, \bar\partial_{h_t}, \theta_{h_t}, \theta^\dagger_{h_t}$ can be defined via $(\mathbb{D}^\lambda, h_t)$ and $(\mathbb{D}^{\lambda^\prime}, h_t)$, respectively, in order to distinguish them, we add the  subscripts $\lambda, \lambda^\prime$ for them. Then by definition, we have
\begin{align*}
 \delta^\prime_{h_t,\lambda^\prime}=\delta^\prime_{h_t,\lambda},\quad
  \delta^{\prime\prime}_{h_t,\lambda^\prime}=\bar t\delta^{\prime\prime}_{h_t,\lambda},
\end{align*}
hence
\begin{equation}\label{p0}
\begin{aligned}
\bar\partial_{h_t,\lambda^\prime}&=\frac{1}{1+|t\lambda|^2}\bigg(d^{\prime\prime}_E+|t|^2\lambda\delta^{\prime\prime}_{h_t,\lambda}\bigg),\quad
 \partial_{h_t,\lambda^\prime}=\frac{1}{1+|t\lambda|^2}\bigg(|t|^2\bar\lambda d^\prime_E+ \delta^\prime_{h_t,\lambda}\bigg),\\
 \theta^\dagger_{h_t,\lambda^\prime}&=\frac{\bar t}{1+|t\lambda|^2}\bigg(\bar \lambda d^{\prime\prime}_E-\delta^{\prime\prime}_{h_t,\lambda}\bigg),\qquad\ \
 \theta_{h_t,\lambda^\prime}=\frac{ t}{1+|t\lambda|^2}\bigg(d^{\prime}_E-\lambda\delta^{\prime}_{h_t,\lambda}\bigg).
\end{aligned}
\end{equation}
When $|t|=1$, we arrive at
\begin{align*}
   \bar\partial_{h_t,\lambda^\prime}&= \bar\partial_{h_t,\lambda}, \ \ \ \partial_{h_t,\lambda^\prime}= \bar\partial_{h_t,\lambda},\\
   \theta^\dagger_{h_t,\lambda^\prime}&=\bar t\theta_{h_t,\lambda},\  \ \ \theta_{h_t,\lambda^\prime}= t\theta_{h_t,\lambda}.
\end{align*}
It follows from $\bar\partial_{h_t,\lambda^\prime}^2=\bar\partial_{h_t,\lambda^\prime} \theta_{h_t,\lambda^\prime}= \theta_{h_t,\lambda^\prime}\wedge  \theta_{h_t,\lambda^\prime}=0$ that
$\bar\partial_{h_t,\lambda}^2=\bar\partial_{h_t,\lambda} \theta_{h_t,\lambda}= \theta_{h_t,\lambda}\wedge  \theta_{h_t,\lambda}=0$, namely, $h_t$ is also a pluri-harmonic metric on $(E,\mathbb{D}^\lambda)$. Then by the uniqueness of pluri-harmonic metric, we have $h_t=c\cdot h$ for some constant $c$ when $|t|=1$. 

Consequently, the dynamical system $\psi_{(\lambda,t)}$ sends a polystable Higgs bundle $((E,\bar \partial_E ),\theta)$ to another one $((E,\bar \partial_E ),t\theta)$, namely,  $\psi_{(\lambda,t)}$ is just the
usual $S^1$-action by $t$ on $\mathbb{M}_{\mathrm{Dol}}(X,r)$.

Now let $((E,\bar \partial_E ),\theta)$ be a decoupled  Higgs bundle with the Higgs field $\theta$ nonzero. If it is a fixed point of $\psi_{(\lambda,t)}$ for $|t|=1$, then there is a $C^\infty$-automorphism $\mathfrak{g}\in \Aut(E)$ such that
\begin{align}
\mathfrak{g}\widetilde{\bar\partial_E}\mathfrak{g}^{-1}&=0,\\
 \mathfrak{ g}\theta \mathfrak{g}^{-1}&=t\theta.\label{eq5.4}
\end{align}
 Since $(E,\bar \partial_E)$ is  already a polystable  bundle, thus $(E,\bar \partial_E)=\bigoplus_{i=1}^N(E_i,\bar \partial_{E_i})$ for stable bundles $(E_i,\bar \partial_{E_i})$ with vanishing Chern classes, by  the first equation,   $\mathfrak{g}$ must be of the following form
  $$
  \mathfrak{g}=\begin{pmatrix} \ a_1\mathrm{Id}_{E_1}& & & \\    &  &\ddots &\\ & && a_N\mathrm{Id}_{E_N} \end{pmatrix}
  $$
for nonzero constants $a_1,\cdots,a_N$. If there exists $i$ such that  $pr_{E_i}\circ\theta|_{E_i}:E_i\rightarrow E_i\otimes K_X$ is nonzero, where $pr_{E_i}$ denotes the projection onto $E_i\otimes K_X$ of $E\otimes K_X$, then the second equation admits a solution  for $\mathfrak{g}$ exist if and only if  $t=1$. If each $pr_{E_i}\theta|_{E_i}$ vanishes,
since $\theta$ is nonzero and $[\theta,\theta^\dagger_h]=0$, there exist $ i_1\neq i_2\neq\cdots\neq i_l$ for $1\leq i_1,\cdots,i_l\leq N$ such that $pr_{E_{i_{\mu+1}}}\circ\theta|_{E_{i_\mu}}: E_{i_\mu}\rightarrow E_{i_{\mu+1}}\otimes K_X$ for $1\leq \mu\leq l-1$ and $pr_{E_{i_{1}}}\circ\theta|_{E_{i_l}}: E_{i_l}\rightarrow E_{i_{1}}\otimes K_X$ are all nonzero. Therefore by the equation \eqref{eq5.4} we have
\begin{align}\label{19}
  a_{i_1}=ta_{i_2}, a_{i_2}=ta_{i_3},\cdots a_{i_{l-1}}=ta_{i_l},a_{i_l}=ta_{i_1},
\end{align}
 thus $t$ has to be   $l$-roots of units. Moreover  all components  $a_1,\cdots,a_N$ are solved  by a series of equations as the form of \eqref{19}.

 \textbf{Case II}: $\lambda=0, t\in \mathbb{C^*}$.
 
  In this case, $(\lambda,t)$-action is just the scalar multiplication on Higgs field by $t$. The same conclusions as above follows.

\textbf{Case III}:  $\lambda\neq0, |t|\neq 1$.

Let      $((E,\bar\partial_E),\theta)$ be a decouped Higgs  bundle with the Higgs field $\theta$ nonzero  and the decoupling metric $h$. 
One writes
\begin{align*}
  td^\prime_E&=t\lambda(\partial_{E,h}+\frac{t-a}{t\lambda}\theta)+a\theta,\\
  d^{\prime\prime}_E&=\bar\partial_E+\lambda(1-t\bar a)\theta^\dagger_h+t\lambda \bar a\theta^\dagger_h,
\end{align*}
for some $a\in \mathbb{C}$, then $((E,\bar\partial_E+\lambda(1-t\bar a)\theta^\dagger_h),a\theta)$ is a Higgs bundle. Note that $(\partial_{E,h}-\bar\lambda(1-\bar t a)\theta)+(\bar\partial_E+\lambda(1-t\bar a)\theta^\dagger_h)$ is a unitary connection with respect to $h$. If one takes $$a=t\frac{1+|\lambda|^2}{1+|\lambda^\prime|^2},$$
      we find that  $h $  is the pluri-harmonic metric  both for the $\lambda^\prime$-flat bundle $(E,\mathbb{D}^{\lambda^\prime}=td^\prime_E+d^{\prime\prime}_E)$ and the Higgs bundle $((E,\bar\partial_E+\lambda(1-t\bar a)\theta^\dagger_h),a\theta)$. Therefore,  by the uniqueness of pluri-harmonic metric we get 
      $$
      \psi_{(\lambda,t)}((E,\bar\partial_E),\theta)=((E,\bar\partial_E+\lambda(1-t\bar a)\theta^\dagger_h),a\theta).
      $$

Assume  there is a $C^\infty$-automorphism $\mathfrak{g}\in \Aut(E)$ such that
 \begin{align}
    \mathfrak{g}\widetilde{\bar\partial_E}\mathfrak{g}^{-1}-\lambda(1-t\bar a)\theta^\dagger_h&=0,\label{j}\\
    \mathfrak{g}\theta \mathfrak{g}^{-1}-a\theta&=0\label{k}.                                                                                                                                \end{align}
Since $[\theta,\theta_h^\dagger]=0$, over a small neighborhood of some point $x\in X$ with $\theta|_x\neq0$, there is an orthogonal decomposition of $(E,h)$ into Hermitian line bundles as $(E,h)=\bigoplus_{i=1}^r(L_i,h_i)$ such that the Higgs field $\theta$ has the decomposition $\theta=\bigoplus_{i=1}^r\varphi_i\cdot\mathrm{Id}_{L_i}$ with one-forms $\varphi_i$ \cite{TM3}.  Then from the equation \eqref{k} it follows that $|a|=1$, namely $|t||\lambda|^2=1\ (|\lambda|^2\neq1)$, hence $a=\frac{t}{|t|}$.
We consider $n$-times iteration of  the $(\lambda,t)$-action on $((E,\bar\partial_E),\theta)$. The direct calculation shows
$$
\psi^n_{(\lambda,t)}((E,\bar\partial_E),\theta)=\left\{
                                                 \begin{array}{ll}
                                                   ((E,\bar\partial_E+n(\lambda-\frac{1}{\bar\lambda})\theta^\dagger_h),\theta), & \hbox{$t>0,t\neq 1$;} \\
                                                  \bigg((E,\bar\partial_E+(\lambda-\frac{1}{\bar\lambda})\frac{1-(\frac{\bar t}{|t|})^n}{1-\frac{\bar t}{|t|}}\theta^\dagger_h),(\frac{t}{|t|})^n\theta\bigg) , & \hbox{\textrm{other cases.}}
                                                 \end{array}
                                               \right.
$$
By assumption, the limit $\lim\limits_{n\rightarrow \infty}\psi^n_{(\lambda,t)}((E,\bar\partial_E),\theta)$ lies in the isomorphism class of $((E,\bar\partial_E),\theta)$, hence $t$ cannot be a positive real number. For the other cases, writing $t=|t|e^{i\alpha}$, we must have $e^{i\alpha nl'}=1$ for any positive integer $n$, where $2\leq l'\leq r$ is a fixed positive integer, therefore, $\alpha=\frac{2k\pi}{l'}$ for some $k\in\{1,\cdots,l'-1\}$.

Combining the three cases together, we complete the proof the theorem.
\end{proof}

\begin{corollary}\label{coro4}\
\begin{enumerate}
   \item If $\Tr(\theta)$ is nonzero at some point $x\in X$, then $\mathcal{C}_u=\mathbb{C}\times \{1\}$.
   \item  Fixing $(\lambda,t)\in\mathbb{C}\times\mathbb{C}^*$ with $t|\lambda|^2\neq1$ and $t\neq 1$,  let $\mathfrak{Fix}^{\mathrm{de}}_{(\lambda,t)}=\mathfrak{Fix}_{(\lambda,t)}\bigcap M_{\mathrm{de}}(X,r)$, then   $B(X,r)$ as a subvariety of $M_{\mathrm{Dol}}(X,r)$ is a connected component of $\mathfrak{Fix}^{\mathrm{de}}_{(\lambda,t)}$.
\end{enumerate}
\end{corollary}

\begin{proof} (1) is obvious. To show  (2), we consider a sequence $\{(E_n,\theta_n)\}$ of stable decoupled Higgs bundles lying in $\mathfrak{Fix}^{\mathrm{de}}_{(\lambda,t)}\backslash B(X,r)$ parameterized by positive integers $n\in [N,\infty)$ for a large   $N$ such that $\lim\limits_{n\rightarrow\infty}(E_n,\theta_n)=(E_\infty,0)\in B(X,r)$. For each $(E_n,\theta_n)$ , there is a $C^\infty$-automorphism $\mathfrak{g}_n\in \Aut(E)$ satisfying the equations \eqref{j} and \eqref{k}. Since  $[\theta_n,(\theta_n)^\dag_{h_n}]=0$ for the decoupling metric $h_n$, by equation \eqref{k} ($a\neq 1$), for any $n\in [N,\infty)$  the automorphism $\mathfrak{g}_n$ locally has a matrix form as
$\left(
   \begin{array}{cc}
     A_n & 0 \\
     0& B_n \\
   \end{array}
 \right)
$, where all diagonal elements of the nonzero matrix $B_n$ are zero. On the other hand, from equation \eqref{j} it follows that $\mathfrak{g}_\infty=\lim\limits_{n\rightarrow\infty}\mathfrak{g}_n$ is exactly $c\cdot \mathrm{Id}_{E_\infty}$ for some constant $c\in \mathbb{C}^*$, which is a contradiction. Therefore, the desired sequence does not exist, the conclusion follows.
\end{proof}

\begin{remark} 
Studying  $M_{\mathrm{de}}(X,r)$ and $\mathfrak{Fix}^{\mathrm{de}}_{(\lambda,t)}$ is an interesting and hard problem. For example, what are the smooth (or singular) points of $M_{\mathrm{de}}(X,r)$, and dose there exist any other connected components of $\mathfrak{Fix}^{\mathrm{de}}_{(\lambda,t)}$ except $B(X,r)$?
\end{remark}

\begin{theorem}\label{cv}
Let $\mathfrak{Fix}=\bigcap\limits_{(\lambda,t)\in \mathbb{C}^*\times \mathbb{C}^*}\mathfrak{Fix}_{(\lambda,t)}$. Then $\mathfrak{Fix}$ consists of the set of complex variations of Hodge structure.
\end{theorem}

\begin{proof}
Let $\overline{\mathfrak{Fix}}=\bigcap_{(\lambda,t)\in \mathbb{C}\times \mathbb{C}^*}\mathfrak{Fix}_{(\lambda,t)}$. We first show  that $\overline{\mathfrak{Fix}}$ consists of the complex variations of Hodge structure. Consider a complex variation of Hodge structure $u=((E,\bar\partial_E),\theta)$  as 
\begin{equation*} 
\bar\partial_E=\begin{pmatrix} \bar\partial_{E_1}& & \\ &\ddots  &   \\   & &  \bar\partial_{E_k} 
\end{pmatrix}, \quad 
\theta= \begin{pmatrix} 0 & & & \\ \theta_1& 0 & &   \\   & \ddots &\ddots &\\ & & \theta_{k-1}& 0 \end{pmatrix},
\end{equation*}
we only need to show $\mathcal{C}_u=\mathbb{C}\times C^*$.
By  the  pluri-harmonic metric $h$ on $((E,\bar\partial_E),\theta)$ which makes the splitting $E=\bigoplus_{i=1}^kE_i$ being orthogonal, assuming $\lambda\neq 0$, it produces two flat bundles $((E,\bar\partial_E^\prime),\nabla^\prime)$ and $((E,\bar\partial_E^{\prime\prime}),\nabla^{\prime\prime})$ given by
\begin{align*}
\nabla^\prime&= \begin{pmatrix} \ \partial_{E_1,h}& & & \\ \lambda^{-1}\theta_1& \ \partial_{E_2,h} & &   \\   & \ddots &\ddots &\\ & & \lambda^{-1}\theta_{k-1}& \partial_{E_k,h} \end{pmatrix},\qquad\ \ \
\bar \partial_E^\prime=\begin{pmatrix} \bar\partial_{E_1}&\lambda(\theta_1)^\dagger_{h} & & \\   & \ddots &\ddots &\\ & & \bar\partial_{E_{k-1}}& \lambda(\theta_{k-1})^\dagger_h\\ & & & \bar\partial_{E_{k}} \end{pmatrix},\\
\nabla^{\prime\prime}&= \begin{pmatrix} \ \partial_{E_1,h}& & & \\ (t\lambda)^{-1}\theta_1& \ \partial_{E_2,h} & &   \\   & \ddots &\ddots &\\ & & (t\lambda^{-1})\theta_{k-1}& \partial_{E_k,h} \end{pmatrix},\quad
\bar \partial_E^{\prime\prime}=\begin{pmatrix} \bar\partial_{E_1}&t\lambda(\theta_1)^\dagger_{h} & & \\   & \ddots &\ddots &\\ & & \bar\partial_{E_{k-1}}& t\lambda(\theta_{k-1})^\dagger_h\\ & & & \bar\partial_{E_{k}} \end{pmatrix}.
\end{align*}
If these two flat bundles are equivalent to each other, then there is  a $C^\infty$-automorphism $\mathfrak{g}\in \Aut(E)$ such that
\begin{align*}
 \mathfrak{g} \begin{pmatrix}  \widetilde{ \partial_{E_1,h}}& & & \\ & \ \widetilde{\partial_{E_2,h}} & &   \\   &  &\ddots &\\ & && \widetilde{\partial_{E_k,h}} \end{pmatrix} \mathfrak{g}^{-1}+\lambda^{-1}\mathfrak{g}\begin{pmatrix} \ 0& & & \\ \theta_1& \ 0 & &   \\   & \ddots &\ddots &\\ & & \theta_{k-1}&0 \end{pmatrix}\mathfrak{g}^{-1}&=(t\lambda)^{-1}\begin{pmatrix} \ 0& & & \\ \theta_1& \ 0 & &   \\   & \ddots &\ddots &\\ & & \theta_{k-1}&0\\ \end{pmatrix},\\
 \mathfrak{g} \begin{pmatrix} \ \widetilde{ \bar\partial_{E_1}}& & & \\ & \ \widetilde{\bar\partial_{E_2}} & &   \\   &  &\ddots &\\ & && \widetilde{\bar\partial_{E_k}}\\ \end{pmatrix} \mathfrak{g}^{-1}+\lambda \mathfrak{g}\begin{pmatrix} 0&(\theta_1)^\dagger_{h} & & \\   & \ddots &\ddots &\\ & & 0& (\theta_{k-1})^\dagger_h\\ & & & 0\\ \end{pmatrix}\mathfrak{g}^{-1}
 &=t\lambda\begin{pmatrix} 0&(\theta_1)^\dagger_{h} & & \\   & \ddots &\ddots &\\ & & 0& (\theta_{k-1})^\dagger_h\\ & & & 0\\ \end{pmatrix}.
\end{align*}
Obviously, the above equations have a solution
$$
\mathfrak{g}=\begin{pmatrix} \ \mathrm{Id}_{E_1}& & & \\ & \ t^{-1}\mathrm{Id}_{E_2} & &   \\   &  &\ddots &\\ & && t^{-k+1}\mathrm{Id}_{E_k}\\ \end{pmatrix}.
$$
It immediately follows that $\psi_{(\lambda,t)}((E,\bar\partial_E),\theta)=((E,\bar\partial_E),\theta)$ for any $(\lambda,t)\in \mathbb{C}\times\mathbb{C}^*$.

Next we show that $\overline{\mathfrak{Fix}}={\mathfrak{Fix}}$. Assume $((E,\bar\partial_E),\theta)$  lies in $\mathfrak{Fix}$, then we have
\begin{align}\label{5}
  \lim\limits_{t\rightarrow0}\lim\limits_{\lambda\rightarrow0}\psi_{(\lambda,t)}  ((E,\bar\partial_E),\theta)=\lim\limits_{ t\rightarrow0}   ((E,\bar\partial_E),\theta)= ((E,\bar\partial_E),\theta).
\end{align}
On the other hand, let $h$ be a pluri-harmonic metric on  $((E,\bar\partial_E),\theta)$ and $h_t$ be the  pluri-harmonic metric on $\psi_{\lambda,t}((E,\bar\partial_E),\theta)$.  Writing $h_t=h\cdot s$ with  $s=e^\chi$ for $\chi\in \End(E)$,  the direct calculation shows that  the image of $(\lambda,t)$-action on $((E,\bar\partial_E),\theta)$ is given by $ \psi_{(\lambda,t)}((E,\bar\partial_E),\theta)=((E,\bar\partial^\prime_E),\theta^\prime)$, where
\begin{align*}
\bar\partial^\prime_E(\lambda,t)=&\bar\partial_E+\frac{\lambda(1-|t|^2)}{1+|t\lambda|^2}\theta^\dagger_h+\frac{\lambda|t|^2}{1+|t\lambda|^2}s^{-1}(\bar
  \lambda\widetilde{\bar\partial_E}-\widetilde{\theta^\dagger_h})s,\\
 \theta^\prime(\lambda,t)= &\frac{t(1+|\lambda|^2)}{1+|t\lambda|^2}\theta-\frac{\lambda t}{1+|t\lambda|^2}s^{-1}(\widetilde{\partial_{E,h}}-\bar\lambda\widetilde{\theta})s.
\end{align*}
The condition $\bar\partial^\prime_E\theta^\prime=0$ gives rise to a equation satisfied by $s$.
Then  we immediately find that
 \begin{align}\label{6}
  \lim\limits_{t\rightarrow0}\lim\limits_{\lambda\rightarrow0}\psi_{(\lambda,t)}  ((E,\bar\partial_E),\theta)= \lim\limits_{ t\rightarrow0}\psi_{(0,t)} ((E,\bar\partial_E),\theta)= \lim\limits_{ t\rightarrow0} ((E,\bar\partial_E),t\theta).
 \end{align}
Comparing \eqref{5} with \eqref{6} implies $((E,\bar\partial_E),\theta)$ is a    complex variation of Hodge structure.
\end{proof}

\subsection{Asymptotic Behaviour}

In this subsection, $X$ is always assumed to be a Riemann surface.  We first recall Simpson's beautiful work on the limits of $\mathbb{C}^*$-action on the Hodge moduli space  $\mathbb{M}_{\mathrm{Hod}}(X,r)$ (for more details, see \cite{CS9,Hua,HH3}).

\begin{definition}[\cite{CS9}]
 Let $E$ be a holomorphic vector bundle over a  Riemann surface $X$ with a holomorphic flat connection $\nabla:E\rightarrow E\otimes_{\mathcal{O}_X}K_X$, where $K_X$ denotes the canonical line bundle over $X$. A decreasing filtration $\{F^\bullet\}$ of $E$ by strict subbundles
$$
E=F^0\supset F^1\supset\cdots\supset F^k=0
$$
is called a \emph{Simpson filtration} if it satisfies the following two conditions:
\begin{itemize}
  \item Griffiths transversality: $\nabla: F^p\rightarrow F^{p-1}\otimes_{\mathcal{O}_X}\Omega^1_X$,
  \item graded-semistability: the associated graded Higgs bundle $(\mathrm{Gr}_F(E),\mathrm{Gr}_F(\nabla))$, where $\mathrm{Gr}_F(E)=\bigoplus_pE^p$ with $E^p=F^p/F^{p-1}$ and $\mathrm{Gr}_F(\nabla)=\bigoplus_p\theta^p$ with $\theta^p: E^p\rightarrow E^{p-1}\otimes_{\mathcal{O}_X}K_X$ induced from $\nabla$, is a semistable Higgs bundle.
\end{itemize}
\end{definition}

\begin{theorem}[{\cite[\textrm{Theorem 2.5, Lemma 4.1, Corallary 4.2, Proposition 4.3}]{CS9}}]\label{s}
Let $(E,\nabla)$ be a flat bundle over a  Riemann surface  $X$.
\begin{enumerate}
   \item There exist  Simpson filtrations $\{F^\bullet\}$ on $(E,\nabla)$.
  
   \item Let $\{F_1^\bullet\}$, $\{F_2^\bullet\}$ be two Simpson filtrations on $(E,\nabla)$, then the associated graded Higgs bundles $(\mathrm{Gr}_{F_1}(E),\mathrm{Gr}_{F_1}(\nabla))$ and $(\mathrm{Gr}_{F_2}(E),\mathrm{Gr}_{F_2}(\nabla))$ are $S$-equivalent.
  
   \item $(\mathrm{Gr}_F(E),\mathrm{Gr}_F(\nabla))$ is a stable Higgs bundle iff the Simpson filtration is unique.
  
   \item $\lim\limits_{t\rightarrow 0}(E,t\cdot\nabla)=(\mathrm{Gr}_F(E),\mathrm{Gr}_F(\nabla))$.
\end{enumerate}
\end{theorem}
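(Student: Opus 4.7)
The plan is to derive all four statements from the $\mathbb{C}^*$-action $t\cdot(E,D^\lambda)=(E,tD^\lambda)$ on $\mathbb{M}_{\mathrm{Hod}}(X,r)$ (which covers $\lambda\mapsto t\lambda$ on the base) together with the Rees equivalence between decreasing filtrations $F^\bullet$ on $E$ satisfying $\nabla(F^p)\subset F^{p-1}\otimes\Omega^1_X$ and $\mathbb{G}_m$-equivariant families of $\lambda$-flat bundles over $\mathbb{A}^1$ extending the orbit $\{(E,t\nabla)\}_{t\in\mathbb{C}^*}$. The basic picture is that this orbit sweeps a punctured curve in $\mathbb{M}_{\mathrm{Hod}}$ lying over $\mathbb{C}^*\subset\mathbb{C}$, and producing a Simpson filtration is exactly the problem of filling in this curve across $t=0$ so that the central fibre is a semistable Higgs bundle.

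First I would prove that $\lim_{t\to 0}(E,t\nabla)$ exists in $\mathbb{M}_{\mathrm{Dol}}(X,r)$: combining $\mathbb{C}^*$-equivariance with the properness of Theorem \ref{d} traps the orbit in a compact region of $\mathbb{M}_{\mathrm{Hod}}$ as $t\to 0$, and separatedness of the coarse moduli yields a unique limit point $(\mathrm{Gr},\theta_{\mathrm{Gr}})$ representing a polystable Higgs bundle. Next, for part (1), I would realise this abstract moduli limit as the central fibre of a genuine Rees family by iteratively constructing $F^\bullet$: start with the trivial filtration $F^0=E\supset F^1=0$, and at each stage, if the graded Higgs object fails to be semistable, modify $F^\bullet$ using the Harder--Narasimhan subsheaves of the current graded so as to strictly decrease a well-chosen discrete invariant. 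Termination, guaranteed by the boundedness that Theorem \ref{d} supplies, produces the Simpson filtration, and simultaneously identifies $(\mathrm{Gr}_F(E),\mathrm{Gr}_F(\nabla))$ with $\lim_{t\to 0}(E,t\nabla)$, which is part (4). For part (2), two Simpson filtrations give two Rees families with the same generic fibre $(E,\nabla)$ whose central fibres both represent the unique limit point in $\mathbb{M}_{\mathrm{Dol}}$; since two semistable Higgs bundles map to the same moduli point if and only if they are $S$-equivalent, the claim follows. For part (3), a rigidity argument finishes the proof: if $\mathrm{Gr}_F$ is stable its $S$-equivalence class is a singleton, so the moduli limit determines $\mathrm{Gr}_F$ up to isomorphism and the Rees dictionary recovers $F^\bullet$ on $E$ uniquely, while a strictly semistable graded carries a positive-dimensional family of Jordan--H\"older-type flags that lifts to distinct but $S$-equivalent Simpson filtrations.

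The principal difficulty I anticipate is the iterative construction in part (1): one must choose a numerical invariant of the filtered object that strictly decreases under each Harder--Narasimhan modification, is bounded below, and takes values in a discrete set, all while preserving Griffiths transversality throughout the procedure. This delicate interplay between the transversality constraint and the coherent boundedness supplied by Theorem \ref{d} is the heart of Simpson's argument, and it is where I would expect to spend most of the effort.
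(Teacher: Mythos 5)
This theorem is not proved in the paper at all: it is quoted verbatim as Simpson's result and attributed to \cite{CS6} (``Iterated destabilizing modifications for vector bundles with connection''), so there is no in-paper argument to compare yours against. Judged on its own, your proposal does identify the correct overall strategy --- the Rees dictionary between Griffiths-transverse filtrations and $\mathbb{G}_m$-equivariant extensions of the orbit $\{(E,t\nabla)\}$ over $\mathbb{A}^1$, an iterated Harder--Narasimhan destabilization to produce the filtration, and separatedness of the coarse moduli to get $S$-equivalence of the central fibres --- and this is indeed the architecture of Simpson's proof.

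There are, however, two concrete gaps. First, you lean twice on Theorem \ref{d}, but that theorem asserts properness of the quotient map $q:R_{\mathrm{Hod}}^\lambda(X,x,r)\rightarrow\mathbb{M}_{\mathrm{Hod}}^\lambda(X,r)$, i.e.\ compactness of preimages of compact sets under the framed-to-unframed map at a \emph{fixed} $\lambda$. It says nothing about the closure of the $\mathbb{C}^*$-orbit as $t\to 0$, which lives over the varying base point $\lambda=t$; trapping that orbit requires properness of $\mathbb{M}_{\mathrm{Hod}}(X,r)$ over $\mathbb{C}\times B$ for a Hitchin-type base $B$ (so that the characteristic polynomial of $t\nabla$ tending to $0$ confines the orbit), or else Langton-type semistable reduction --- neither of which is what Theorem \ref{d} provides. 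The same misattribution undercuts your termination argument. Second, and more seriously, the termination of the iterative destabilization is precisely the content of Simpson's paper and cannot be outsourced to ``boundedness'': one must exhibit a discrete, bounded-below invariant of Griffiths-transverse filtrations that strictly decreases at each modification while transversality is preserved, and Simpson's construction of such an invariant (and the verification that the modified filtration remains transverse) is the entire technical core. Your proposal names this difficulty honestly but does not resolve it, so as written the argument for parts (1) and (4) is a plan rather than a proof; parts (2) and (3), which you reduce to separatedness and a rigidity/Jordan--H\"older dichotomy, are essentially fine once (1) is in place.
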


Now we apply Simpson's theorem to study the asymptotic behaviour of the dynamical system $\psi_{(\lambda,t)}$. We first introduce the following notations.

\begin{definition}\label{ds}
Given a Higgs bundle $((E,\bar\partial_E),\theta)\in \mathbb{M}_{\mathrm{Dol}}(X,r)$,  we define the following five  limits:
\begin{enumerate}
\item[(1)] $\psi_{\underline{(0,0)}}((E,\bar\partial_E),\theta)
   :=\lim\limits_{t\rightarrow 0}\psi_{(0,t)}((E,\bar\partial_E),\theta),$
   \smallskip
   \item[(2)] $\psi^{\underline{(0,0)}}((E,\bar\partial_E),\theta)
   :=\lim\limits_{t\rightarrow 0}\lim\limits_{\lambda\rightarrow 0}\psi_{(\lambda,t)}((E,\bar\partial_E),\theta),$
   \smallskip
   \item[(3)] $\psi_{\overline{(0,0)}}((E,\bar\partial_E),\theta)
   :=\lim\limits_{\lambda\rightarrow 0}\psi_{(\lambda,0)}((E,\bar\partial_E),\theta)$,
   \smallskip
   \item[(4)] $\psi^{\overline{(0,0)}}((E,\bar\partial_E),\theta)
   :=\lim\limits_{\lambda\rightarrow 0}\lim\limits_{t\rightarrow 0}\psi_{(\lambda,t)}((E,\bar\partial_E),\theta),$
   \smallskip
   \item[(5)] $\psi_{(0,0)}((E,\bar\partial_E),\theta)
   :=\lim\limits_{(\lambda,t)\rightarrow (0,0)}\psi_{(\lambda,t)}((E,\bar\partial_E),\theta),$
   \end{enumerate}
where $\psi_{(\lambda,0)}$ is defined by By Simpson's theorem, namely
\begin{align*}
  \psi_{(\lambda,0)}((E,\bar\partial_E),\theta)
   =&\lim\limits_{ t\rightarrow 0}((E,\bar\partial_E+\lambda\theta^\dagger_h),t\partial_{E,h}+t\lambda^{-1}\theta)\\
   =&(\mathrm{Gr}_{F_\lambda}(E_\lambda),\mathrm{Gr}_{F_\lambda}(\nabla_\lambda),
 \end{align*}
 with $h$ being a pluri-harmonic metric on $((E,\bar\partial_E),\theta)$, $(E_\lambda,\nabla_\lambda)=((E,\bar\partial_E+\lambda\theta^\dagger_h),\partial_{E,h}+\lambda^{-1}\theta)$, and $\{F^\bullet_\lambda\}$ standing for  a Simpson filtration on $(E_\lambda,\nabla_\lambda)$. \end{definition}

\begin{remark}
The first two limits have been used in the proof of Theorem \ref{cv}, and we have showed  that 
$$
\psi_{\underline{(0,0)}}((E,\bar\partial_E),\theta)=\psi^{\underline{(0,0)}}((E,\bar\partial_E),\theta)= \lim\limits_{t\rightarrow 0}((E,\bar\partial_E),t\theta).
$$  
In general, it is not clear whether the last three limits   exist,  secondly, we also do not know whether these  limits  coincide if they all exist.
\end{remark}

\begin{proposition}
 If for a given Higgs bundle $((E,\bar\partial_E),\theta)\in \mathbb{M}_{\mathrm{Dol}}(X,r)$, the limit $ \psi^{\overline{(0,0)}}((E,\bar\partial_E),\theta)$ (or $ \psi_{(0,0)}((E,\bar\partial_E),\theta)$) exists, then it must be a complex variation of Hodge structure.
\end{proposition}

\begin{proof}
Let $ \psi^{\overline{(0,0)}}((E,\bar\partial_E),\theta)=((E,\bar\partial^\prime_E),\theta^\prime)$, then we calculate
\begin{align*}
 \lim\limits_{\tilde t\rightarrow0}\lim\limits_{\lambda\rightarrow0}\lim\limits_{ t\rightarrow0}\psi_{(\lambda t,\tilde t)}((E,\bar\partial^\prime_E),\theta^\prime)
 = & \lim\limits_{\tilde t\rightarrow0}\lim\limits_{\lambda\rightarrow0}\lim\limits_{ t\rightarrow0}\psi_{(\lambda t,\tilde t)}\circ \psi_{(\lambda,t)}((E,\bar\partial_E),\theta)\\
  =& \lim\limits_{\tilde t\rightarrow0}\lim\limits_{\lambda\rightarrow0}\lim\limits_{ t\rightarrow0}\psi_{(\lambda,\tilde t t)}((E,\bar\partial_E),\theta)\\
  =& \lim\limits_{\tilde t\rightarrow0}((E,\bar\partial^\prime_E),\theta^\prime)=((E,\bar\partial^\prime_E),\theta^\prime),
\end{align*}
on the other hand, we have
\begin{align*}
 \lim\limits_{\tilde t\rightarrow0}\lim\limits_{\lambda\rightarrow0}\lim\limits_{ t\rightarrow0}\psi_{(\lambda t,\tilde t)}((E,\bar\partial^\prime_E),\theta^\prime)
 &= \lim\limits_{\tilde t\rightarrow0}\psi_{(0,\tilde t)}((E,\bar\partial^\prime_E),\theta^\prime)\\
& =\lim\limits_{\tilde t\rightarrow0}((E,\bar\partial^\prime_E),\tilde t\theta^\prime).
\end{align*}
Comparing these two results, we find that $((E,\bar\partial'_E),\theta')$ has to be a complex variation of Hodge structure.
\end{proof}

\begin{theorem}\label{nb}
Let $X$ be  a  Riemann surface.
\begin{enumerate}
\item If $((E,\bar\partial_E),\theta)\in \mathbb{M}_{\mathrm{Dol}}(X,r)$ is a complex variation of Hodge structure or a decoupled Higgs bundle, then the above  limits exist and coincide.
\item Let $((E,\bar\partial_E),\theta)\in \mathbb{M}_{\mathrm{Dol}}(X,2)$ and  assume the maximal destabilizing subbundle of $(E,\bar\partial_E)$ is preserved by $\theta^\dagger_h$ for the pluri-harmonic metric $h$ on $((E,\bar\partial_E),\theta)$, then  the limit $ \psi_{\overline{(0,0)}}((E,\bar\partial_E),\theta)$ exists,  and it  coincides with the limit $ \psi_{\underline{(0,0)}}((E,\bar\partial_E),\theta)$.
\item Let  $((E,\bar\partial_E),\theta)\in M_{\mathrm{Dol}}(X,r)$, then the limit  $\lim\limits_{\lambda\rightarrow 0}\psi_{(\lambda,0)}((E,\bar\partial_E),\lambda\theta)$ exists, and it  coincides with the limit $ \psi_{\underline{(0,0)}}((E,\bar\partial_E),\theta)$.
\end{enumerate}

\end{theorem}

\begin{proof}
(1) i) Let $((E,\bar\partial_E),\theta)\in \mathbb{M}_{\mathrm{Dol}}(X,r)$  be a complex variation of Hodge structure.
Since it is a fixed point of $(\lambda,t)$-action for any $(\lambda,t)\in \mathbb{C}\times \mathbb{C}^*$ by Theorem \ref{cv}, we have
$$ 
\psi_{\underline{(0,0)}}((E,\bar\partial_E),\theta)= \psi^{\underline{(0,0)}}((E,\bar\partial_E),\theta)=\psi^{\overline{(0,0)}}((E,\bar\partial_E),\theta)=\psi_{(0,0)}((E,\bar\partial_E),\theta)=((E,\bar\partial_E),\theta).
$$
Hence we only need to show $\psi_{\overline{(0,0)}}((E,\bar\partial_E),\theta)=((E,\bar\partial_E),\theta)$.  For $\lambda\neq 0$, we write  $(E,\bar\partial_E)=\bigoplus_{i=1}^k(E_i, \bar\partial_{E_i}),\theta=\bigoplus_{i=1}^{k-1}\theta_i$ for  $\theta_i:E_i\rightarrow E_{i+1}\otimes K_X$,
then by  virtue of the pluri-harmonic metric $h$ on  $((E,\bar\partial_E),\theta))$, we have  a holomorphic flat connection
$$
\nabla= \begin{pmatrix} \partial_{E_1,h}& & & \\ \lambda^{-1}\theta_1& \partial_{E_2,h} & &   \\   & \ddots &\ddots &\\ & & \lambda^{-1}\theta_{k-1}& \partial_{E_k,h}\\ \end{pmatrix}
$$
 with respect to the holomorphic structure
 $$\bar \partial_E^\prime=\begin{pmatrix} \bar\partial_{E_1}&\lambda(\theta_1)^\dagger_{h} & & \\   & \ddots &\ddots &\\ & & \bar\partial_{E_{k-1}}& \lambda(\theta_{k-1})^\dagger_h\\ & & & \bar\partial_{E_{k}}\\ \end{pmatrix}.$$
There is  a Simpson filtration $\{F^\bullet\}$ on $((E,\bar \partial_E^\prime),\nabla)$ given by
 $\{F^p=\bigoplus_{i=1}^{k-p} E_i\}_{0\leq p\leq k-1}$ since one easily checks that
 $$\nabla F^p\subset F^{p-1}\otimes K_X, \ \ \bar \partial_E^\prime F^p=0.$$
 It follows that
 $\psi_{(\lambda,0)}((E,\bar\partial_E),\theta)=((E,\bar\partial_E),\lambda^{-1}\theta))$
from Simpson's theorem.
Therefore, $\psi_{\overline{(0,0)}}((E,\bar\partial_E),\theta)=\lim\limits_{\lambda\rightarrow 0}((E,\bar\partial_E),\lambda^{-1}\theta)=((E,\bar\partial_E),\theta)$.

ii) Let $((E,\bar\partial_E),\theta))\in \mathbb{M}_{\mathrm{Dol}}(X,r)$  be a decoupled Higgs bundle with  decoupling metric $h$. We can assume $\theta$ is nonzero. We have seen that $ \psi_{\underline{(0,0)}}((E,\bar\partial_E),\theta)= (E,\bar\partial_E)$,
meanwhile we can  also calculate the limits
    $$ \psi_{\overline{(0,0)}}((E,\bar\partial_E),\theta)=\lim\limits_{\lambda\rightarrow 0}(E,\bar\partial_E+\lambda\theta^\dagger_h)=(E,\bar\partial_E),$$
 and
\begin{align*}
  &\lim\limits_{(\lambda,t)\rightarrow (0,0)}\bigg((E,\bar\partial_E+\frac{\lambda(1-|t|^2)}{1+|t\lambda|^2}\theta^\dagger_h),\frac{t(1+|\lambda|^2)}{1+|t\lambda|^2}\theta\bigg)\\
 =& \lim\limits_{\lambda\rightarrow 0}\lim\limits_{t\rightarrow 0}\bigg((E,\bar\partial_E+\frac{\lambda(1-|t|^2)}{1+|t\lambda|^2}\theta^\dagger_h),\frac{t(1+|\lambda|^2)}{1+|t\lambda|^2}\theta\bigg)=\lim\limits_{\lambda\rightarrow 0}(E,\bar\partial_E+\lambda\theta^\dagger_h)\\
 =&\lim\limits_{t\rightarrow 0}\lim\limits_{\lambda\rightarrow 0}\bigg((E,\bar\partial_E+\frac{\lambda(1-|t|^2)}{1+|t\lambda|^2}\theta^\dagger_h),\frac{t(1+|\lambda|^2)}{1+|t\lambda|^2}\theta\bigg)=\lim\limits_{t\rightarrow 0}((E,\bar\partial_E),t\theta)\\
 =&(E,\bar\partial_E).
\end{align*}
Consequently,  $\psi^{\underline{(0,0)}}((E,\bar\partial_E),\theta)=\psi^{\overline{(0,0)}}((E,\bar\partial_E),\theta)=\psi_{{(0,0)}}((E,\bar\partial_E),\theta)=(E,\bar\partial_E)$.

(2) Consider a family of flat bundles $((E,\bar\partial_E+\lambda\theta^\dagger_h),\partial_{E,h}+\lambda^{-1}\theta)$. It is divided into two cases.

 i)
 Assume $(E,\bar\partial_E+\lambda\theta^\dagger_h)$  are  non-semistable over some small deleted neighborhood $U$ of $\lambda=0$. Let $L$ be the maximal destabilizing subbundle of $(E,\bar\partial_E)$, and  $L^\bot$ be the orthogonal complement of $L$ in $E$ with respect to the pluri-harmonic metric $h$, namely there are $C^\infty$-decompositions $E\simeq L\oplus L^\bot\simeq L\oplus E/L$.  With respect to the above decomposition, we write
$$
\bar\partial_E=\left(
                   \begin{array}{cc}
                     \bar\partial_1 & \alpha \\
                     0 &  \bar\partial_2 \\
                   \end{array}
                 \right), \ \ \theta=\left(
                   \begin{array}{cc}
                     \theta_1 & 0 \\
                     \beta &  \theta_2 \\
                   \end{array}
                 \right),
$$ 
where $\beta$ must be non-zero and satisfies $\widetilde{\bar\partial} _2\beta=0$. By assumption $L$ is preserved by $\theta^\dagger_h$.
Since the Simpson filtration exactly coincides with the Harder--Narasimhan filtration for the case of rank  $r=2$, we get
$$
\psi_{\overline{(0,0)}}((E,\bar\partial_E),\theta)=\lim\limits_{\lambda\rightarrow0}\bigg((E,\left(
                   \begin{array}{cc}
                     \bar\partial_1+\lambda\bar\theta_1 & 0 \\
                     0 &  \bar\partial_2+\lambda\bar\theta_2 \\
                   \end{array}
                 \right)), \left(
                   \begin{array}{cc}
                     0 & 0 \\
                     \bar\alpha+\lambda^{-1}\beta &  0 \\
                   \end{array}
                 \right)\bigg).$$
                 Choosing a $C^\infty$-automorphism $\mathfrak{g}=\left(
                                                                    \begin{array}{cc}
                                                                      1 & 0 \\
                                                                      0 & \lambda \\
                                                                    \end{array}
                                                                  \right)\in\Aut(E)
                 $, from the  identities \begin{align*}
                                   \mathfrak{g}\circ\left(
                   \begin{array}{cc}
                     \bar\partial_1+\lambda\bar\theta_1 & 0 \\
                     0 &  \bar\partial_2+\lambda\bar\theta_2 \\
                   \end{array}
                 \right)\circ\mathfrak{g}^{-1}&=\left(
                   \begin{array}{cc}
                     \bar\partial_1+\lambda\bar\theta_1 & 0 \\
                    0  &  \bar\partial_2+\lambda\bar\theta_2 \\
                   \end{array}
                 \right),\\
                  \mathfrak{g}\circ\left(
                   \begin{array}{cc}
                     0 & 0 \\
                     \bar\alpha+\lambda^{-1}\beta &  0 \\
                   \end{array}
                 \right)\circ\mathfrak{g}^{-1}&=\left(
                   \begin{array}{cc}
                     0 & 0 \\
                     \lambda\bar\alpha+\beta &  0 \\
                   \end{array}
                 \right)
                 \end{align*}
  it follows that    \begin{align*}
                     \psi_{\overline{(0,0)}}((E,\bar\partial_E),\theta)  &=((E,\left(
                   \begin{array}{cc}
                     \bar\partial_1 & 0 \\
                    0 &  \bar\partial_2 \\
                   \end{array}
                 \right), \left(
                   \begin{array}{cc}
                     0 & 0 \\
                     \beta &  0 \\
                   \end{array}
                 \right))\\
              &=   \lim\limits_{t\rightarrow0}((E,\bar \partial_E),t\theta),
                     \end{align*}
        thus $\psi_{\underline{(0,0)}}((E,\bar\partial_E),\theta)=\psi_{\overline{(0,0)}}((E,\bar\partial_E),\theta)$.

        ii) Assume  $(E,\bar\partial_E+\lambda\theta^\dagger_h)$  are  semistable over some small deleted neighborhood $U$ of $\lambda=0$. Then $(E,\bar\partial_E)$ is also a semistable bundle. Otherwise, by our assumption,   the maximal destabilizing subbundle $L$ of $(E,\bar\partial_E)$ is also that of $(E,\bar\partial_E+\lambda\theta^\dagger_h)$, which contradicts the semistability of $(E,\bar\partial_E+\lambda\theta^\dagger_h)$.  Therefore, we have
        $$ \psi_{\overline{(0,0)}}((E,\bar\partial_E),\theta)=\lim\limits_{\lambda\rightarrow0}(E,\bar\partial_E+\lambda\theta^\dagger_h) =(E,\bar\partial_E)=\psi_{\underline{(0,0)}}((E,\bar\partial_E),\theta).$$

        (3) It follows from the calculation of so-called conformal limit in \cite{Gai,DFKMMN,CW}. Indeed, the limit $\lim\limits_{c\rightarrow0}((E,\bar\partial_{E}+|c|^2\theta^\dagger_{h_c}),\partial_{h_c}+\theta)=((E,\bar\partial_E^\prime),\nabla^\prime)$ exists as a flat bundle, where $h_c$ is a pluriharmobic metric on the Higgs bundle $((E,\bar\partial_E),c\theta)$, and it  satisfies $\lim\limits_{t\rightarrow0}((E,\bar\partial_E^\prime),t\nabla^\prime)=\lim\limits_{c\rightarrow0}((E,\bar\partial_E),c\theta)$.
\end{proof}

\end{document}